%submitted to Pedro Marn Rubio [pmr@us.es] on 1/4/19
% revised version submitted to Pedro on 7/22/19

\documentclass[reqno,10pt, centertags,draft]{amsart}
\usepackage{amsmath,amsthm,amscd,amssymb,latexsym,upref}
\usepackage{graphicx,tikz}
\usepackage{latexsym}
\usepackage{lscape}
\usepackage{url}
%\usepackage{fleqn}

%\usepackage{showkeys}
%%%%%%%%%%%%%%%%%%%%%%%%%%%%%%%%%%
%HERE you TURN ON/OFF the tags for eqs., refs., etc.%
%\usepackage{showkeys}
%%%%%%%%%%%%%%%%%%%%%%%%%%%%%%%%%%%

\newcommand{\beq}{\begin{equation}}
\newcommand{\enq}{\end{equation}}

\makeatletter
\def\theequation{\@arabic\c@equation}

\newcommand{\bbR}{{\mathbb{R}}}

\newcommand{\bbZ}{{\mathbb{Z}}}
\newcommand{\bbC}{{\mathbb{C}}}

\newcommand{\bbT}{{\mathbb{T}}}

\newcommand{\cQ}{{\mathcal Q}}

\newcommand{\lb}{\label}

\newcommand{\bv}{{\mathbf v}}
\newcommand{\bk}{{\mathbf k}}

\newcommand{\bn}{{\mathbf n}}

\newcommand{\bx}{{\mathbf x}}

\newcommand{\bq}{{\mathbf q}}
\newcommand{\bp}{{\mathbf p}}
\newcommand{\bl}{{\mathbf l}}

%%%%%%%%%%%%% GREEK LETTERS%%%%%%%%%%%

\renewcommand{\o}{\omega}

\renewcommand{\o}{\omega}

\newcommand{\spec}{\operatorname{Spec }}

\newcommand{\bi}{\bibitem}

\renewcommand{\ln}{\text{\rm ln}}

\allowdisplaybreaks
\numberwithin{equation}{section}

\renewcommand{\det}{\operatorname{det}}

\newcommand{\curl}{\operatorname{curl}}

\renewcommand{\Re}{\operatorname{Re }}

\newcommand{\diag}{\operatorname{diag}}

\theoremstyle{plain}
\newtheorem{theorem}{Theorem}[section]

\newtheorem{lemma}[theorem]{Lemma}

\newtheorem{proposition}[theorem]{Proposition}

\theoremstyle{definition}

\newtheorem{property}[theorem]{Property}

\newtheorem{example}[theorem]{Example}

\newtheorem{remark}[theorem]{Remark}

\begin{document}
\allowdisplaybreaks

\title[Instability of unidirectional flows for 2D $\alpha$-Euler]{Instability of unidirectional flows for the 2D $\alpha$-Euler equations}
\thanks{Partially supported by  NSF grant DMS-171098, Research Council of the University of Missouri and the Simons Foundation.}
\author[H. Dullin]{Holger Dullin}
\address{School of Mathematics and Statistics,
University of Sydney NSW 2006, Australia}
\email{holger.dullin@sydney.edu.au}
\author[Y. Latushkin]{Yuri Latushkin}
\address{Department of Mathematics,
University of Missouri, Columbia, MO 65211, USA}
\email{latushkiny@missouri.edu}
%\urladdr{http://www.math.missouri.edu/personnel/faculty/latushkiny.html}
\author[R. Marangell]{Robert Marangell}
\address{School of Mathematics and Statistics,
University of Sydney NSW 2006, Australia}
\email{robert.marangell@sydney.edu.au}
\author[S. Vasudevan]{Shibi Vasudevan}
\address{International Centre for Theoretical Sciences,
Tata Institute of Fundamental Research, Bengaluru, 560089, India}
\email{shibi.vasudevan@icts.res.in}
\author[J. Worthington]{Joachim Worthington}
\address{School of Mathematics and Statistics,
University of Sydney NSW 2006, Australia}
\address{Cancer Research Division, Cancer Council NSW, Woolloomooloo, NSW 2011, Australia}
\email{joachim.worthington@nswcc.org.au}
%\email{joachimw@maths.usyd.edu.au}
\date{\today}

%\date{, 2003.}
%\keywords{Schr\"odinger equation, Hamiltonian systems, Jost function, traveling waves, eigenvalues}
\keywords{2D $\alpha$-Euler equations, instability, continued fractions, essential spectrum, unidirectional flows}

\dedicatory{Dedicated to Prof.\ Tom\'as Caraballo on the occasion of his 60-th birthday}

%%%%%%%%%%%%%%%%%%%%%%%%%%%%%%%%%%%
\begin{abstract}
We study stability of unidirectional flows for the linearized 2D $\alpha$-Euler equations on the torus. The unidirectional flows are steady states whose vorticity is given by Fourier modes corresponding to a vector $\mathbf p \in \mathbb Z^{2}$. We linearize the $\alpha$-Euler equation and write the linearized operator $L_{B} $ in $\ell^{2}(\mathbb Z^{2})$ as a direct sum of one-dimensional difference operators $L_{B,\mathbf q}$ in $\ell^{2}(\mathbb Z)$ parametrized by some vectors $\mathbf q\in\mathbb Z^2$ such that the set $\{\mathbf q +n \mathbf p:n \in \mathbb Z\}$ covers the entire grid $\mathbb Z^{2}$. The set $\{\mathbf q +n \mathbf p:n \in \mathbb Z\}$ can have zero, one, or two points inside the disk of radius $\|\mathbf p\|$. We consider the case where the set $\{\mathbf q +n \mathbf p:n \in \mathbb Z\}$ has exactly one point in the open disc of radius $\mathbf p$. We show that unidirectional flows that satisfy this condition are linearly unstable. Our main result is an instability theorem that provides a necessary and sufficient condition for the existence of a positive eigenvalue to the operator $L_{B,\bq}$ in terms of equations involving certain continued fractions. Moreover, we are also able to provide a complete characterization of the corresponding eigenvector.  The proof is based on the use of continued fractions techniques expanding upon the ideas of Friedlander and Howard.
\end{abstract}
%%%%%%%%%%%%%%%%%%%%%%%%%%%%%%%%%%%

\maketitle
%{\scriptsize{\tableofcontents}}
\normalsize

%%%%%%%%%%%%%%%%%%%%%%%%%%%%%%%%%%%%
\section{Introduction and basic setup}
\subsection{Introduction} The study of eigenvalues of the differential operators obtained by linearizing the Euler and Navier Stokes equations about a steady state using the methods and techniques of continued fractions was initiated by Meshalkin and Sinai in the 1960s in their paper \cite{MS}, and since then has been pursued by many authors, for example \cite{BFY99, FH98, FSV97}. We caution the reader that this is a non exhaustive sample of the literature. See \cite{BW, BN10, WDM1, WDM2, FVY00, L} for related work on the stability of steady state solutions to the Euler equations.

In this paper we continue the work in this direction, and study stability of a special steady state, the \emph{unidirectional flow}, of the 2D $\alpha$-Euler equations on the torus written for the Fourier coefficients of vorticity. 
The $\alpha$-Euler equations are an inviscid regularization of the classical Euler equations. They were introduced and studied in a series of foundational papers by C.\ Foias, D.\ Holm, J.\ Marsden, T.\ Ratiu, E.\ Titi and others; see \cite{FHT01}, \cite{HMR98}, \cite{HMR98a} and references therein. 
The unidirectional steady state has exactly two nonzero Fourier mode corresponding to a twodimensional vector $\mathbf p \in \mathbb Z^{2}$ with integer components and its negative $-\mathbf p$.
%\footnote{JW: I have tried to make this more precise but at the cost of readability}. 
We linearize the $\alpha$-Euler equation and write the linearized operator $L_{B} $ in $\ell^{2}(\mathbb Z^{2})$ as a direct sum of one-dimensional difference operators $L_{B,\mathbf q}$ in $\ell^{2}(\mathbb Z)$ parametrized by some vectors $\mathbf q$ such that the set $\{\mathbf q +n \mathbf p:n \in \mathbb Z\}$ covers the entire grid $\mathbb Z^{2}$, see \cite{WDM1, L,LLS}. 
The set $\{\mathbf q +n \mathbf p:n \in \mathbb Z\}$ can have zero, one or two points inside the disk with radius $\|\mathbf p\|$ centred at the origin. We primarily consider the second case, and apply continued fractions to the study of spectral properties of the respective difference operator $L_{B,\mathbf q}$, cf. \cite{FH98,L,MS}. We show the existence of a positive eigenvalue for $L_{B,\mathbf q}$ in this case, which implies that $L_{B}$ has unstable spectrum. Therefore, the unidirectional steady states that have one point inside the disk of radius $\|\mathbf p\|$ are linearly unstable.
Our main result  is an instability theorem that provides a necessary and sufficient condition for the existence of a positive eigenvalue to the operator $L_{B,\bq}$ in terms of equations involving certain continued fractions. Moreover, we are also able to provide a list of additional properties of the corresponding eigenvectors.  

More details and a precise formulation are given in Theorem \ref{instability} in Section 2. Section 3 contains some results on continued fractions that are used in the proofs of the instability theorem in Section 2. In Section 4, following the ideas presented in \cite{LLS}, we characterize the essential spectrum of the linearized operator $L_{B}$ and prove the spectral mapping theorem for the group generated by $L_{B}$.

\subsection{Basic setup and governing equations}  \lb{s1}
We consider two dimensional $\alpha$-Euler equations for incompressible ideal fluid on the torus written in vorticity form,
\beq\lb{vortalpha}
\frac{\partial\omega}{\partial t}+\bv\cdot\nabla\omega=0, \quad \nabla \cdot \bv= 0, \mathbf x \in\bbT^{2},
\enq
where $\omega$ is the vorticity of the fluid and $\bv$ the smoothed velocity, $ \bv=(v_1,v_2), \bx=(x,y)\in\bbT^2=\bbR^2/2\pi\bbZ^2$.
Here 
\beq\lb{ovrel}
\omega=\curl(1-\alpha^{2}\Delta)\bv,
\enq
where $\alpha > 0$ is a positive real number.
Since $\nabla \cdot \bv =0$, there exists a stream function $\phi$, such that $\bv=-\nabla^{\perp}\phi$, where $\nabla^{\perp}=(-\partial_{y},\partial_{x})$. This means that 
\begin{equation}\lb{ophirel}
\omega= -\Delta(1-\alpha^{2}\Delta)\phi.
\end{equation}
%Assume that 
%\beq\lb{vortassump}
%\int_{\bbT^{2}} \omega dx dy =0.
%\enq
Assuming $\int_{\bbT^{2}} \omega dx dy =0$
%The fact that \eqref{vortassump} holds implies that 
allows one to solve \eqref{ophirel} for the stream function $\phi$, and in addition, by imposing the condition $\int_{\bbT^{2}} \phi dx dy =0$ one obtains a unique solution. 
%that the zero Fourier coefficient of $\phi$ satisfies $\phi_{\bf0}=0$, i.e., \[\int_{\bbT^{2}} \phi dx dy =0.\] 
%$=\frac{\partial u_2}{\partial x_1}-\frac{\partial u_1}{\partial x_2}$ is the vorticity that with no loss of generality is assumed to have zero average. 
Using the Fourier series \[\omega(\bx)=\sum_{\bk\in\bbZ^2\setminus\{0\}}\omega_\bk e^{i \bk\cdot\bx}, \quad \phi(\bx)=\sum_{\bk\in\bbZ^2\setminus\{0\}}\phi_\bk e^{i \bk\cdot\bx},\] and equation \eqref{ophirel}, one obtains the following relationship among the Fourier modes of $\omega$ and $\phi$,
\beq\lb{ophif}
\phi_{\bk}= ||\bk||^{-2}(1+\alpha^{2}||\bk||^{2})^{-1}\omega_{\bk}
\enq
for every $\bk \neq 0$. Here $||\cdot||$ denotes the standard Euclidean norm in $\mathbb R^{2}$. 
%\section{Basic Setup and Governing Equations}
Using the Fourier series expansion one can re-write the first equation in \eqref{vortalpha} for each Fourier mode $\omega_{\bk}$ of $\omega$ as 
\beq\lb{DEEalpha}
\frac{d\omega_\bk}{dt}=\sum_{\bq\in\bbZ^2\setminus\{0\}}\beta(\bk-\bq,\bq)\omega_{\bk-\bq}\omega_\bq,\,\,\bk\in\bbZ^2\setminus\{0\},
\enq
where the coefficients $\beta(\bp,\bq)$ for $\bp,\bq\in\bbZ^2$ are defined as 
\beq\lb{dfnalpha}
\beta(\bp,\bq)=\frac12\bigg(\|\bq\|^{-2}(1+\alpha^{2}\|\bq\|^{2})^{-1}-\|\bp\|^{-2}(1+\alpha^{2}\|\bp\|^{2})^{-1}\bigg)(\bp\wedge\bq)\,
\enq
for %\footnote{JW: removed case $ \bp\neq\pm\bq,$ here; not needed, already zero?} 
$\bp\neq0,\bq\neq0$, and $\beta(\bp,\bq)=0$ otherwise. Here 
\begin{equation}\label{dfnwedge}
\bp\wedge\bq=\det\left[\begin{smallmatrix}p_1&q_1\\ p_2&q_2\end{smallmatrix}\right] \mbox{ for } \bp=(p_1,p_2) \mbox{ and } \bq=(q_1,q_2).
\end{equation}
The derivation of \eqref{DEEalpha} is given in the Appendix. We refer to \cite{L} for equation \eqref{DEEalpha} in the Euler case when $\alpha=0$. 

The choice of spaces for the sequences $(\omega_\bk)_{\bk\in\bbZ^2}$ depends on the choice of vorticity in \eqref{vortalpha}. For instance, if $\omega\in H^s(\bbT^2)$, the Sobolev space, then $(\omega_\bk)\in \ell_s^2(\bbZ^2)$, the space of sequences square summable with the weight $(1+\|\bk\|^{2s})^{1/2}$. 
In what follows we will mainly consider the case $s=0$, that is, $\omega \in L^{2}(\mathbb T^{2})$ and $(\omega_{\bk}) \in \ell^{2}(\mathbb Z^{2})$ as the case $s > 0$ is analogous.

\subsection{Unidirectional flows}
A {\em unidirectional flow} is the flow induced by a time independent solution $\omega^0$ of \eqref{vortalpha} that has only one nonzero Fourier mode, that is, 
\begin{equation}\lb{dfnBS}
\omega^0(\bx)=\Re(\Gamma e^{i\bp\cdot\bx}) \text{ for a given } \bp\in\bbZ^2\setminus\{0\} \text{ and } \Gamma\in \bbC,
\end{equation}
i.e., the Fourier coefficients $\omega^{0}(\bx)$ are given by
\begin{equation}\label{steadystuni}
\omega^{0}_{\bk}=
\left\{
	\begin{array}{ll}
		\Gamma / 2& \mbox{if } \bk = \bp, \\
         \overline{\Gamma}/2 & \mbox{if } \bk = -\bp, \\	
         0 & \mbox{if } \bk \neq \pm \bp,
    \end{array}
\right.
\end{equation}
where $\bar{\Gamma}$ is the complex conjugate of $\Gamma$.

 A well-known example of  the unidirectional flow is given by the Kolmogorov  flow with vorticity $\omega^0(\bx)=\cos(mx_1)$, $m=1,2,\dots,$ (see, e.g., \cite{MS}); this corresponds to the choice $\bp=(m,0)$ and $\Gamma=1$. In the case when $m=1$ the steady state solution of the Euler equation is called in \cite{BW} a bar-state. Unidirectional flows by definition are special cases of shear flows. A shear flow has a general Fourier series but still only a flow in one direction.
 
 The unidirectional flows have been studied by many authors, see e.g. \cite{BW,WDM1,WDM2,L,LLS} and the literature therein. We demonstrate that the unidirectional flow is indeed a steady state of \eqref{DEEalpha} in Lemma \ref{steadyst} in the Appendix.

We use notation $L_B$, where $B$ stands for the ``bar state'', for the linearization of \eqref{DEEalpha} about the steady state  \eqref{dfnBS}, that is, we
linearize \eqref{DEEalpha} about the unidirectional flow and consider in $\ell^2(\bbZ^2)$ the following operator,
\begin{align}
L_B&: (\omega_\bk)_{\bk\in\bbZ^2}\mapsto
\big(\beta(\bp,\bk-\bp)\Gamma\omega_{\bk-\bp}-
\beta(\bp,\bk+\bp)\bar{\Gamma}\omega_{\bk+\bp}\big)_{\bk\in\bbZ^2}\lb{dfnLB}
\end{align}
(see the Appendix for derivation of formula \eqref{dfnLB}).
 
Our objective is to show that the spectrum of the operator $L_B$  contains an unstable eigenvalue (i.e., an eigenvalue that has a positive real part) provided $\|\bp\|$ is sufficiently large.

\subsection{Remarks} We remark that our results also pertain to the 2D Euler case by formally putting $\alpha=0$ in the $\alpha$-Euler setting. Although this paper is written for the $\alpha$-Euler equations, all the ideas, techniques and results of this current paper will carry over to the $\alpha=0$ Euler case. One can thus claim instability of unidirectional steady states for the Euler equations using the same techniques of the current paper. In other words, our results hold for every $\alpha \geq 0$. We present the results for the $\alpha$-Euler model because, despite being used in diverse areas such as turbulence modeling (see \cite{CFHOTW98, CS04}) and data assimilation (see \cite{ANT16}), very little seems to be known about the stability properties of this model. The velocity $\mathbf v$ and the vorticity $\omega$ are related via the following Biot-Savart law $\mathbf v = \nabla^{\perp}\Delta^{-1}(I-\alpha^{2}\Delta)^{-1} \omega$. Notice that the velocity is more regular in this case compared the the Euler ($\alpha=0$) case. Similar ideas involving continued fractions have been used by S.\ Friedlander and R.\ Shvydkoy, see \cite{FS05} , to characterize the unstable point spectrum of the quasi geostrophic equation which is much more singular than the present model in the sense that the Biot-Savart law relating a scalar quantity $\theta$ and the velocity $\mathbf v$ is given by $\mathbf v = \nabla^{\perp}\Delta^{-1/2} \theta$. The paper \cite{FS05} also characterizes the unstable essential spectrum of the surface quasi geostrophic equations. Furthermore, R.\ Shvydkoy, in paper \cite{S06}, has provided a characterization of the essential spectrum of a wide class of linear advective equations, examples which include, see Section 3.3 in \cite{S06}, the 2D Euler equations with and without the Coriolis rotation term, the $\alpha$-Euler equations, the surface quasi-geostrophic equations, the Boussinesq equations and the kinematic dynamo.

\section{Instability of the unidirectional flows}

In this section we first review some results regarding the operator $L_B$ defined in \eqref{dfnLB}.
We use the approach taken in \cite{WDM1,WDM2,L,LLS}. Next, we show the existence of a positive eigenvalue of $L_{B}$. Our main result is Theorem \ref{instability} proved below.

\subsection{Decomposition of subspaces and operators}\label{B-S} In this subsection we follow \cite{WDM1, L,LLS} and explain how to decompose the operator $L_B$
acting in $\ell^2(\bbZ^2)$ into the direct sum of operators $L_{B,\bq}$, $\bq\in\cQ\subset\bbZ^2$, acting in the space $\ell^2(\bbZ)$, for some set $\cQ\subset\bbZ^2$.

Let $\bp\in\bbZ^2$ be the fixed vector from \eqref{dfnBS}.
Our first objective is to construct the set $\cQ$ such that the translated vectors of the form $\bq+n\bp$, with $n\in\bbZ$ and $\bq\in\cQ$, cover the entire grid $\bbZ^2$ in a way that for different $\bq$ and $\bq'$ from $\cQ$ the sets of the translated vectors, formed by all $n\in\bbZ$, are disjoint.
To begin the construction, 
for any $\bq\in\bbZ^2$ we denote $\Sigma_{B,\bq}=\{\bq+n\bp: n\in\bbZ\}$ and note that the line $\{\bq+t\bp: t\in\bbR\}$ may contain several different sets $\Sigma_{B,\bq'}$. For a given $\bq$, we let $\tau=\tau(\bq)$ % 
temporarily denote the radius of the smallest circle centered at zero that has a nonempty intersection with the set $\Sigma_{B,\bq}$. The intersection consists of either one point (which we will denote by $\widehat{\bq}$) or two points (in this case we denote by $\widehat{\bq}$ one of them).
In other words, for each $\bq\in\bbZ^2$ we
identify the unique vector $\widehat{\bq}=\widehat{\bq}(\bq)$ in $\Sigma_{B,\bq}$ such that the following holds:
\begin{align*}
\|\widehat{\bq}\|&=\min\{\|\bq+n\bp\|: n\in\bbZ\}\text{ and }\\\widehat{\bq}&=\bq+n_{\max}\bp,\text{ where $n_{\max}=\max\{n:
 \|\bq+n\bp\|=\min\{\|\bq+n\bp\|: n\in\bbZ\}\}$.}
 \end{align*}
 The second condition simply fixes one of the possibly two points in $\Sigma_{B,\bq}$ that belong to the circle of radius $\tau=\|\widehat{\bq}\|$. We let $\cQ=\{\widehat{\bq}(\bq): \bq\in\bbZ^2\}$.

We will now decompose the operator $L_{B}$ in $\ell^2(\bbZ^2)$ into a direct sum of operators acting on the spaces isomorphic to $\ell^2(\bbZ)$. Indeed, for each $\bq\in\cQ$ we denote by $X_{B,\bq}$ the subspace of $\ell^2(\bbZ^2)$ of sequences supported in $\Sigma_{B,\bq}$, that is, we let $X_{B,\bq}=\{(\omega_\bk)_{\bk\in\bbZ^2}: \omega_\bk=0 \text{ for all $\bk\notin\Sigma_{B,\bq}\}$}$. Clearly, $\ell^2(\bbZ^2)=\oplus_{\bq\in\cQ}X_{B,\bq}$, the operator $L_B$ leaves $X_{B,\bq}$ invariant,  and therefore $L_B=\oplus_{\bq\in\cQ}L_{B,\bq}$ where $L_{B,\bq}$ is the restriction of $L_B$ onto $X_{B,\bq}$. To emphasise that $L_B$ depends on $\bp$ from \eqref{dfnBS}, we sometimes write $L_B(\bp)$ and $L_{B,\bq}(\bp)$.
For $\bk=\bq+n\bp\in\Sigma_{B,\bq}$ we denote $w_n=\omega_{\bq+n\bp}$, $n\in\bbZ$, and remark that the map $(\omega_\bk)_{\bk\in\bbZ^2}\mapsto(w_n)_{n\in\bbZ}$ is an isomorphism of $X_{B,\bq}$ onto $\ell^2(\bbZ)$. Under this isomorphism the operator $L_{B,\bq}$ in $X_{B,\bq}$ induces an operator in $\ell^2(\bbZ)$ (that we will still denote by $L_{B,\bq}$) given by the formula
\beq
L_{B,\bq}: (w_n)_{n\in\bbZ}\mapsto
\big(\beta(\bp,\bq+(n-1)\bp)\Gamma w_{n-1}-
\beta(\bp,\bq+(n+1)\bp)\bar{\Gamma} w_{n+1}\big)_{n\in\bbZ}.\lb{dfnLB1}
\enq
By \eqref{dfnalpha}, if $\bq$ is parallel to $\bp$ then $L_{B,\bq}(\bp)=0$; therefore, in what follows we will always assume that $\bq$ and $\bp$ are not parallel.

We recall that $H^{s}(\bbT^{2})$ is the Sobolev space of $2\pi$-periodic $L^{2}$ functions with $s$ derivatives in $L^{2}$. Via Fourier transform, $H^{s}(\bbT^{2})$ is isometrically isomorphic to $\ell_{s}^{2}(\bbZ^{2})$, the set of sequences $(\omega_\bk)_{\bk\in\bbZ^2}$ which are $\ell^{2}$ summable with the weight $(1+\|\bk\|^{2s})^{1/2}$. As above, we may decompose $\ell_{s}^{2}(\bbZ^{2})=\oplus_{\bq \in Q} X_{B,\bq,s}$, where $X_{B,\bq,s}$ is the space $\ell_{s}^{2}(\bbZ)$ with the weight $(1+\|\bq+n \bp\|^{2s})^{1/2}$. Since the results for $s=0$ and $s\neq0$ are analogous, in what follows we will consider only the space  $\ell^{2}(\bbZ)$.

Our objective is to study the spectrum of $L_{B,\bq}$ in $\ell^2(\bbZ)$.
From now on we assume that $\Gamma\in\bbR$. Then $L_{B,\bq}$ can be written as
$L_{B,\bq}=(S-S^*)\diag_{n\in\bbZ}\{\rho_n\}$,  where $S:(w_n)_{n\in\bbZ}\mapsto(w_{n-1})_{n\in\bbZ}$ is the shift operator in $\ell^2(\bbZ)$ and we introduce the notation
\begin{align}\lb{dfnrho}
\rho_n&=\Gamma\beta(\bp,\bq+n\bp)=\frac12\Gamma(\bq\wedge\bp)\nonumber \\  & \times \bigg(\frac{1}{\|\bp\|^{2}(1+\alpha^{2}\|\bp\|^{2})} -\frac{1}{\|\bq+n\bp\|^{2}(1+\alpha^{2}\|\bq+n\bp\|^{2})}\bigg), \, n\in\bbZ,
\end{align}
with $\mathbf q \wedge \mathbf p $ as defined in \eqref{dfnwedge}.
\begin{lemma}\label{symev}
The nonzero eigenvalues $\lambda$ of $L_{B,\bq}$ are symmetric about the coordinate axes, i.e., if $\lambda \neq 0$ is an eigenvalue, then $-\lambda, \overline{\lambda}, -\overline{\lambda}$ are also eigenvalues.
\end{lemma}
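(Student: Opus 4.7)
The plan is to realize each of the three new eigenvalues by an explicit, elementary transformation of a given eigenvector $w=(w_n)_{n\in\bbZ}\in\ell^2(\bbZ)$ of $L_{B,\bq}$ with eigenvalue $\lambda$. Using the factorization $L_{B,\bq}=(S-S^*)\diag\{\rho_n\}$, the eigenvalue equation reads componentwise
\[
\rho_{n-1}w_{n-1}-\rho_{n+1}w_{n+1}=\lambda w_n,\qquad n\in\bbZ,
\]
and, under the standing assumption $\Gamma\in\bbR$ together with formula \eqref{dfnrho}, every coefficient $\rho_n$ is real.

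For the sign-flip $\lambda\mapsto -\lambda$ I would set $u_n:=(-1)^nw_n$ and multiply the eigenvalue equation by $(-1)^n$. Since $(-1)^{n\pm 1}=-(-1)^n$, this yields $\rho_{n-1}u_{n-1}-\rho_{n+1}u_{n+1}=-\lambda u_n$, so $u\in\ell^2(\bbZ)$ (as $|u_n|=|w_n|$) is an eigenvector of $L_{B,\bq}$ with eigenvalue $-\lambda$.

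For the conjugation $\lambda\mapsto \bar\lambda$ I would take componentwise complex conjugates of the eigenvalue equation and use $\rho_n\in\bbR$ to obtain $\rho_{n-1}\overline{w_{n-1}}-\rho_{n+1}\overline{w_{n+1}}=\bar\lambda\,\overline{w_n}$, so $\overline{w}=(\overline{w_n})\in\ell^2(\bbZ)$ is an eigenvector with eigenvalue $\bar\lambda$. Composing the two transformations produces $\bigl((-1)^n\overline{w_n}\bigr)_{n\in\bbZ}$ as an eigenvector with eigenvalue $-\bar\lambda$, completing the four-fold symmetry.

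There is essentially no obstacle here: both transformations are isometries of $\ell^2(\bbZ)$, hence automatically send eigenvectors to eigenvectors, and the reality of the coefficients $\rho_n$ (which is the reason the hypothesis $\Gamma\in\bbR$ was imposed just before the lemma) is the only ingredient beyond the explicit form of $L_{B,\bq}$. The hypothesis $\lambda\neq 0$ plays no role in the argument; it is in the statement only because the assertion becomes vacuous for $\lambda=0$.
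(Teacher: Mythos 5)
Your proof is correct, and it uses the same two basic symmetries that the paper's Appendix proof does: multiplication by $(-1)^n$ (the paper packages this as conjugation by the operator $\hat J:(\omega_n)\mapsto((-1)^n\omega_n)$, which anticommutes with $S$ and $S^*$) and componentwise complex conjugation using $\rho_n\in\bbR$. The one place where your route genuinely differs is the reflection $\lambda\mapsto-\overline{\lambda}$: the paper obtains it at the operator level, from $(S-S^*)^*=-(S-S^*)$ together with $\sigma(L^*)=\overline{\sigma(L)}$ and the commutation identity $\sigma(AB)\setminus\{0\}=\sigma(BA)\setminus\{0\}$ (Lemma \ref{spectrumcommute}), whereas you simply compose the two explicit eigenvector transformations. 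Your version is more elementary — it needs no adjoints and no spectral commutation lemma, it stays inside $\ell^2(\bbZ)$ automatically because both transformations are isometries, and, as you note, it does not need to excise $\lambda=0$ (the paper's argument does, precisely because of the $\setminus\{0\}$ in Lemma \ref{spectrumcommute}). What the paper's operator-level argument buys in exchange is slightly more: it shows the symmetry $\sigma\mapsto-\overline{\sigma}$ for the entire nonzero spectrum of $L_{B,\bq}$, not only for the point spectrum, though only the eigenvalue statement is claimed in the lemma.
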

This is a result of the Hamiltonian structure of the $\alpha$-Euler equation. We refer to \cite[Prop.4, p.269]{LLS} and the Appendix for a proof.

Due to Lemma \ref{symev}, to prove spectral instability of the unidirectional flow we need to show the existence of at least one $\mathbf q \in \cQ$ such that $L_{B,\bq}$ has an eigenvalue with nonzero real part.
In turn, this is equivalent to showing that the spectrum $\spec(\frac1c L_{B,\bq})=\frac1c\spec(L_{B,\bq})$ of a multiple of $L_{B,\bq}$ has an eigenvalue with nonzero real part. Here, $c$ is any non-zero real constant that we choose. In particular,
dividing $L_{B,\bq}$ by the $n$-independent real multiple $c={\frac12\Gamma(\bq\wedge\bp)}{\|\bp\|^{-2}(1+\alpha^{2}\|\bp\|^{2})^{-1}}$, we pass to the operator $\frac1cL_{B,\bq}$ of the same structure as $L_{B,\bq}$ but with the term ${\frac12\Gamma(\bq\wedge\bp)}{\|\bp\|^{-2}(1+\alpha^{2}\|\bp\|^{2})^{-1}}$ in \eqref{dfnrho} replaced by $1$. In fact, this procedure is equivalent to rescaling $\Gamma$. In order to simplify notations we will assume in what follows that $\Gamma$  in \eqref{dfnrho} already satisfies the normalization condition \[{\frac12\Gamma(\bq\wedge\bp)}{\|\bp\|^{-2}(1+\alpha^{2}\|\bp\|^{2})^{-1}}=1.\] We introduce notation   
\begin{equation}\label{gammandef}
\gamma_{n}= - \frac{\|\bp\|^{2}(1+\alpha^{2}\|\bp\|^{2})}{\|\bq+n\bp\|^{2}(1+\alpha^{2}\|\bq+n\bp\|^{2})}.
\end{equation}
Using the normalization condition, we see that $\rho_{n}=1+\gamma_{n}$. Therefore, we want to study the spectrum of the operator 
\begin{equation}\label{lq}
L_{B,\bq}=(S-S^*)\diag_{n\in\bbZ}\{1+\gamma_n\}.
\end{equation}

\begin{remark}\label{classf}
We will now classify points $\bq\in\bbZ^2$ recalling notations $\bq$ and $\cQ$ introduced in the beginning of Subsection \ref{B-S}. For any $\bq\in\bbZ^2$ the intersection of the set $\Sigma_{B,\bq}=\{\bq+n\bp:n \in \mathbf Z\}$ with the open disc of radius $\|\bp\|$ may have either zero, one, or two points. %  
If this is the case then we call $\bq$ a point of type $0$, $I$ and $II$. 

If $\bq \in \bbZ^{2}$ is a point of type $I$ then the set $\Sigma_{B,\bq}=\{\bq+n\bp:n \in \bbZ\}$ contains exactly one vector $\hat{\bq}=\hat{\bq}(\bq)$ whose norm is stricly smaller than $\bp$. We further classify points of type $I$ as follows, see Figure 1 and Examples \ref{ex1}, \ref{ex2}, \ref{ex3}. We say that $\bq$ is of type $I_{0}$ if all other vectors in $\Sigma_{B,\bq}$ have norms strictly larger than $\|\bp\|$. This means that the only vector in $\Sigma_{B,\bq}$ whose norm does not exceed $\|\bp\|$ is located strictly inside the disk of radius $\|\bp\|$. 

There are two more possibilities for $\hat{\bq}(\bq) \in \Sigma_{B,\bq}$ to be strictly inside the disc of radius $\|\bp\|$. The first is when the preceeding point, $\hat{\bq}(\bq)-\bp$, belongs to the boundary of the disc and the second possibility is when the following point $\hat{\bq}(\bq)+\bp$ belongs to the boundary of the disc. These two cases are classified as type $I_{-}$ and $I_{+}$ respectively:
we say that $\bq$ is of type $I_{-}$ if $\|\hat{\bq}(\bq)\| < \|\bp\|$, $\|\hat{\bq}(\bq)-\bp\| = \|\bp\|$, and all other vectors in $\Sigma_{B,\bq}$ have norms strictly larger than $\|\bp\|$ and $\bq$ is of type $I_{+}$ if $\|\hat{\bq}(\bq)\| < \|\bp\|$, $\|\hat{\bq}(\bq)+\bp\| = \|\bp\|$, and all other vectors in $\Sigma_{B,\bq}$ have norms strictly larger than $\|\bp\|$.
\end{remark}
\begin{example}\label{ex1}
See Figure 1 and \cite{WDM1}. Let $\bp=(3,1)$. Then $\hat{\bq}=(-2,3)$ is of type $0$, $\hat{\bq}=(-1,2)$ is of type $I_{0}$, $\hat{\bq}=(0,-2)$ is of type $I_{+}$, $\hat{\bq}=(2,-2)$ is of type $I_{-}$ and $\hat{\bq}=(-1,1)$ is of type $II$.
\end{example}
\begin{example}\label{ex2}
Let $\bp=(1,2)$. Then $\hat{\bq}=(1,-1)$ is of type $I_{+}$, while $\hat{\bq}=(-1,1)$ is of type $I_{-}$ whereas $\hat{\bq}=(-1,0)$ is of type $II$. 
\end{example}
\begin{example}\label{ex3}
Let $\bp=(2,0)$. Then $\hat{\bq}=(0,1)$ is of type $I_{0}$. 
\end{example}

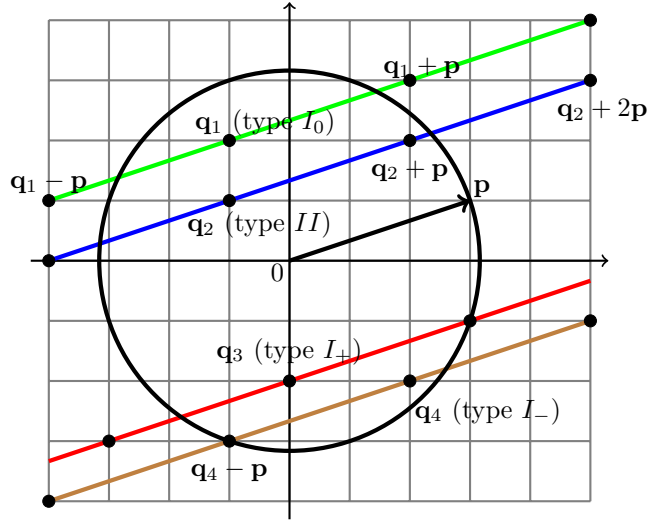
\begin{figure}\label{Classes of points}%[Classes of points]
  \begin{center}
    \begin{tikzpicture}[scale=.8]
    \draw[help lines, thick] (-4,-4) grid (5,4);
    \draw [->] [thick] (-4.3,0) -- (5.3,0);
     \draw [->] [thick] (0,-4.3) -- (0,4.3);
    \node at (-.2,-.2) {$0$};
     \draw [->] [ultra thick] (0,0) -- (3,1);
     \node at (3.2,1.2) {$\bf p$};
  %%%%%%%%%%%%%%%%%%%%%%%
     \draw [ultra thick, green] (-4,1) -- (5,4);
      \node at (-.4, 2.3) {${\bf q}_1$ (type $I_0$)};
      \node at (2.2, 3.2) {${\bf q}_1+{\bf p}$};
       \node at (-4, 1.3) {${\bf q}_1-{\bf p}$};
      \draw [fill] (-1,2) circle [radius=.1];
     \draw [fill] (2,3) circle [radius=.1];
      \draw [fill] (-4,1) circle [radius=.1];
       \draw [fill] (5,4) circle [radius=.1];
      %%%%%%%%%%%%%%%%%%%%%
      \draw [ultra thick, blue] (-4,0) -- (5,3);
       \draw [fill] (-1,1) circle [radius=.1];
     \draw [fill] (2,2) circle [radius=.1];
      \draw [fill] (-4,0) circle [radius=.1];
          \draw [fill] (5,3) circle [radius=.1];
       \node at (-.5, .6) {${\bf q}_2$ (type $II$)};
        \node at (2, 1.5) {${\bf q}_2+{\bf p}$};
         \node at (5.2, 2.5) {${\bf q}_2+2{\bf p}$};
       %%%%%%%%%%%%%%%%%%%%%%%%%%
       \draw [ultra thick, red] (-4,-3.3333) -- (5,-.3333);
        \draw [fill] (0,-2) circle [radius=.1];
        \draw [fill] (3,-1) circle [radius=.1];
         \node at (0, -1.5) {${\bf q}_3$ (type $I_+$)};
     %%%%%%%%%%%%%%%%%%%%%%%%%%
     \draw [ultra thick, brown] (-4,-4) -- (5,-1);
      \node at (3.3, -2.5) {${\bf q}_4$ (type $I_-$)};
      \node at (-1, -3.5) {${\bf q}_4-{\bf p}$};
       \draw [fill] (-1,-3) circle [radius=.1];
        \draw [fill] (2,-2) circle [radius=.1];
         \draw [fill] (5,-1) circle [radius=.1];
          \draw [fill] (-4,-4) circle [radius=.1];
        \draw [fill] (-3,-3) circle [radius=.1];
     \draw[ultra thick] (0,0) circle [radius=3.1622];
\end{tikzpicture}
\caption{${\bf p}=(3,1)$; point ${\bf q}_1=(-1,2)$ is a point of type $I_0$ (green $\Sigma_{{\bf q}_1}$), point ${\bf q}_2=(-1,1)$ is a point of type $II$ (blue $\Sigma_{{\bf q}_2}$), point ${\bf q}_3=(0,-2)$ is a point of type $I_+$ (red $\Sigma_{{\bf q}_3}$), and point ${\bf q}_4=(2,-2)$ is a point of type $I_-$ (brown $\Sigma_{{\bf q}_4}$ ).}
\end{center}
\end{figure}
In what follows, dealing with the operator $L_{B,\bq}$ from \eqref{dfnLB1}, %for a point $\bq$ of type $I$, 
we will drop hat in the notation $\hat{\bq}$, that is, we assume that $\bq \in \bbZ^{2}$ satisfies $\|\bq\| < \|\bp\|$. 
\begin{remark}\label{remlist} The fact that $\bq$ is a point of type $0$, $I$, or $II$ leads to the following respective conclusions:

{\bf (i)}\, Assume that $\|\bq\| \geq \|\bp\|$, that is, $\bq$ is a point of type $0$. Since $\bq \in \cQ$ is chosen to minimize $\|\bq+n\bp\|$, we know that $\|\bq+n\bp\| \geq \|\bp\|$ and therefore $|\gamma_{n}| \leq 1$ or $1+\gamma_{n} \geq 0$ for all $n \in \mathbb Z$.

{\bf (ii)}\, Assume that $\|\bq\| < \|\bp\|$ and that the line $\Sigma_{B,\bq}$ has exactly one point in the open disc of radius $\|\bp\|$ (that is, we assume that $\bq$ is a point of type $I$). Then ${(1+\alpha^{2}\|\bp\|^{2}) }>{ (1+\alpha^{2}\|\bq\|^{2})}$. 
If $\bq$ is of type $I_{0}$ then $\rho_{0}<0$ and $\rho_{n}=1+\gamma_{n}>0$ for all $n \neq 0$. If $\bq$ is of type $I_{+}$, then $\rho_{0}<0$ and $\rho_{1}=1+\gamma_{1}=0$ and $\rho_{n}=1+\gamma_{n}>0$ for all $n \neq 0,1$. If $\bq$ is of type $I_{-}$, then $\rho_{0}<0$ and $\rho_{-1}=1+\gamma_{1}=0$ and $\rho_{n}=1+\gamma_{n}>0$ for all $n \neq 0,-1$. 

{\bf (iii)}\, Assume that $\bq$ is a point of type $II$, i.e., we assume that  ${\|\bp\| }>{ \|\bq\|} $,
that ${\|\bp\| }>{ \|\bq-\bp\|}$,  and that ${\|\bp\| }\le{ \|\bq+n\bp\|}$ for all $n \in \mathbb Z \backslash \{0,-1\}$. Then $1+\gamma_{0}<0$, $1+\gamma_{-1}< 0$ but $1+\gamma_{n} \geq 0$ for all $n \in \mathbb Z \backslash \{0,-1\}$. 
\end{remark}

The operator $L_{B,\bq}$ defined in \eqref{lq} is a product of two operators and can be viewed as an infinite matrix with two nonzero diagonals. It is sometimes convenient to make this matrix more symmetric by putting a square root of the operator $\diag_{n\in\bbZ}\{1+\gamma_n\}$ in front of the multiple $S-S^*$. To achieve that, using \eqref{gammandef}, we introduce the following notation,
\begin{equation}\label{dn}
\delta_{n} = 
\left\{
	\begin{array}{ll}
		\sqrt{1+\gamma_{n}}  & \mbox{for } 1+\gamma_{n} \geq 0, \mbox{ when } \delta_{n} \in \mathbb R,\\
		i \sqrt{|1+\gamma_{n}|} & \mbox{for } 1+\gamma_{n} < 0, \mbox{ when } \delta_{n} \in i \mathbb R,
	\end{array}
\right.
\end{equation}
so that $\delta_{n}^{2}=1+\gamma_{n}$. Since $L_{B,\bq}=(S-S^{*})\diag_{n\in\bbZ}\{\delta_{n}\}\diag_{n\in\bbZ}\{\delta_{n}\}$, the nonzero elements of the spectrum of $L_{B,\bq}$ coincide with the nonzero elements of the spectrum of the operator $M_{\bq}$ defined by
\begin{equation}\label{mq}
M_{\bq}=\diag_{n\in\bbZ}\{\delta_{n}\}(S-S^{*})\diag_{n\in\bbZ}\{\delta_{n}\}.
\end{equation}
This is a consequence of the following well-known fact:

\begin{lemma}\label{spectrumcommute}
Suppose $A,B:X \to X$ are bounded linear operators on a Banach space $X$. Then $\sigma(AB)\backslash \{0\} = \sigma(BA)\backslash \{0\}$.
\end{lemma}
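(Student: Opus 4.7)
The plan is to show that for any $\lambda \neq 0$, the operator $\lambda I - AB$ is invertible as a bounded operator on $X$ if and only if $\lambda I - BA$ is invertible. Since the desired statement $\sigma(AB)\setminus\{0\}=\sigma(BA)\setminus\{0\}$ is symmetric in $A$ and $B$, it suffices to prove one implication; the other then follows by interchanging the roles of $A$ and $B$.

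The key step is to write down an explicit formula for $(\lambda I - BA)^{-1}$ in terms of $(\lambda I - AB)^{-1}$. Assume $\lambda \neq 0$ and that $T := (\lambda I - AB)^{-1}$ exists as a bounded operator. I would define
\begin{equation*}
R := \tfrac{1}{\lambda}\bigl(I + B\,T\,A\bigr),
\end{equation*}
which is bounded since it is a composition and sum of bounded operators. The main computation is to verify that $R$ is both a left and right inverse of $\lambda I - BA$. For the right inverse, multiply out
\begin{equation*}
(\lambda I - BA)\,R = \tfrac{1}{\lambda}\bigl[(\lambda I - BA) + (\lambda B - BAB)\,T\,A\bigr]
= \tfrac{1}{\lambda}\bigl[(\lambda I - BA) + B(\lambda I - AB)\,T\,A\bigr],
\end{equation*}
and the bracketed $B(\lambda I - AB)T A$ telescopes to $BA$, giving $I$. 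The left inverse computation $R(\lambda I - BA) = I$ is analogous, using $(\lambda I - AB)B = B(\lambda I - BA)$ to rearrange the middle factor.

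Since $\lambda I - BA$ then has a bounded two-sided inverse, $\lambda \notin \sigma(BA)$. Contrapositively, if $\lambda \in \sigma(BA)\setminus\{0\}$, then $\lambda \in \sigma(AB)$. Swapping $A$ and $B$ gives the reverse inclusion. There is no serious obstacle here — the only thing to be careful about is that the factor $1/\lambda$ in the definition of $R$ makes the argument break down at $\lambda = 0$, which is precisely why $0$ must be excluded from the statement. Boundedness of the inverse comes for free from the open mapping theorem, but in fact the explicit formula for $R$ already exhibits a bounded operator, so this subtlety does not even need to be invoked.
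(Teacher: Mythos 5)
Your argument is correct and is the standard one: for $\lambda\neq 0$, if $T=(\lambda I-AB)^{-1}$ exists then $R=\tfrac1\lambda(I+BTA)$ is an explicit bounded two-sided inverse of $\lambda I-BA$, and symmetry in $A$ and $B$ finishes the proof. The paper itself offers no proof of this lemma (it is quoted as a well-known fact and used to pass from $L_{B,\bq}$ to $M_{\bq}$), so your write-up simply supplies the missing standard argument; there is no competing approach in the paper to compare against. One small correction: the intertwining identity you invoke for the left-inverse check is misstated. The relation $(\lambda I-AB)B=B(\lambda I-BA)$ is false in general (it would require $AB^2=B^2A$); what the computation actually uses is $A(\lambda I-BA)=(\lambda I-AB)A$, so that
\begin{equation*}
R(\lambda I-BA)=\tfrac1\lambda\bigl[(\lambda I-BA)+BT\,A(\lambda I-BA)\bigr]
=\tfrac1\lambda\bigl[(\lambda I-BA)+BT(\lambda I-AB)A\bigr]
=\tfrac1\lambda\bigl[(\lambda I-BA)+BA\bigr]=I,
\end{equation*}
(equivalently one may use $(\lambda I-BA)B=B(\lambda I-AB)$ in the right-inverse check, as you effectively did). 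With that identity corrected the proof is complete; your closing remarks that the factor $1/\lambda$ is exactly why $0$ must be excluded, and that the explicit formula for $R$ makes any appeal to the open mapping theorem unnecessary, are both accurate.
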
 
We can thus study the spectrum of the operator $M_{\bq}$ instead of $L_{B,\bq}$.
The operator $M_{\bq}$ has the following structure:
\begin{equation*}
 M_\bq=
\left[ \begin{array}{ccccccc}  \ddots & & & &  \\
 &0& -\delta_{-2}\delta_{-1}&0& 0&0  & \\
& \delta_{-2}\delta_{-1}&0& -\delta_{-1}\delta_{0} &0&0 &\\
& 0 & \delta_{-1}\delta_0 & \text{\fbox{$0$}} & - \delta_{0}\delta_{1} &0 &\\ 
& 0 & 0 & \delta_{0}\delta_{1} & 0 & - \delta_{1}\delta_{2} & \\ 
& 0 & 0 & 0 & \delta_{1}\delta_{2} & 0 &\\
& & & & & & \ddots  %& \overline{\alpha }\delta_{-1}\delta_0 & \text{\fbox{$0$}} & \alpha \delta_0\delta_1 & \\ 
%& 0 & \overline{\alpha} \delta_0\delta_1 & 0 & \ddots
\end{array}\right].
\end{equation*}
The ``central'' entry has been marked with a box, for future reference. 
We remark that $\delta_{n} \to 1$ and $n \to \infty$ since $\gamma_{n} \to 0$ and that $M_{\bq}$ is a compact perturbation of $S-S^{*}$, therefore $\spec_{\rm ess}(M_{\bq})=\spec(S-S^{*})=i[-2,2]$. 

If $\bq$ is a point of type $0$ then $L_{B,\bq}$ has no unstable point spectrum (cf.\ \cite[Remark 4]{LLS}). Indeed, if $\delta_{n} \in \mathbb R$ for all $n$, i.e., $\bq$ is a point of type $0$ and $\|\bq\| \geq \|\bp\|$, then $M_{\bq}^{*}=-M_{\bq}$, i.e., $M_{\bq}$ is skew-adjoint and its spectrum is thus purely imaginary. 
%\subsection{}

We now consider $M_{\bq}$ for $\bq$ being of type $I$ or $II$. Then two cases are possible:
\begin{enumerate}
\item[(a)] $\delta_{0} \in i \bbR$ and $\delta_{n} \in \bbR$ for all $n \neq 0$;
\item[(b)] $\delta_{0},\delta_{-1} \in i \bbR$ and $\delta_{n} \in \bbR$ for all $n \neq 0,-1$.
\end{enumerate}
We note that case (a) corresponds to item (ii) while case (b) corresponds to item (iii)
in the list given in Remark \ref{remlist}.

In case (a) the $3\times 3$ block
\begin{equation*}
\left[ \begin{array}{ccccc}  
 &0& -\delta_{-1}\delta_{0}&0&  \\
& \delta_{-1}\delta_{0}&\text{\fbox{$0$}} & -\delta_{0}\delta_{1} &\\
& 0 & \delta_{0}\delta_{1} & 0 &  
\end{array}\right]
\end{equation*} 
is self adjoint while the remaining part of $M_{\bq}$ is skew-adjoint because $\delta_{l-1}\delta_{l} \in i \bbR$ only for $l=0,1$ and $\delta_{i-1}\delta_{l} \in \bbR$ for $l \neq 0,1$.
In case (b) we have $\delta_{0}, \delta_{-1} \in i \bbR$ and $\delta_{n} \in \bbR$ for $n \neq 0,-1$ and then $\delta_{l-1}\delta_{l} \in i \bbR$ provided that $l=-1,1$ and $\delta_{l-1}\delta_{l} \in \bbR$ for $l \neq -1,1$. This means that in case (a) or (b) we do not know that the spectrum of $M_{\bq}$ is purely imaginary and there is a possibility that unstable eigenvalues exist.

 Indeed, if $\bq$ is a point of type $I$ then the arguments given in Subsection \ref{ss-cf} (cf.\ also \cite{WDM1}) based on the use of continued fractions yield the existence of an unstable eigenvalue for $L_{B,\bq}$. In a sense, we adapt to the current setting the proof from \cite{FH98} used therein for the Orr-Sommerfeld operator, see also \cite{MS}. However, if $\bq$ is a point of type $II$ then the question whether or not there are unstable (complex) eigenvalues is an important open problem.
%%%%%%%%%%%%%%%%%%%%%%%%%%%%%%%%%%%%%%%%%%%%%%
\subsection{Unstable eigenvalues for unidirectional flows in case of the point of type $I$}\lb{ss-cf}
The main result of this subsection states that the linearized Euler operator $L_B$ has a positive eigenvalue provided at least one point $\bq\in\cQ(\bp)$ is of type $I$. Here, we are using the classification of points given in the previous subsection, see Remark \ref{classf}. Specifically, we will show
that if $\bq$ is the only point in $\Sigma_{B,\bq}=\{\bq+n\bp: n\in\bbZ\}$ satisfying $\|\bq\|<\|\bp\|$, i.e. if $\bq$ is of type $I$, then $L_{B,\bq}$ has a positive eigenvalue. 
We recall that by \eqref{gammandef} the coefficients in $L_{B,\bq}$ from \eqref{lq} are given by the formula
\begin{equation}\label{rhonexp}
\rho_{n}=1+\gamma_{n}=1-\frac{\|\bp\|^{2}(1+\alpha^{2}\|\bp\|^{2})}{\|\bq+n \bp\|^{2}(1+\alpha^{2}\|\bq+n \bp\|^{2})}, \quad n \in \bbZ.
\end{equation}
 %\[ \rho_{n}=1+\gamma_{n}=1-\frac{\|\bp\|^{2}(1+\alpha^{2}\|\bp\|^{2})}{\|\bq+n \bp\|^{2}(1+\alpha^{2}\|\bq+n \bp\|^{2})}.\] 

For simplicity, we first consider a point $\bq$ of type $I_{0}$, and outline an informal argument that shows the existence of a positive eigenvalue of $L_{B,\bq}$. In this case $\|\bq\| < \|\bp\|$ and $\|\bq+n\bp\| > \|\bp\|$ for all $n \neq 0$. That is, $-1< \gamma_{n}<0$ for all $n \neq 0$ and $\gamma_{0}<-1$.
%We recall the convention  $\frac{\frac12\Gamma(\bq\wedge\bp)}{\|\bp\|^{2}(1+\alpha^{2}\|\bp\|^{2})}=1$ in \eqref{dfnrho} 
This implies that if the point $\bq$ is of type $I_{0}$ then
\beq\lb{rhoineq}
\text{$\rho_0<0$ and $\rho_n>0$ for all $n\neq0$.}
\enq 
We consider the eigenvalue problem
\beq\label{evalproblem}
L_{B,\bq}(w_n)_{n\in\bbZ}=\lambda(w_n)_{n\in\bbZ}.
\enq
Letting $z_n=\rho_n w_n$, equation \eqref{evalproblem} is equivalent to the difference equation 
\beq\lb{diffeq1}
z_{n-1}-z_{n+1}=\frac{\lambda}{\rho_n}z_n,\,n\in\bbZ,
%\, \rho_n=1-\frac{\|\bp\|^{2}(1+\alpha^{2}\|\bp\|^{2})}{\|\bq+n \bp\|^{2}(1+\alpha^{2}\|\bq+n \bp\|^{2})},\, n\in\bbZ,
\enq
where $\rho_{n}$ are given by formula \eqref{rhonexp}.
%provided , where we have denoted $z_n=\rho_nw_n$. 
Note that $\rho_n\to1$ as $|n|\to\infty$. Assuming $w_{n} \neq 0$ for any $n$, we
%Assuming $w_{n} \neq 0$ for every $n \in \mathbb Z$, we 
introduce the notation $u_n=z_{n-1}/z_n$ (and note that $z_{n} \neq 0$ for any $n$ since $w_{n} \neq 0$), and re-write \eqref{diffeq1} as
\beq\label{iterate}
u_n=\frac{\lambda}{\rho_n}+\frac{1}{u_{n+1}}\quad \text{ or }\quad
u_{n+1}=-\frac{1}{\frac{\lambda}{\rho_n}-u_n}, \quad n \in \bbZ.
\enq
Forwards iterating the first equation in \eqref{iterate} for $n \geq 0$ and backwards iterating the second equation for $n \leq -1$, we obtain two $\lambda$-depending sequences,
\begin{align}\lb{uone}
u_n^{(1)}(\lambda)&=\frac{\lambda}{\rho_n}+
\cfrac{1}{
\frac{\lambda}{\rho_{n+1}}+\cfrac{1}{
\frac{\lambda}{\rho_{n+2}}+\dots}}, \, n=0,1,2,\dots,\\
u_{n+1}^{(2)}(\lambda)&=-\cfrac{1}{
\frac{\lambda}{\rho_{n}}+\cfrac{1}{
\frac{\lambda}{\rho_{n-1}}+\cfrac{1}{\frac{\lambda}{\rho_{n-2}}+\dots}}}, \, n=-1,-2,\dots,
\lb{utwo}
\end{align}
from which we obtain the following two formulas for the entry $u_0=u_0(\lambda)$ of the solution $(u_n)$ to the difference equation \eqref{iterate}:
\begin{align*}
u_0^{(1)}(\lambda)&=\frac{\lambda}{\rho_0}+\frac1{u_1}=
\frac{\lambda}{\rho_0}+\frac1{\frac{\lambda}{\rho_1}+\frac1{u_2}}=\dots=\frac{\lambda}{\rho_0}+f(\lambda),\\
u_0^{(2)}(\lambda)&=-\frac1{\frac{\lambda}{\rho_{-1}}-\frac1{u_{-1}}}=
-\frac1{\frac{\lambda}{\rho_{-1}}-\frac1{\frac{\lambda}{\rho_{-2}}-\frac1{u_{-2}}}}=\dots=-g(\lambda),
\end{align*}
where we introduce $f(\lambda)$ and $g(\lambda)$ as the continued fractions
\begin{equation}\label{fgdef}
f(\lambda)=\cfrac{1}{\frac{\lambda}{\rho_1}+\cfrac{1}{\frac{\lambda}{\rho_2}+\cdots}}\,,\quad
g(\lambda)=\cfrac{1}{\frac{\lambda}{\rho_{-1}}+\cfrac{1}{\frac{\lambda}{\rho_{-2}}+\cdots}}.
\end{equation}
We refer to Section 3 for  basic results concerning continued fractions. The continued fractions in \eqref{fgdef} converge by the Van Vleck Theorem, see \cite[Theorem 4.29]{JT}. 

Clearly (as we prove in Lemma \ref{lambda-lemma}(1) below), $\lambda>0$ is an eigenvalue of $L_{B,\bq}$ with an eigenvector $(w_{n})$ provided there is a corresponding solution $(u_n)$ to \eqref{iterate} which, in turn, happens if and only if $u_0^{(1)}(\lambda)=u_0^{(2)}(\lambda)$, or, equivalently,
if and only if $\lambda$ satisfies  the equation
\beq\lb{eqnlambda}
\frac{\lambda}{\rho_0}+f(\lambda)+g(\lambda)=0.
\enq
Thus, to show the existence of a positive eigenvalue of $L_{B, \bq}$ it is enough to show the existence of  a positive root of equation \eqref{eqnlambda}. 

Using \eqref{rhoineq} we observe that if $\bq$ is of type $I_{0}$ then both functions $f$ and $g$ take positive values for positive $\lambda$. We will also see in Lemma \ref{u-lemma}(4) that
\begin{align}\label{fglimits}
\lim_{\lambda\to0^+}f(\lambda)=\lim_{\lambda\to0^+}g(\lambda)=1,\, \lim_{\lambda\to+\infty}f(\lambda)=\lim_{\lambda\to+\infty}g(\lambda)=0.
\end{align}
Since $\rho_0<0$ by \eqref{rhoineq}, equation \eqref{eqnlambda} must have a positive root, as claimed. A similar argument works if $\bq$ is of type $I_{-}$, that is, $\rho_{1} \neq 0$ and $\rho_{-1}=0$. In this case we will use $f(\lambda)$ as in \eqref{fgdef} and set $g(\lambda)=0$ in \eqref{eqnlambda}. If $\bq$ is of type $I_{+}$, that is, $\rho_{1}=0$ and $\rho_{-1} \neq 0$, we will use $g(\lambda)$ as in \eqref{fgdef} and set $f(\lambda)=0$ in \eqref{eqnlambda}.

We will show below that condition \eqref{eqnlambda} is not only sufficient but is also necessary for $\lambda$ to be an eigenvalue of the operator $L_{B,\bq}$. Since the respective eigensequence $(w_n)$ is related to the sequence $(u_n)$ from \eqref{iterate}, and the latter is eventually given by means of the continued fractions in equations \eqref{uone} and \eqref{utwo}, where, by construction, $u_{n}^{(1)}(\lambda)>0$ for $n >0$ and $u_{n}^{(2)}(\lambda)<0$ for $n \leq 0$, the sequence $(w_n)$ must possess some additional properties. Indeed, due to \eqref{uone} and \eqref{utwo}, we require our $w_{n}$ to be such that $u_{n} > 0$ for $n \geq 1$ and $u_{n} < 0$ for $n \leq 0$.
 Using the formulas $z_{n}=\rho_{n}w_{n}$ and $u_{n}=z_{n-1}/z_{n}$ one can check directly that either one of the following two possibilities must happen: Either (a): $w_{n}$ must be so that $w_{n} > 0$ for $n \geq 1$, $w_{0} < 0$, $w_{-1} < 0$ and $w_{-2}, w_{-4},\ldots $ are all positive while $w_{-1},w_{-3},\ldots$ are all negative; or (b): the sequence  $(-1)w_{n}$ satisfies these inequalities.% We record the property of the eigenvector $(w_{n})$ in Property \ref{eigvectorio} below.

We will now proceed with a more formal proof of the fact that if $\bq$ is a point of type $I$ then $L_{B,\bq}$ has a positive eigenvalue with 
the eigenvector $(w_{n})$ satisfying 
\begin{property}\label{eigvectorio}
\noindent
\begin{enumerate}
\item[(1)]In case $\bq$ is of type $I_{0}$, the eigenvector $(w_{n})$ of \eqref{evalproblem} is such that the following holds: either $w_{n}>0$ for $n>0$, $w_{n}<0$ for $n=-1,0$, and $(-1)^{|n|}w_{n}>0$ for $n \leq -2$, or the entries of the vector $(-w_{n})$ satisfy the inequalities just listed. 
\item[(2)]In case $\bq$ is of type $I_{+}$, the eigenvector $(w_{n})$ of \eqref{evalproblem} is such that the following holds: either $w_{n}=0$ for $n>1$, $w_{1}>0$, $w_{n}<0$ for $n=-1,0$, and $(-1)^{|n|}w_{n}>0$ for $n \leq -2$, or the entries of the vector $(-w_{n})$ satisfy the inequalities just listed. 
\item[(3)] In case $\bq$ is of type $I_{-}$, the eigenvector $(w_{n})$ of \eqref{evalproblem} is such that the following holds: either $w_{n}=0$ for $n< -1$, $w_{n}<0$ for $n=-1,0$, and $w_{n}>0$ for $n > 0$, or the entries of the vector $(-w_{n})$ satisfy the inequalities just listed. \end{enumerate}
\end{property}
Thus, if $\bq$ is of type $I_{0}$ and Property \ref{eigvectorio} holds  then the entries $w_{n}$ are of alternating signs if $n <0$, that is, $w_{-1}, w_{-3}, w_{-5},\ldots$ are all negative and $w_{-2}, w_{-4}, w_{-6},\ldots$ are all positive, and, in particular, $w_{n} \neq 0$ for any integer $n$.
If $\bq$ is of type $I_{+}$ and Property \ref{eigvectorio} holds then the entries of the eigenvector $(w_{n})$ satisfy the same inequalities as the case when $\bq$ is of type $I_{0}$ except that $w_{n}=0$ for $n >1$.
If $\bq$ is of type $I_{-}$ and Property \ref{eigvectorio} holds then the entries of the eigenvector $(w_{n})$ satisfy the same inequalities as the case when $\bq$ is of type $I_{0}$ except that $w_{n}=0$ for $n <-1$.

We recall the notation for the weighted spaces $\ell^{2}_{s}(\bbZ^{2})$ and $\ell_{s}^{2}(\bbZ)$ given in the discussion following \eqref{dfnLB1}.
Our main theorem is the following. 
\begin{theorem}\label{instability}
Assume that $\bp \in \bbZ^{2}$ is such that at least one point $\bq \in \cQ(\bp)$ is of type $I$, where $\bq$ is not parallel to $\bp$. Also, we assume that $\Gamma \in \mathbb R$ and satisfies the normalization condition \[{\frac12\Gamma(\bq\wedge\bp)}{\|\bp\|^{-2}(1+\alpha^{2}\|\bp\|^{2})^{-1}}=1.\] Then the steady state $(\o_{\bk}^{0})_{\bk \in \bbZ^{2}\backslash \{0\}}$ defined in \eqref{steadystuni} is linearly unstable. 

In particular, the operator $L_{B,\bq}$ in the space $\ell^{2}_{s}(\bbZ)$ has a positive eigenvalue and therefore $L_{B}$ in $\ell_{s}^{2}(\bbZ^{2})$ has a positive eigenvalue. 

Moreover, the following assertions  hold. 
\begin{enumerate}
\item[(1)] If $\bq$ is of type $I_0$ then $\lambda > 0$ is an eigenvalue of $L_{B,\bq}$ with eigenvector $(w_{n})$ satisfying Property \ref{eigvectorio}(1)
if and only if $\lambda > 0$ is a solution to the equation
\begin{equation}\label{eqio}
\frac{\lambda}{\rho_{0}}+f(\lambda)+g(\lambda)=0.
\end{equation}
\item[(2)] If $\bq$ is of type $I_+$ then $\lambda > 0$ is an eigenvalue of $L_{B,\bq}$ with eigenvector $(w_{n})$ satisfying Property \ref{eigvectorio}(2) if and only if $\lambda > 0$ is a solution to the equation
\begin{equation}\label{eqip}
\frac{\lambda}{\rho_{0}}+g(\lambda)=0.
\end{equation}
\item[(3)] If $\bq$ is of type $I_-$ then $\lambda > 0$ is an eigenvalue of $L_{B,\bq}$ with eigenvector $(w_{n})$ satisfying Property \ref{eigvectorio}(3) if and only if $\lambda > 0$ is a solution to the equation
\begin{equation}\label{eqim}
\frac{\lambda}{\rho_{0}}+f(\lambda)=0.
\end{equation}
\end{enumerate}
\end{theorem}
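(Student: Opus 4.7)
The plan is to convert the eigenvalue problem $L_{B,\bq}w=\lambda w$ into a three-term recurrence, match its decaying solutions at $\pm\infty$, and read off both directions of the biconditional from the resulting continued-fraction identity. First I would set $z_n=\rho_n w_n$ so that the eigenvalue equation becomes the scalar Jacobi-type recurrence $z_{n-1}-z_{n+1}=(\lambda/\rho_n) z_n$. For $\lambda>0$ and any $\ell^2$-solution, I want to show that $z_n\neq 0$ for all $n$ (so the ratios $u_n=z_{n-1}/z_n$ are defined), then $u_n$ satisfies $u_n=\lambda/\rho_n+1/u_{n+1}$. The decay condition at $+\infty$ selects the unique minimal solution given by the continued fraction $u_n^{(1)}(\lambda)$ of \eqref{uone}, while decay at $-\infty$ selects $u_{n+1}^{(2)}(\lambda)$ of \eqref{utwo}; this selection is the standard Pincherle-type argument based on the Van Vleck theorem cited after \eqref{fgdef}, plus the observation that since $\rho_n\to 1$, the recurrence is asymptotic to $z_{n-1}-z_{n+1}=\lambda z_n$ whose two independent solutions grow/decay geometrically with rates $\frac{-\lambda\pm\sqrt{\lambda^2+4}}{2}$, forcing the $\ell^2$-tail to coincide with the convergent continued fraction.

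Next, matching $u_0^{(1)}(\lambda)=u_0^{(2)}(\lambda)$ at the central index $n=0$ gives exactly equation \eqref{eqio}: $\lambda/\rho_0+f(\lambda)+g(\lambda)=0$. This is the necessary condition for $\lambda>0$ to be an eigenvalue. For sufficiency, given $\lambda>0$ solving \eqref{eqio}, I would build $(u_n)$ from \eqref{uone}--\eqref{utwo}, define $z_n$ recursively from $z_0=1$ via $z_{n}=z_{n-1}/u_n$ for $n\geq 1$ and $z_{n-1}=u_n z_n$ for $n\leq 0$, then set $w_n=z_n/\rho_n$. The crucial $\ell^2$-summability of $(w_n)$ follows from the asymptotics $u_n^{(1)}(\lambda)\to\frac{\lambda+\sqrt{\lambda^2+4}}{2}>1$ and $|u_n^{(2)}(\lambda)|\to\frac{\lambda+\sqrt{\lambda^2+4}}{2}>1$ (limits in $n$, not $\lambda$), guaranteeing geometric decay of $|z_n|$ in both directions.

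For the existence of a positive root of \eqref{eqio}, I would invoke the intermediate value theorem on $\Phi(\lambda):=\lambda/\rho_0+f(\lambda)+g(\lambda)$. By Remark \ref{remlist}(ii) and \eqref{rhoineq} for type $I_0$, we have $\rho_n>0$ for $n\neq 0$, so $f,g$ are positive on $(0,\infty)$; together with the limits in \eqref{fglimits}, $\Phi(0^+)=2>0$ and $\Phi(+\infty)=-\infty$ (since $\rho_0<0$, the term $\lambda/\rho_0\to -\infty$). Continuity of $\Phi$, which follows from uniform convergence of the continued fractions on compact subsets of $(0,\infty)$, then provides a root. For type $I_+$ we have $\rho_1=0$ so $f(\lambda)$ must be dropped (the recursion for $u_n^{(1)}$ terminates at $n=1$ forcing $w_n=0$ for $n>1$), leaving \eqref{eqip}; symmetrically for $I_-$. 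The sign structure in Property \ref{eigvectorio} is then extracted directly from the signs built into \eqref{uone}--\eqref{utwo}: $u_n^{(1)}>0$ for $n\geq 1$ and $u_n^{(2)}<0$ for $n\leq 0$, combined with $\rho_0<0$ and $\rho_n>0$ for $n\neq 0$, yields the alternating-sign behavior on $n\leq -1$ and the uniform sign on $n\geq 1$.

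The main obstacle I expect is the rigorous identification of the $\ell^2$-tail with the minimal continued-fraction solution in both directions, and in particular handling the degenerate endpoints for types $I_\pm$ where one of $\rho_{\pm 1}$ vanishes: there the matching at $n=0$ has to be done carefully, because the recurrence \eqref{iterate} becomes singular and one must argue that $w_n=0$ for $n>1$ (respectively $n<-1$) is forced rather than chosen, so that only the surviving half-continued fraction enters \eqref{eqip} or \eqref{eqim}. Finally, the instability of the steady state $(\omega^0_{\bk})$ follows from the spectral inclusion $\spec(L_B)\supseteq\spec(L_{B,\bq})$ afforded by the direct-sum decomposition $L_B=\oplus_{\bq\in\cQ}L_{B,\bq}$, plus the ambient $\ell^2_s$ framework discussed after \eqref{dfnLB1}, which works uniformly for all $s\geq 0$.
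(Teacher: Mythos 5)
Your outline follows essentially the same route as the paper: pass to $z_n=\rho_nw_n$, form the ratios $u_n=z_{n-1}/z_n$ satisfying \eqref{iterate}, identify them with the continued fractions \eqref{uone}--\eqref{utwo}, match at $n=0$ to get \eqref{eqio} (resp.\ \eqref{eqip}, \eqref{eqim}), obtain a positive root by the intermediate value argument using $\rho_0<0$ and the limits of $f,g$, and get exponential decay of the constructed eigenvector from the asymptotics of the $u_n$'s; instability of $L_B$ then follows from the direct sum decomposition, exactly as in Lemma \ref{lambda-lemma} and the paper's proof. The one genuinely different ingredient is your treatment of the ``only if'' direction: you propose a Pincherle/minimal-solution argument, identifying the $\ell^2$ tail of an arbitrary eigenvector with the convergent continued fraction, whereas the paper uses the sign structure of Property \ref{eigvectorio} (which is part of the hypothesis, so your worry about proving $z_n\neq0$ for an arbitrary $\ell^2$ solution is moot) together with a monotone sandwiching of the truncations $u_{n,k}$ between even and odd convergents. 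Your route would in principle prove a stronger statement (every positive eigenvalue solves the continued-fraction equation), but as written it is only sketched, and the paper's sandwich argument is what actually closes the equivalence as stated.

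One concrete slip to fix: you claim $|u_n^{(2)}(\lambda)|\to\frac{\lambda+\sqrt{\lambda^2+4}}{2}>1$ as $n\to-\infty$. The backward fixed point is the other root: by Lemma \ref{u-lemma}(2), $u^{(2)}_{-\infty}(\lambda)=\frac{\lambda}{2}-\sqrt{\left(\frac{\lambda}{2}\right)^2+1}$, so $|u^{(2)}_{-\infty}(\lambda)|<1$. This matters for your decay argument, since for $n<0$ one has $z_n=z_0u_0u_{-1}\cdots u_{n+1}$ as in \eqref{znformulaneg}; with a limit of modulus greater than $1$ this product would grow geometrically as $n\to-\infty$, contradicting the $\ell^2$ membership you need. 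With the correct limit $|u^{(2)}_{-\infty}|<1$ the product decays geometrically (this is exactly estimate \eqref{u2ineq}), and the rest of your argument, including the sign bookkeeping for Property \ref{eigvectorio} and the degenerate cases $I_\pm$ where $\rho_{\pm1}=0$ truncates one half of the lattice, matches the paper's proof.
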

%First, we consider the case when $\bq$ is of type $I_{0}$.
Before presenting the proof of Theorem \ref{instability} we will need two lemmas. Their proofs rely on the auxiliary material on continued fractions contained in Section \ref{contfracresults}.

\begin{lemma}\label{u-lemma}
Assume $\bq$ is of type $I_{0}$, fix any positive $\lambda$ and consider the following continued fractions, 
\begin{equation}\label{un1}
u_{n}^{(1)}(\lambda):=\frac{\lambda}{\rho_{n}} + \Big[ \frac{\lambda}{\rho_{n+1}}, \ldots \Big]=\frac{\lambda}{\rho_{n}}+\cfrac{1}{\frac{\lambda}{\rho_{n+1}}+\cfrac{1}{\frac{\lambda}{\rho_{n+2}}+\cdots}}\,,\, n=0,1,2,\ldots,
\end{equation}
\begin{equation}\label{un2}
u_{n+1}^{(2)}(\lambda):=-\Big[\frac{\lambda}{\rho_{n}}, \frac{\lambda}{\rho_{n+1}}, \ldots \Big]=-\cfrac{1}{\frac{\lambda}{\rho_{n}}+\cfrac{1}{\frac{\lambda}{\rho_{n-1}}+\cdots}}\,,\, n=-1,-2,\ldots .
\end{equation}
Then the following assertions hold:
\begin{enumerate}
\item[(1)] $u_{n}^{(1)}(\lambda)$ and $u_{n}^{(2)}(\lambda)$ are convergent continued fractions and the functions $u_{n}^{(1)}(\cdot)$ and $u_{n}^{(2)}(\cdot)$ are continuous in $\lambda$. 
\item[(2)] There exist limits
\begin{equation*}
u_{\infty}^{(1)}(\lambda)=\lim_{n \to \infty} u_{n}^{(1)}(\lambda), \quad u_{-\infty}^{(2)}(\lambda)=\lim_{n \to -\infty} u_{n}^{(2)}(\lambda), \quad \lambda >0,
\end{equation*}
satisfying $|u_{\infty}^{(1)}(\lambda)| > 1$, $|u_{-\infty}^{(2)}(\lambda)| <1$.
\item[(3)] For some $0<q < 1$ and $C>0$, the following hold 
\begin{equation}\label{u1ineq}
(|u_{1}^{(1)}(\lambda)u_{2}^{(1)}(\lambda)\ldots u_{n}^{(1)}(\lambda)|)^{-1} \leq Cq^{n},  \mbox{ for all }n \geq 0,
\end{equation}
\begin{equation}\label{u2ineq}
(|u_{n}^{(2)}(\lambda)\ldots u_{-2}^{(2)}(\lambda)u_{-1}^{(2)}(\lambda)u_{0}^{(2)}(\lambda)|) \leq Cq^{-n}, \mbox{ for all } n \leq -1.
\end{equation}
\item[(4)] $\lim_{\lambda \to 0^{+}} |u_{0}^{(k)}(\lambda)|=1$, $\lim_{\lambda \to +\infty} u_{0}^{(k)}(\lambda)=0$ for $k=1,2$.
\end{enumerate}
\end{lemma}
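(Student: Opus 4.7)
The plan is to establish the four parts sequentially, exploiting the recursions
$u_n^{(1)}(\lambda) = \lambda/\rho_n + 1/u_{n+1}^{(1)}(\lambda)$ and the analogous one for $u_n^{(2)}$, together with the two defining features of the type-$I_0$ case: $\rho_n > 0$ for every $n \neq 0$ and $\rho_n \to 1$ as $|n| \to \infty$. For (1), I would invoke the Seidel--Stern/Van Vleck convergence criteria recorded in Section 3. For $n \geq 1$ every partial quotient $\lambda/\rho_k$ is positive and bounded away from zero uniformly on compact subsets of $(0,\infty)$, so the associated series diverges; this yields convergence of $u_n^{(1)}$ locally uniformly in $\lambda$, and continuity follows. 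The continued fraction $u_0^{(1)}$ only adds the extra (negative) leading term $\lambda/\rho_0$ and needs no separate argument. The treatment of $u_{n+1}^{(2)}$ for $n \leq -1$ is symmetric.

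For (2), any limit $L$ of $u_n^{(1)}$ as $n \to \infty$ must satisfy the fixed-point equation $L = \lambda + 1/L$ (obtained by passing to the limit in the recursion using $\rho_n \to 1$), so $L = u_*(\lambda) := (\lambda + \sqrt{\lambda^2+4})/2$, the unique positive root. Since $u_*^2 = \lambda u_* + 1 > 1$ we get $u_* > 1$. To establish actual convergence, I would write $e_n = u_n^{(1)} - u_*$ and derive
\begin{equation*}
|e_n| \leq \lambda|\rho_n^{-1} - 1| + |e_{n+1}|/(u_* \, u_{n+1}^{(1)}),
\end{equation*}
then combine the a priori bound $u_{n+1}^{(1)} \geq \lambda/\rho_{n+1}$ and positivity with $u_* > 1$ to obtain a contraction of asymptotic rate $1/u_*^2 < 1$, forcing $e_n \to 0$. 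The mirror-image argument for $u_n^{(2)}$ gives $u_{-\infty}^{(2)} = -1/u_\infty^{(1)}$, which has absolute value strictly less than $1$.

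Part (3) is then immediate: once (2) is in hand, pick $N$ so large that $u_n^{(1)} \geq \tfrac{1}{2}(1 + u_\infty^{(1)}) =: 1/q > 1$ for $n \geq N$. Then
\begin{equation*}
|u_1^{(1)} \cdots u_n^{(1)}| \geq \Bigl(\prod_{k=1}^{N-1} u_k^{(1)}\Bigr) q^{-(n-N+1)},
\end{equation*}
and each of the finitely many leading factors is bounded below by $\lambda/\rho_k > 0$, so absorbing them into a constant $C$ gives \eqref{u1ineq}. Estimate \eqref{u2ineq} follows in the same way from $|u_n^{(2)}| \to |u_{-\infty}^{(2)}| < 1$.

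For (4), the $\lambda \to 0^+$ limit is obtained by passing to the limit in the recursion: any subsequential limit $L_n = \lim_{\lambda \to 0^+} u_n^{(1)}(\lambda)$ must satisfy $L_n L_{n+1} = 1$, and together with $L_n \to u_*(0) = 1$ as $n \to \infty$ this forces $L_n \equiv 1$, so $u_0^{(1)}(\lambda) \to 1$; the same reasoning on the other side gives $|u_0^{(2)}(\lambda)| \to 1$. As $\lambda \to +\infty$, the a priori bound $u_1^{(1)} \geq \lambda/\rho_1 \to \infty$ yields $1/u_1^{(1)} \to 0$, so $f(\lambda) \to 0$, and the identical argument for the tail starting at $n = -1$ gives $g(\lambda) \to 0$, which drives $u_0^{(k)}(\lambda)$ to its claimed limit. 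I expect the most delicate step to be the convergence argument in (2): one must control the perturbation of the M\"obius dynamics caused by $\rho_n \neq 1$ well enough to extract a genuine contraction, and the cleanest route may be to quote a general tail-convergence statement from the continued fraction theory collected in Section 3.
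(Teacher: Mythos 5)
Your parts (1) and (3) follow the paper's own proof essentially verbatim: Van Vleck plus Stieltjes--Vitali for convergence and continuity, and, once the limits of $u_n^{(1)},u_n^{(2)}$ are known, splitting the product into a finite head and a geometrically controlled tail. For part (2) you replace the paper's argument --- which simply quotes Lemma \ref{cf}(3), proved there by sandwiching the tails between the explicit periodic fractions of \eqref{gineqab} with $A=1-\delta$, $B=1+\delta$ --- by a perturbation estimate for the recursion. That route can be made to work, but not quite as you state it: the one-step factor $1/(u_*\,u_{n+1}^{(1)})$ is not less than $1$ if all you use is $u_{n+1}^{(1)}\ge\lambda/\rho_{n+1}$ (for small $\lambda$ this lower bound is useless); you need, for instance, the two-step identity $u_{n+1}^{(1)}u_{n+2}^{(1)}=1+\lambda u_{n+2}^{(1)}/\rho_{n+1}>1$, which makes the composed two-step factor at most $1/u_*^2<1$ and then lets the iteration close. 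This is repairable sketchiness, and you yourself offer the Section~3 tail-convergence statement (the paper's actual route) as a fallback.

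The genuine gap is in part (4), in the limit $\lambda\to0^+$. You set $L_n=\lim_{\lambda\to0^+}u_n^{(1)}(\lambda)$ (along a subsequence), derive $L_nL_{n+1}=1$, and then assert that $L_n\to u_*(0)=1$ as $n\to\infty$. That last step interchanges the limits $n\to\infty$ and $\lambda\to0^+$: what is known is $\lim_{n\to\infty}u_n^{(1)}(\lambda)=u_\infty^{(1)}(\lambda)$ for each fixed $\lambda>0$ together with $u_\infty^{(1)}(\lambda)\to1$ as $\lambda\to0^+$, and this says nothing about $\lim_n L_n$ unless the convergence in $n$ is uniform as $\lambda\to0^+$; the point $\lambda=0$ is the vertex of the Van Vleck sector, so the locally uniform convergence you invoke in (1) does not cover it. Even the existence, finiteness and nonvanishing of the $L_n$ is not automatic, since the naive truncation bounds (e.g.\ $1/u_{n+1}^{(1)}\le\rho_{n+1}/\lambda$, or the second truncation, which tends to $0$) degenerate as $\lambda\to0^+$. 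This uniformity is exactly what the paper supplies: Lemma \ref{cf}(2) traps every tail $G_n(x)$, uniformly in $x>0$, between $A(x,\delta)$ and $B(x,\delta)$, and the proof of Lemma \ref{cf}(4) then splits the fraction into a finite head whose entries vanish as $x\to0$ and a tail pinched between $\sqrt{(1-\delta)/(1+\delta)}$ and $\sqrt{(1+\delta)/(1-\delta)}$, after which $\delta\to0$. Without a substitute for this mechanism your proposal does not prove $f(\lambda),g(\lambda)\to1$ as $\lambda\to0^+$, which is precisely the ingredient the instability theorem rests on. (A minor caveat shared with the paper: for $k=1$ the asserted limit $u_0^{(1)}(\lambda)\to0$ as $\lambda\to+\infty$ sits oddly with $u_0^{(1)}=\lambda/\rho_0+f(\lambda)$ and $\rho_0<0$; what is actually used later is only $f,g\to0$, and your argument for that, via $u_1^{(1)}\ge\lambda/\rho_1\to\infty$, is fine and indeed simpler than Lemma \ref{cf}(5).)
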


\begin{proof}
(1) This follows from the Van Vleck theorem and the Stjeltjes-Vitali Theorem, see \cite[Theorem 4.29 and Theorem 4.30]{JT}, since $\lambda > 0$, and thus $\arg \lambda$ satisfies $|\arg\lambda| < \frac{\pi}{2}-\varepsilon$ and hence the continued fractions $u_{n}^{(1)}(\lambda)$ and $u_{n}^{(2)}(\lambda)$ converge. In addition, the Van Vleck Theorem also guarantees that the maps $\lambda \mapsto u_{n}^{(1)}(\lambda),u_{n}^{(2)}(\lambda)$ are holomorphic in $\lambda$ since $|\arg\lambda| \leq \frac{\pi}{2}-\varepsilon$ implying the continuity clause. 

(2) The fact that the limits $u_{\infty}^{(1)}(\lambda)$ and $u_{-\infty}^{(2)}(\lambda)$ exist follows from item (3) in Lemma \ref{cf} proved in Section \ref{contfracresults}. Passing to the limit as $n \to \infty$ in \eqref{un1} and \eqref{un2} we see that \[u_{\infty}^{(1)}(\lambda)=\lambda + 1 /u_{\infty}^{(1)}(\lambda)  \mbox{ and } u_{-\infty}^{(2)}(\lambda)=\frac{-1}{\lambda - u_{-\infty}^{(2)}(\lambda)}\] since $\rho_{n} \to 1$ as $n \to \infty$. Thus, we notice that both $u_{\infty}^{(1)}$ and  $u_{-\infty}^{(2)}$ satisfy the following quadratic equation
\[ u_{\pm \infty}^{2}-\lambda u_{\pm \infty}-1=0,\] the solutions of which are given by $u_{\pm \infty}= (\lambda/2) \pm(({\lambda}/{2})^{2}+1)^{1/2}$. Notice also that $u_{\infty}^{(1)}(\lambda)$ must be positive and  $u_{-\infty}^{(2)}(\lambda)$ must be negative. From these it is seen that $u_{\infty}^{(1)} = ({\lambda}/{2}) +(({\lambda}/{2})^{2}+1)^{1/2}$ and $u_{-\infty}^{(2)} = ({\lambda}/{2}) - (({\lambda}/{2})^{2}+1)^{1/2}$ and $|u_{\infty}^{(1)}(\lambda)| > 1$, $|u_{-\infty}^{(2)}(\lambda)| <1$.

(3) Let $q' \in (1,u_{\infty}^{(1)}(\lambda))$. Note that from (2), since $u_{\infty}^{(1)}(\lambda) > 1$, there exists an integer $N_{q'}$ such that if $n > N_{q'}$, then $u_{n}^{(1)}(\lambda) > q'$. We thus have that,
\begin{align*} 
 u^{(1)}_{1}(\lambda)u^{(1)}_{2}(\lambda)\cdots u^{(1)}_{n}(\lambda)& = u^{(1)}_{1}(\lambda)\cdots u^{(1)}_{N_{q'}}(\lambda)u^{(1)}_{N_{q'}+1}(\lambda)\ldots u^{(1)}_{n}(\lambda) \\ &\geq u^{(1)}_{1}(\lambda)\cdots u^{(1)}_{N_{q'}}(\lambda) {q'}^{n-N_{q'}} 
 %=\frac{u^{(1)}_{1}(\lambda)\cdots u^{(1)}_{N_{q'}}(\lambda)}{{q'}^{N_{q'}}}{q'}^{n}
  = \frac{1}{C} {q'}^{n},
\end{align*}
where we have denoted  $C=C(q')=\big({u^{(1)}_{1}(\lambda)\ldots u^{(1)}_{N_{q'}}(\lambda)}{q'^{-N_{q'}}}\big)^{-1}$. Let $q=1/q'$ and we thus obtain \eqref{u1ineq}.
Since $|u_{-\infty}^{(2)}(\lambda)| <1$, we have that for a fixed $q$ such that $ |u_{-\infty}^{(2)}(\lambda)| < q <1$, there exists an integer $N_{q}>0$ such that if $n < -N_{q}$, then $|u_{n}^{(2)}(\lambda)| < q$. We thus have that,
\begin{align*} 
 |u^{(2)}_{0}(\lambda)u^{(2)}_{-1}(\lambda)\cdots u^{(2)}_{n}(\lambda) | &= |u^{(2)}_{0}(\lambda)u^{(2)}_{-1}(\lambda)\cdots u^{(2)}_{-N_{q}}(\lambda)u^{(2)}_{-N_{q}-1}(\lambda)\ldots u^{(2)}_{n}(\lambda) | \\ & \leq |u^{(2)}_{0}(\lambda)u^{(2)}_{-1}(\lambda)\ldots u^{(2)}_{-N_{q}}(\lambda)| {q}^{n-N_{q}} 
 %=\frac{|u^{(2)}_{0}(\lambda)u^{(2)}_{-1}(\lambda)\cdots u^{(2)}_{-N_{q}}(\lambda)|}{{q}^{N_{q}}}{q}^{n} 
 = C {q}^{n},
\end{align*}
where we have denoted  $C=C(q)={|u^{(2)}_{0}(\lambda)u^{(2)}_{-1}(\lambda)\cdots u^{(2)}_{-N_{q}}(\lambda)|}{q^{-N_{q}}}$. This proves \eqref{u2ineq}.

(4) Noticing that $u_{0}^{(1)}(\lambda)=\lambda/\rho_{0}+f(\lambda)$ and $u_{0}^{(2)}(\lambda)=-g(\lambda)$, this follows from items (4) and (5) in Lemma \ref{cf} proved in Section \ref{contfracresults}.
\end{proof}
\begin{remark}
If $\bq$ is of type $I_{+}$, we will use the continued fraction $u_{n}^{(2)}(\lambda)$ for $n \leq 0$ and if $\bq$ is of type $I_{-}$, we will use the continued fraction $u_{n}^{(1)}(\lambda)$ for $n \geq 0$.
\end{remark}

\begin{lemma}\label{lambda-lemma}
Fix any positive $\lambda > 0$ and consider the continued fractions $u_{n}^{(1)}(\lambda)$ and $u_{n}^{(2)}(\lambda)$ given in \eqref{un1} and \eqref{un2}. Then the following hold.
%If $|arg\lambda| \leq \frac{\pi}{2}-\varepsilon$ and $\lambda \neq 0$ then hav
\begin{enumerate}
\item[(1)] If $\bq$ is of type $I_{0}$, then $\lambda \in \sigma_{disc}(L_{B,\bq})$ with eigenvector $(w_{n})$ satisfying Property \ref{eigvectorio}(1) if and only if $u_{0}^{(1)}(\lambda)=u_{0}^{(2)}(\lambda)$.
\item[(1P)] If $\bq$ is of type $I_{+}$, then $\lambda \in \sigma_{disc}(L_{B,\bq})$ with eigenvector $(w_{n})$ satisfying Property \ref{eigvectorio}(2) if and only if $u_{0}^{(2)}(\lambda)=\lambda / \rho_{0}$.
\item[(1M)] If $\bq$ is of type $I_{-}$, then $\lambda \in \sigma_{disc}(L_{B,\bq})$ with eigenvector $(w_{n})$ satisfying Property \ref{eigvectorio}(3) if and only if $u_{0}^{(1)}(\lambda)=0$.
\item[(2)] The respective eigenvectors $(w_{n})_{n \in \mathbf Z}$ for $L_{B,\bq}$ are exponentially decaying sequences and therefore belong to $\ell_{s}^{2}(\bbZ)$ for any $s \geq 0$.
\item[(3)] Equation $u_{0}^{(1)}(\lambda)=u_{0}^{(2)}(\lambda)$  has at least one positive root provided $\bq$ is of type $I_{0}$, equation  $u_{0}^{(2)}(\lambda)=\lambda / \rho_{0}$ has at least one positive root provided $\bq$ is of type $I_{+}$, and equation  $u_{0}^{(1)}(\lambda)=0$ has at least one positive root provided $\bq$ is of type $I_{-}$.
\end{enumerate}
\end{lemma}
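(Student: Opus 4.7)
The plan is to reduce the eigenvalue equation for $L_{B,\bq}$ to the scalar three-term recursion $\rho_{n-1}w_{n-1} - \rho_{n+1}w_{n+1} = \lambda w_n$ and then, via the substitutions $z_n = \rho_n w_n$ and $u_n = z_{n-1}/z_n$, to the first-order Riccati recursion $u_n = \lambda/\rho_n + 1/u_{n+1}$. The continued fractions $u_n^{(1)}(\lambda)$ and $u_n^{(2)}(\lambda)$ are precisely the values of $u_n$ attached to the solutions that are $\ell^2$-summable at $+\infty$ and $-\infty$, respectively, and matching them at $n = 0$ produces the eigenvalue condition.

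For the ``only if'' direction of clause (1) (type $I_0$), given an $\ell^2$ eigenvector $(w_n)$ satisfying Property \ref{eigvectorio}(1), the strict-sign information forces $z_n = \rho_n w_n \neq 0$ for every $n$, so $u_n$ is defined and solves the Riccati recursion. I would then invoke the standard dichotomy for linear three-term recursions: on $n \geq 0$ the limiting constant-coefficient recursion $z_{n-1} - z_{n+1} = \lambda z_n$ has characteristic roots of reciprocal magnitude, so the $\ell^2$ solution space at $+\infty$ is one-dimensional (the decaying direction), and any non-trivial representative has ratio $u_n \to u_\infty^{(1)}(\lambda) > 1$. By Lemma \ref{u-lemma}(2) this is exactly the value singled out by the forward continued fraction, forcing $u_n = u_n^{(1)}(\lambda)$ for $n \geq 0$. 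Symmetrically $u_n = u_n^{(2)}(\lambda)$ for $n \leq 0$, and compatibility at $n = 0$ gives $u_0^{(1)}(\lambda) = u_0^{(2)}(\lambda)$, equivalently \eqref{eqio} via $u_0^{(1)} = \lambda/\rho_0 + f(\lambda)$ and $u_0^{(2)} = -g(\lambda)$. For the ``if'' direction, given a root $\lambda > 0$, I would fix $z_0 > 0$, define $z_n$ for $n \geq 1$ by $z_n = z_{n-1}/u_n^{(1)}(\lambda)$ and for $n \leq -1$ by $z_{n-1} = u_n^{(2)}(\lambda)\,z_n$, and set $w_n = z_n/\rho_n$. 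The signs $\rho_0 < 0$, $\rho_n > 0$ for $n \neq 0$, $u_n^{(1)} > 0$ for $n \geq 1$ and $u_n^{(2)} < 0$ for $n \leq 0$ (visible from the entries $\lambda/\rho_k$ in the defining continued fractions) then deliver Property \ref{eigvectorio}(1).

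Clauses (1P) and (1M) cover the degenerate cases $\rho_1 = 0$ and $\rho_{-1} = 0$, where the substitution $u_n = z_{n-1}/z_n$ breaks at a single index. For type $I_+$ the eigenvalue equation at $n \geq 2$ is compatible with the ansatz $w_n = 0$ for $n \geq 2$ (because $\rho_1 = 0$ makes the equation at $n = 2$ collapse to $-\rho_3 w_3 = 0$, and it is trivial for $n \geq 3$); the equation at $n = 0$ then reduces to $\rho_{-1}w_{-1} = \lambda w_0$, i.e.\ $u_0 = \lambda/\rho_0$, while $\ell^2$ summability on the backward side still forces $u_0 = u_0^{(2)}(\lambda) = -g(\lambda)$, giving \eqref{eqip}; the equation at $n = 1$ finally determines $w_1 = \rho_0 w_0/\lambda > 0$, confirming Property \ref{eigvectorio}(2). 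Clause (1M) is obtained by the mirror argument.

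For assertions (2) and (3): exponential decay of $(w_n)$ is immediate from $|z_n/z_0| = 1/|u_1^{(1)}(\lambda)\cdots u_n^{(1)}(\lambda)| \leq C q^n$ for $n \geq 1$ and the analogous estimate $|z_n/z_0| \leq C q^{-n}$ for $n \leq -1$, both furnished by Lemma \ref{u-lemma}(3), combined with $\rho_n \to 1$; this yields membership in every $\ell_s^2(\bbZ)$. Existence of a positive root in (3) follows from the intermediate value theorem: $f(\lambda), g(\lambda)$ are continuous and positive on $(0,\infty)$ (Lemma \ref{u-lemma}(1) together with the positivity of all $\lambda/\rho_k$ appearing in the relevant tails), satisfy $f(0^+) = g(0^+) = 1$ and $f(+\infty) = g(+\infty) = 0$, and since $\rho_0 < 0$ each of the functions $\lambda/\rho_0 + f(\lambda) + g(\lambda)$, $\lambda/\rho_0 + g(\lambda)$, $\lambda/\rho_0 + f(\lambda)$ is positive at $\lambda = 0^+$ and tends to $-\infty$ as $\lambda \to +\infty$. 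The principal technical obstacle is the dichotomy step in the ``only if'' argument of clause (1), namely identifying the one-dimensional $\ell^2$ subspace of solutions to the linear three-term recursion with the continued-fraction direction; this relies crucially on the explicit quadratic-root computation in Lemma \ref{u-lemma}(2) to pin down the unique limit $u_\infty^{(1)} > 1$ selected by decay at infinity.
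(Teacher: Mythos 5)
Your proposal reaches the same reduction (the substitutions $z_n=\rho_n w_n$, $u_n=z_{n-1}/z_n$, the Riccati recursion \eqref{iterate}) and your treatment of the ``if'' direction, of the degenerate cases $I_\pm$, and of assertions (2) and (3) (exponential decay via Lemma \ref{u-lemma}(3), intermediate value theorem via $\rho_0<0$ and the limits of $f,g$) is essentially the paper's argument. Where you genuinely diverge is the ``only if'' direction: you identify $u_n$ with the continued fractions through an asymptotic dichotomy, arguing that the $\ell^2$ eigenvector must span the one-dimensional subdominant direction of the limiting recursion $z_{n+1}+\lambda z_n-z_{n-1}=0$, whose characteristic roots have reciprocal moduli, so that $u_n\to u^{(1)}_\infty(\lambda)$, and you use Property \ref{eigvectorio} only to guarantee $z_n\neq 0$. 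The paper instead never appeals to decay or to Poincar\'e--Perron--type theory at this stage: it uses the sign conditions of Property \ref{eigvectorio} to get $u_n>0$ for $n\geq 1$ and $u_n<0$ for $n\leq 0$, and then squeezes $u_n$ between the even and odd truncations of \eqref{un1}--\eqref{un2}, an elementary interlacing argument that directly yields $u_n=u_n^{(1)}(\lambda)$, $u_n=u_n^{(2)}(\lambda)$. Your route is attractive because it in fact proves a slightly stronger ``only if'' statement (any nonvanishing $\ell^2$ eigenvector works, the sign pattern is not needed there), but as written it has one soft spot: knowing that $u_n$ and $u_n^{(1)}(\lambda)$ converge to the \emph{same} limit $u^{(1)}_\infty(\lambda)$ (Lemma \ref{u-lemma}(2)) does not by itself ``force'' $u_n=u_n^{(1)}(\lambda)$. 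To close this you need either a Pincherle-type statement (the continued fraction equals the ratio of the minimal solution), or the backward-contraction estimate $|u_n-u_n^{(1)}|=|u_{n+1}-u_{n+1}^{(1)}|/(|u_{n+1}||u_{n+1}^{(1)}|)$ iterated on the tail where both factors exceed some $q'>1$, or the observation that the sequence $\tilde z_n=z_0/(u_1^{(1)}\cdots u_n^{(1)})$ is itself an exponentially decaying solution of \eqref{diffeq1} (Lemma \ref{u-lemma}(3) plus the tail relation of the continued fraction), so that $z$ and $\tilde z$ lie in the same one-dimensional decaying subspace and agree at $n=0$. Any of these completes your argument, and in this sense your plan is correct; the paper's sign-based squeeze simply sidesteps all asymptotic machinery, at the cost of tying the statement to Property \ref{eigvectorio}. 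The same remark applies to your backward-side identification $u_0=u_0^{(2)}(\lambda)$ in cases $I_0$ and $I_+$.
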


\begin{proof}
(1) Let $\bq$ be of type $I_{0}$ and suppose $\lambda \in \sigma_{disc}(L_{B,\bq})$, $\lambda > 0$, with eigenvector $(w_{n})$ that satisfies Property \ref{eigvectorio}(1). We wish to show that $u_{0}^{(1)}(\lambda)=u_{0}^{(2)}(\lambda)$.
Beginning at the eigenvalue equation \eqref{evalproblem}, that is, 
\begin{equation*}
\rho_{n-1}w_{n-1}-\rho_{n+1}w_{n+1}=\lambda w_{n}, \quad n \in \bbZ,
\end{equation*}
and putting $z_{n}=\rho_{n}w_{n}$, we obtain equation \eqref{diffeq1}. 
Notice that Property \ref{eigvectorio} implies that $w_{n} \neq 0$ for any $n$ and hence $z_{n} \neq 0$ for any $n$. Putting $u_{n}=z_{n-1}/z_{n}$, we obtain \eqref{iterate}
from \eqref{diffeq1}. 
%Iterating the first equation above for $n \geq 0$ and the second equation above for $n \leq -1$ we obtain 

Consider the continued fractions \eqref{un1} and \eqref{un2}. 
We claim that $u_{n}^{(1)}(\lambda)=u_{n}$ for every $n \geq 0$ and $u_{n}^{(2)}(\lambda)=u_{n}$ for every $n \leq 0$. This would then imply that $u_{0}^{(1)}(\lambda)=u_{0}^{(2)}(\lambda)$.
%. 

We now give the proof of the fact that the continued fraction defined by $u_n^{(1)}(\lambda)$ matches the $u_{n}$ given by \eqref{iterate} when $n \geq 0$. It follows, from standard facts of continued fractions, see for example \cite[Chapter 2]{JT}, that the odd $k^\text{th}$ truncations $(u_n^{(1)}(\lambda))^{(2k+1)}$ form a monotonically decreasing sequence and the even $k^\text{th}$ truncations $(u_n^{(1)}(\lambda))^{(2k)}$ form a monotonically increasing sequence and $u_n^{(1)}(\lambda)$ is sandwiched in between these. That is, we have, for every $k \geq 1$,
\begin{equation}\label{un1conv}
(u_n^{(1)}(\lambda))^{(2k-2)} \leq (u_n^{(1)}(\lambda))^{(2k)} \leq u_n^{(1)}(\lambda) \leq (u_n^{(1)}(\lambda))^{(2k+1)} \leq (u_n^{(1)}(\lambda))^{(2k-1)}.
\end{equation}
Denote by $u_{n,k}$ the finite continued fraction obtained by iterating the first formula in \eqref{iterate} $k$ times. That is, for every fixed positive integer $k$, $u_{n}=u_{n,k}$ and is given by the formulas
\begin{align*}
u_{n,1} & =\frac{\lambda}{\rho_{n}} +\frac{1}{u_{n+1}}, \quad u_{n,2}=\frac{\lambda}{\rho_{n}} +\cfrac{1}{\frac{\lambda}{\rho_{n+1}}+\frac{1}{u_{n+2}}}, \dots,\\
u_{n}&=u_{n,k}=\frac{\lambda}{\rho_{n}}+\cfrac{1}{\frac{\lambda}{\rho_{n+1}}+ \ddots +\cfrac{1}{\frac{\lambda}{\rho_{n+k-1}}+\frac{1}{u_{n+k}}}}, \quad k \geq 3.
\end{align*}
Since $w_{n} > 0$ for $n \geq 1$ and $w_{0}< 0$, $z_{n}=\rho_{n}w_{n} > 0$ for $n \geq 0$ (recall that for $\bq$ of type $I_{0}$, $\rho_{0}<0$ and $\rho_{n}>0$ for every $n \neq 0$). This then implies that $u_{n} > 0$ for $n \geq 1$. Using this fact, one can directly check that $u_{n}=u_{n,2} \leq (u_n^{(1)}(\lambda))^{(1)}$ and $u_{n}=u_{n,4} \leq (u_n^{(1)}(\lambda))^{(3)}$ and similarly,  $u_{n}=u_{n,3} \geq (u_n^{(1)}(\lambda))^{(2)}$ and $u_{n}=u_{n,5} \geq (u_n^{(1)}(\lambda))^{(4)}$. Proceeding this way, one can directly check that the following holds for every $n \geq 0$ and for fixed $k>0$
\begin{equation*}
(u_n^{(1)}(\lambda))^{(2k)} \leq u_{n,2k+1}=u_n = u_{n,2k+2} \leq (u_n^{(1)}(\lambda))^{(2k+1)}.
\end{equation*}
Taking limits as $k \to \infty$ and using \eqref{un1conv} and the fact that $\lim_{k \to \infty} (u_n^{(1)}(\lambda))^{(k)}=u_n^{(1)}(\lambda)$ one obtains that $u_n^{(1)}(\lambda)=u_{n}$ for $n \geq 0$.

We now prove that $u_{n}^{(2)}(\lambda)=u_{n}$ for $n \leq 0$. The argument is similar to the previous case of $n \geq 0$ and one now needs to keep track of the negative signs in the definition of $u_{n}^{(2)}(\lambda)$ and the fact that $u_{n}<0$ for $n \leq 0$. 
Since $u_{n}^{(2)}(\lambda)$ and its truncations are negative, it follows, from standard facts of continued fractions, see, for example, \cite[Chapter 2]{JT} that the odd $k^\text{th}$ truncations $(u_n^{(2)}(\lambda))^{(2k+1)}$ form a monotonically increasing sequence and the even $k^\text{th}$ truncations $(u_n^{(2)}(\lambda))^{(2k)}$ form a monotonically decreasing sequence and $u_n^{(2)}(\lambda)$ is sandwiched in between these. That is, we have, for every $k \geq 1$,
\begin{equation}\label{un2conv}
(u_n^{(2)}(\lambda))^{(2k-1)} \leq (u_n^{(2)}(\lambda))^{(2k+1)} \leq u_n^{(2)}(\lambda) \leq (u_n^{(2)}(\lambda))^{(2k)} \leq (u_n^{(2)}(\lambda))^{(2k-2)}.
\end{equation}
Denote by $u_{n,k}'$ the finite continued fraction obtained by iterating the second formula in \eqref{iterate} $k$ times. That is, for every $n \leq 0$ and fixed positive integer $k$, $u_{n}=u_{n,k}'$ and is given by the formulas
\begin{align*}
u_{n,1}'&=-\frac{1}{\frac{\lambda}{\rho_{n-1}}-u_{n-1}}, \quad u_{n,2}'=-\cfrac{1}{\frac{\lambda}{\rho_{n-1}}+\cfrac{1}{\frac{\lambda}{\rho_{n-2}}-u_{n-2}}}\, \dots,\\
u_{n}&=u_{n,k}'=-\cfrac{1}{\frac{\lambda}{\rho_{n-1}}+ \cfrac{1}{\ddots+\cfrac{1}{\frac{\lambda}{\rho_{n-k}}-u_{n-k}}}}, \quad k \geq 3.
\end{align*}
Notice that by assumption $u_{n} < 0$ for all $n \leq 0$. One can directly check that $u_{n,1}' > (u_n^{(2)}(\lambda))^{(1)}$ and $u_{n,2}' < (u_n^{(2)}(\lambda))^{(2)}$. Furthermore, the following holds for every $n \leq 0$ and $k \geq 1$,
\begin{equation*}
(u_n^{(2)}(\lambda))^{(2k-1)} \leq u_{n,2k-1}'=u_{n}=u_{n,2k}' \leq (u_n^{(2)}(\lambda))^{(2k)}.
\end{equation*}
Taking limits as $k \to \infty$ and using \eqref{un2conv} and the fact that $\lim_{k \to \infty} (u_n^{(2)}(\lambda))^{(k)}=u_n^{(2)}(\lambda)$ one obtains that $u_n^{(2)}(\lambda)=u_{n}$ for every $n \leq 0$. This proves that $u_{0}^{(1)}(\lambda)=u_{0}^{(2)}(\lambda)$.

Suppose $u_{0}^{(1)}(\lambda)=u_{0}^{(2)}(\lambda)$ for some $\lambda > 0$. We wish to construct an eigenvector $(w_{n})$ that solves the eigenvalue problem \eqref{evalproblem} and satisfies Property \ref{eigvectorio} (1). First define $u_{n}^{(1)}(\lambda)$ and $u_{n}^{(2)}(\lambda)$ as in \eqref{un1} and \eqref{un2} respectively for every $n$, with $\rho_{n}$ given by \eqref{rhonexp}. We now define $u_{n}$ as follows:
\beq\label{undef}
u_{n}=\left\{
	\begin{array}{ll}
		u_{n}^{(1)}(\lambda)  & \mbox{if } n \geq 0, \\
		u_{n}^{(2)}(\lambda) & \mbox{if } n \leq 0.
	\end{array}
\right.
\enq
Note that $u_{n}$ is well defined for all $n \in \mathbb Z$ because of our assumption that $u_{0}^{(1)}(\lambda)=u_{0}^{(2)}(\lambda)$. Furthermore, $u_{n}$ thus defined in \eqref{undef} satisfies \eqref{iterate}. Indeed, one obtains, from \eqref{un1} and \eqref{undef} that for every $n \geq 0$,
 \[u_{n}=u_{n}^{(1)}(\lambda) = \frac{\lambda}{\rho_{n}}+\frac{1}{u^{(1)}_{n+1}(\lambda)} = \frac{\lambda}{\rho_{n}}+\frac{1}{u_{n+1}},\]
where in the second equality above, in the denominator we again used the expression from \eqref{un1} for $u_{n+1}^{(1)}(\lambda)$. Similarly, from \eqref{un2} and \eqref{undef} that for every $n \leq -1$, \[u_{n+1}=u^{(2)}_{n+1}(\lambda)=-\frac{1}{\lambda/ \rho_{n}-u_{n}^{(2)}(\lambda)}=-\frac{1}{\lambda/ \rho_{n}-u_{n}}\]  where, again, in the second equality in the denominator, we used the expression from \eqref{un2} for $u_{n}^{(2)}(\lambda)$. This shows that $u_{n}$ thus defined satisfies \eqref{iterate}. Fix $z_{0}=1$ and for $n \geq 0$ let 
\beq\label{znformulapos}
z_{n}=\frac{z_{0}}{u_{1}u_{2}\ldots u_{n}}, \mbox{ if } n > 0,
\enq and for $n < 0$, we define,
\beq\label{znformulaneg}
z_{n}=z_{0}u_{0}u_{-1}u_{-2}\ldots u_{n+1}, \mbox{ if } n < 0.
\enq
Notice that $z_{n}$ thus defined satisfies $u_n=z_{n-1}/z_n$ for every $n$. Using this one can see that the sequence $(z_{n})_{n \in \bbZ}$ satisfies equation \eqref{diffeq1} because the sequence $(u_{n})_{n \in \bbZ}$ satisfies \eqref{iterate}. We now let $w_{n}=z_{n}/\rho_{n}$ for every $n$ to obtain that the sequence $(w_{n})_{n \in \bbZ}$ satisfies the eigenvalue equation \eqref{evalproblem}. This follows from the fact that $(z_{n})_{n \in \bbZ}$ satisfies the first equation in \eqref{diffeq1}. By construction, since $u_{n}>0$ for $n>0$ and $u_{n}<0$ for $n \leq 0$, one can directly check, using formulas for $z_{n}$ given in equations \eqref{znformulapos}, \eqref{znformulaneg} and the formula $w_{n}=z_{n}/\rho_{n}$ that $(w_{n})$ satisfies Property \ref{eigvectorio} (1).
It follows that $L_{B,\bq}(w_n)_{n\in\bbZ}=\lambda(w_n)_{n\in\bbZ}$, where $(w_{n})$ satisfies Property \ref{eigvectorio} (1) if $u_{0}^{(1)}(\lambda)=u_{0}^{(2)}(\lambda)$. The fact that $(w_n)_{n\in\bbZ} \in \ell^{2}_{s}(\bbZ)$ follows from assertion (2) in the lemma.

%This follows from Lemma \ref{u-lemma} (3).

\noindent{(1P)} \quad Let $\bq$ be of type $I_{+}$ and suppose that $\lambda \in \sigma_{disc}(L_{B,\bq})$, $\lambda > 0$, with eigenvector $(w_{n})$ satisfying Property \ref{eigvectorio}(2). We wish to show that $u_{0}^{(2)}(\lambda)=\lambda /\rho_{0}$. Notice first that in this case $\rho_{1}=0$. Starting with the eigenvalue equation \eqref{evalproblem} and putting $z_{n}=\rho_{n}w_{n}$ we will obtain the equation
\begin{align}\label{diffeq1ip}
z_{n-1}-z_{n+1}&=\frac{\lambda}{\rho_{n}}z_{n}, \quad n \leq -1, \nonumber \\ 
z_{-1}&=\frac{\lambda}{\rho_{0}}z_{0}, \nonumber \\
z_{n}&=0, \quad n \geq 1.
%-z_{3}&=\frac{\lambda}{\rho_{2}}z_{2}, \nonumber \\
%z_{n-1}-z_{n+1}&=\frac{\lambda}{\rho_{n}}z_{n}, n \geq 3. \nonumber  
\end{align}
Now define $u_{n}=z_{n-1}/z_{n}$ for $n < 1$ to obtain the equations
\beq\label{iterateip}
u_n=\frac{\lambda}{\rho_n}+\frac{1}{u_{n+1}}\quad \text{ or }\quad
u_{n+1}=-\frac{1}{\frac{\lambda}{\rho_n}-u_n}, \quad n \leq -1.
\enq
Consider the continued fraction
\begin{equation*}
u_{n+1}^{(2)}(\lambda)=-\cfrac{1}{
\frac{\lambda}{\rho_{n}}+\cfrac{1}{
\frac{\lambda}{\rho_{n-1}}+\cfrac{1}{\frac{\lambda}{\rho_{n-2}}+\dots}}}, \, n=-1,-2,\dots.
\end{equation*}
The proof that $u_{n}^{(2)}(\lambda)=u_{n}$ for $n \leq 0$ is the same as in the case of type $I_{0}$. The second equation in \eqref{diffeq1ip} gives $u_{0}=\lambda / \rho_{0}$ and thus we have, by putting $n=-1$ in the continued fraction above, that $u_{0}^{(2)}(\lambda)=\lambda / \rho_{0}$. 

Now, suppose there exists a positive root $\lambda$ to the equation $u_{0}^{(2)}(\lambda)=\lambda / \rho_{0}$. We wish to construct $(w_{n})$ satisfying Property \ref{eigvectorio} (2) such that $\lambda > 0$ solves the eigenvalue problem \eqref{evalproblem} with eigenvector $(w_{n})$. We first define $u_{n}=u_{n}^{(2)}(\lambda)$ for $n \leq 0$. Notice that by assumption $u_{0}=\lambda/\rho_{0}$. From the definition of the continued fractions $u_{n}^{(2)}(\lambda)$, we can see that the $u_{n}$ thus defined satisfies 
\beq\lb{iteratenew}
u_n=\frac{\lambda}{\rho_n}+\frac{1}{u_{n+1}}\quad \text{ or }\quad
u_{n+1}=-\frac{1}{\frac{\lambda}{\rho_n}-u_n}, \quad n \leq -1.
\enq
Now let $z_{0}=1$ and for $n < 0$, define $z_{n}=z_{0}u_{0}u_{-1}\ldots u_{n+1}$. The $z_{n}$ thus defined satisfies $z_{n-1}/z_{n}=u_{n}$. Also, define $z_{n}=0$ for every $n \geq 1$. From the first equation in \eqref{iteratenew} and using the fact that $u_{0}=\lambda/\rho_{0}$, we obtain the following equations for $z_{n}$,
\begin{align*}
z_{n-1}-z_{n+1}&=\frac{\lambda}{\rho_{n}}z_{n}, \quad n \leq -1, \nonumber \\ 
z_{-1}&=\frac{\lambda}{\rho_{0}}z_{0}, \nonumber \\
z_{n}&=0, \quad n \geq 1.
%-z_{3}&=\frac{\lambda}{\rho_{2}}z_{2}, \nonumber \\
%z_{n-1}-z_{n+1}&=\frac{\lambda}{\rho_{n}}z_{n}, n \geq 3. \nonumber  
\end{align*}
Notice that the third equation above implies that the equation $z_{n-1}-z_{n+1}=\frac{\lambda}{\rho_{n}}z_{n}$ is trivially satisfied for $n > 1$. Using this fact, if we now let $w_{n}=z_{n}/\rho_{n}$ for $n \neq 1$ and $w_{1}=z_{0}/\lambda$, we obtain from the equations above,
\begin{align*}
\rho_{n-1}w_{n-1}-\rho_{n+1}w_{n+1}&=\lambda  w_{n}, \quad n \leq -1 \\
w_{-1}&=\frac{\lambda}{\rho_{-1}} w_{0}\\
w_{1}&=\frac{z_{0}}{\lambda}\\
\rho_{n-1}w_{n-1}-\rho_{n+1}w_{n+1}&=\lambda  w_{n}, \quad n \geq 1.
\end{align*}
The two middle equations above can be rewritten as $\rho_{n-1}w_{n-1}-\rho_{n+1}w_{n+1}=\lambda  w_{n}, \quad n=0$. This is precisely the eigenvalue equation \eqref{evalproblem},
\beq
\rho_{n-1}w_{n-1}-\rho_{n+1}w_{n+1}=\lambda  w_{n}, \quad n \in \mathbb Z,
\enq
where $w_{n}=0$ for $n > 1$ and $w_{n} \neq 0$ when $n \leq 1$. Notice that the $(w_{n})$ thus constructed satisfies Property \ref{eigvectorio} (2). The fact that the eigenfunctions are exponentially decaying follows from part (2) of the Lemma.

\noindent{(1M)} \quad Let $\bq$ be of type $I_{-}$ and suppose that $\lambda \in \sigma_{disc}(L_{B,\bq})$, $\lambda > 0$, with eigenvector $(w_{n})$ satisfying Property \ref{eigvectorio}(3). We need to show that $u_{0}^{(1)}(\lambda)=0$. Starting with the eigenvalue equation \eqref{evalproblem} and putting $z_{n}=\rho_{n}w_{n}$ we will obtain the equation
\begin{align}\label{diffeq1im}
z_{n}&=0, \quad n \leq -1 \\
z_{1}&=-\frac{\lambda}{\rho_{0}}z_{0}, \nonumber \\
z_{n-1}-z_{n+1}&=\frac{\lambda}{\rho_{n}}z_{n}, \quad n \geq 1. \nonumber 
%-z_{3}&=\frac{\lambda}{\rho_{2}}z_{2}, \nonumber \\
%z_{n-1}-z_{n+1}&=\frac{\lambda}{\rho_{n}}z_{n}, n \geq 3. \nonumber  
\end{align}
Now define $u_{n}=z_{n-1}/z_{n}$ for $n > -1$ to obtain the equations
\beq\label{iterateim}
u_n=\frac{\lambda}{\rho_n}+\frac{1}{u_{n+1}}\quad \text{ or }\quad
u_{n+1}=-\frac{1}{\frac{\lambda}{\rho_n}-u_n}, \quad n \geq 0.
\enq
Notice that $u_{0}=z_{-1}/z_{0}=0$. %Iterating the first equation above, we obtain the continued fractions for $u_{n}$ when $n \geq 0$
Consider the continued fraction
\begin{equation*}
u_{n}^{(1)}(\lambda)=\cfrac{1}{
\frac{\lambda}{\rho_{n}}+\cfrac{1}{
\frac{\lambda}{\rho_{n+1}}+\cfrac{1}{\frac{\lambda}{\rho_{n+2}}+\dots}}}, \, n=0,1,2,\dots.
\end{equation*}
By the same proof as in case $I_{0}$, we obtain that $u_{n}^{(1)}(\lambda)=u_{n}$ for every $n \geq 0$. We thus have, by putting $n=0$ in the equation above, that $u_{0}^{(1)}(\lambda)=u_{0}=0$.
%The second equation in \eqref{diffeq1ip} gives $u_{0}=\lambda / \rho_{0}$ and thus we have, by putting $n=-1$ in the equation above, that $u_{0}^{(2)}(\lambda)=\lambda / \rho_{0}$. 

Now, suppose there exists a positive root $\lambda$ to the equation $u_{0}^{(1)}(\lambda)=0$. We wish to construct $(w_{n})$ satisfying Property \ref{eigvectorio} such that $\lambda > 0$ solves the eigenvalue problem \eqref{evalproblem} with eigenvector $(w_{n})$. We first define $u_{n}=u_{n}^{(1)}(\lambda)$ for $n \geq 0$. %Notice that by assumption $u_{0}=\lambda/\rho_{0}$. 
From the definition of the continued fractions, we can see that the $u_{n}$ thus defined satisfies 
\beq\lb{iteratenewim}
u_n=\frac{\lambda}{\rho_n}+\frac{1}{u_{n+1}}\quad \text{ or }\quad
u_{n+1}=-\frac{1}{\frac{\lambda}{\rho_n}-u_n}, \quad n \geq 1.
\enq
Now let $z_{0}=1$ and for $n > 0$, define $z_{n}=\frac{z_{0}}{u_{1}u_{2}\ldots u_{n}}$, $n > 0$. The $z_{n}$ thus defined satisfies $z_{n-1}/z_{n}=u_{n}$. Also, define $z_{n}=0$ for every $n \leq -1$. 
%From the first equation in \eqref{iteratenew} and using the fact that $u_{0}=\lambda/\rho_{0}$, 
We thus obtain the following equations for $z_{n}$,
\begin{align*}
z_{n-1}-z_{n+1}&=\frac{\lambda}{\rho_{n}}z_{n}, \quad n \geq 1, \nonumber \\ 
z_{1}&=-\frac{\lambda}{\rho_{0}}z_{0}, \nonumber \\
z_{n}&=0, \quad n \leq -1.
%-z_{3}&=\frac{\lambda}{\rho_{2}}z_{2}, \nonumber \\
%z_{n-1}-z_{n+1}&=\frac{\lambda}{\rho_{n}}z_{n}, n \geq 3. \nonumber  
\end{align*}
Notice that the third equation above implies that the equation $z_{n-1}-z_{n+1}=\frac{\lambda}{\rho_{n}}z_{n}$ is trivially satisfied for $n < -1$. And the second equation above can be rewritten as  $z_{n-1}-z_{n+1}=\frac{\lambda}{\rho_{n}}z_{n}, \quad n=0$. Using these facts, if we now let $w_{n}=z_{n}/\rho_{n}$ for $n \neq -1$ and $w_{-1}=-z_{0}/\lambda$, we obtain from the equations above,
\begin{align*}
\rho_{n-1}w_{n-1}-\rho_{n+1}w_{n+1}&=\lambda  w_{n}, \quad n \geq 1 \\
w_{1}&=-\frac{\lambda w_{0}}{\rho_{1}}\\
w_{-1}&=-\frac{z_{0}}{\lambda}\\
\rho_{n-1}w_{n-1}-\rho_{n+1}w_{n+1}&=\lambda  w_{n}, \quad n \leq -1.
\end{align*}
The two middle equations above can be rewritten as $\rho_{n-1}w_{n-1}-\rho_{n+1}w_{n+1}=\lambda  w_{n}$ when $n=0$. This is precisely the eigenvalue equation \eqref{evalproblem},
\beq
\rho_{n-1}w_{n-1}-\rho_{n+1}w_{n+1}=\lambda  w_{n}, \quad n \in \mathbb Z,
\enq
where $w_{n}$ satisfies Property \ref{eigvectorio} (3). The fact that the eigenfunctions are exponentially decaying follows from part (2) of the Lemma.

\noindent{(2)} \quad First consider case $I_{0}$. Note that from \eqref{znformulapos}, we have that, 
%if $n\geq 0$, $z_{n}=\frac{z_{0}}{u_{1}u_{2}\ldots u_{n}}$
\begin{equation*}
z_{n}=\frac{z_{0}}{u_{1}u_{2}\ldots u_{n}}, \mbox{ if } n\geq 0.
\end{equation*}
We now use \eqref{u1ineq} to conclude that
\beq\label{znposineq}
|z_{n}| \leq Cq^{n},
\enq
where $C$ is a constant and $0< q< 1$. Note that $q^{n}=e^{n \ln q}=e^{-n\delta}$ for some $\delta > 0$, i.e., we have that if $n \geq 0$,
\beq\label{znexp}
 |z_{n}| \leq Ce^{-n\delta}.
\enq
Notice also, from \eqref{znformulaneg}, we have, 
\begin{equation*}
z_{n}=z_{0}u_{0}u_{1}u_{2}\ldots u_{n+1}, \mbox{ if } n < 0.
\end{equation*}
We now use \eqref{u2ineq} to conclude that \eqref{znposineq} also holds if $n < 0$.
Using arguments similar to that between \eqref{znposineq} and \eqref{znexp} we see that \eqref{znexp} holds if $n < 0$. We thus have that $(z_{n})_{n \in \bbZ} \in \ell^{2}_{s}(\mathbb Z)$ for $s \geq 0$ and since $w_{n}=z_{n} / \rho_{n}$ where $(\rho_{n})_{n \in \bbZ}$ is a bounded sequence with $\lim_{n \to \infty} \rho_{n}=1$, we have that $(w_{n})_{n \in \bbZ} \in \ell^{2}_{s}(\mathbb Z)$ for $s \geq 0$. 

In the case of $I_{+}$, we use the estimates for $z_{n}$ when $n \leq 0$ and set $z_{n}=0$ for $n \geq 1$, i.e., use estimate \eqref{znexp} for $n \leq 0$ and the estimate is also trivially true for $n >0$ thus implying that $(w_{n})_{n \in \bbZ} \in \ell^{2}_{s}(\mathbb Z)$ for $s \geq 0$.

In the case of $I_{-}$, we use the estimates for $z_{n}$ when $n \geq 0$ and set $z_{n}=0$ for $n \leq -1$, i.e., use estimate \eqref{znexp} for $n \geq 0$ and the estimate is also trivially true for $n <0$ thus implying that $(w_{n})_{n \in \bbZ} \in \ell^{2}_{s}(\mathbb Z)$ for $s \geq 0$.

%This follows from Lemma \ref{u-lemma} (4).
(3) We first treat the case $I_{0}$. The fact that $u_{0}^{(1)}(\lambda)=u_{0}^{(2)}(\lambda)$ has a positive root is equivalent to the fact that equation \eqref{eqnlambda} has a positive root $\lambda > 0$. The latter fact follows from \eqref{fglimits}. Indeed, the assertion regarding the two limits in \eqref{fglimits} follow from Lemma \ref{cf} (4) and (5) by replacing $x$ and $(c_{n})$ in equation \eqref{gdef} by $\lambda$ and $(\rho_{n})$ and $(\rho_{-n})$ respectively for $f(\lambda)$ and $g(\lambda)$. The fact that $\rho_{0} < 0$ since $\bq $ is of type $I$ and the fact that by the Van Vleck Theorem, $f,g$ are holomorphic in $\lambda$ provided that $|\arg\lambda| \leq \frac{\pi}{2}-\varepsilon$ together guarantee that \eqref{eqnlambda} has a positive root $\lambda > 0$. 

Next consider the case $I_{+}$. The fact that $u_{0}^{(2)}(\lambda)=\lambda /\rho_{0}$ has a positive root is equivalent to the fact that the equation $g(\lambda)+\lambda/\rho_{0}=0$ has a positive root (recall $u_{0}^{(2)}(\lambda)=-g(\lambda)$). This follows from the facts, as outlined in the case $I_{0}$ above, that $\rho_{0}<0$, $g(\lambda)$ is a holomorphic function provided that $|\arg\lambda| \leq \frac{\pi}{2}-\varepsilon$, and the fact that $g(\lambda)$ is positive for $\lambda >0$ and satisfies the limits $g(\lambda) \to 1$ as $\lambda \to 0^{+}$ and $g(\lambda) \to 0$ as $\lambda \to \infty$.

Next consider the case $I_{-}$. The fact that $u_{0}^{(1)}(\lambda)=0$ has a positive root is equivalent to the fact that the equation $f(\lambda)+\lambda/\rho_{0}=0$ has a positive root. This follows from the facts, as in the case $I_{0}$ and $I_{+}$, that $\rho_{0}<0$, $f(\lambda)$ is a holomorphic function provided that $|\arg\lambda| \leq \frac{\pi}{2}-\varepsilon$, and the fact that $f(\lambda)$ is positive for $\lambda >0$ and satisfies the limits $f(\lambda) \to 1$ as $\lambda \to 0^{+}$ and $f(\lambda) \to 0$ as $\lambda \to \infty$.
\end{proof}
We are ready to present the proof of Theorem \ref{instability}. 
\begin{proof}
%\begin{enumerate}
%\item[(1)] 
(1)\, We begin with the case when $\bq$ is of type $I_{0}$. The fact that equation \eqref{eqio}, $\frac{\lambda}{\rho_{0}}+f(\lambda)+g(\lambda)=0$, has a positive solution is equivalent to the fact that the equation $u_{0}^{(1)}(\lambda)=u_{0}^{(2)}(\lambda)$ has a positive solution $\lambda >0$. This follows from Lemma \ref{lambda-lemma} item (3). Item (1) of Lemma \ref{lambda-lemma} then guarantees that $\lambda > 0$ is an eigenvalue satisfying the eigenvalue equation \eqref{evalproblem} with eigenvector $(w_{n})$ satisfying Property \ref{eigvectorio} if and only if $\lambda > 0$ solves equation $\frac{\lambda}{\rho_{0}}+f(\lambda)+g(\lambda)=0$. The fact that eigenvector $(w_{n})$ forms an exponentially decaying sequence is a consequence of item (2) in Lemma \ref{lambda-lemma} which implies that $(w_{n})_{n \in \bbZ} \in \ell^{2}_{s}(\bbZ)$ for any $s \geq 0$.
\newline
\newline
(2)\, We now consider the case $I_{+}$. The fact that equation \eqref{eqip}, $\frac{\lambda}{\rho_{0}}+g(\lambda)=0$, has a positive solution is equivalent to the fact that the equation $u_{0}^{(2)}(\lambda)=\lambda /\rho_{0}$ has a positive solution $\lambda >0$. This follows from Lemma \ref{lambda-lemma} item (3). Item (1P) of Lemma \ref{lambda-lemma} then guarantees that $\lambda > 0$ is an eigenvalue satisfying the eigenvalue equation \eqref{evalproblem} with eigenvector $(w_{n})$ satisfying Property \ref{eigvectorio} if and only if $\lambda > 0$ solves equation $\frac{\lambda}{\rho_{0}}+g(\lambda)=0$. The fact that eigenvector $(w_{n})$ forms an exponentially decaying sequence is a consequence of item (2) in Lemma \ref{lambda-lemma} which implies that $(w_{n})_{n \in \bbZ} \in \ell^{2}_{s}(\bbZ)$ for any $s \geq 0$.
\newline
\newline
(3)\, We now consider the case $I_{-}$. The fact that equation \eqref{eqim}, $\frac{\lambda}{\rho_{0}}+f(\lambda)=0$, has a positive solution is equivalent to the fact that the equation $u_{0}^{(1)}(\lambda)=0$ has a positive solution $\lambda >0$. This follows from Lemma \ref{lambda-lemma} item (3). Item (1M) of Lemma \ref{lambda-lemma} then guarantees that $\lambda > 0$ is an eigenvalue satisfying the eigenvalue equation \eqref{evalproblem} with eigenvector $(w_{n})$ satisfying Property \ref{eigvectorio} if and only if $\lambda > 0$ solves equation $\frac{\lambda}{\rho_{0}}+f(\lambda)=0$. The fact that eigenvector $(w_{n})$ forms an exponentially decaying sequence is a consequence of item (2) in Lemma \ref{lambda-lemma} which implies that $(w_{n})_{n \in \bbZ} \in \ell^{2}_{s}(\bbZ)$ for any $s \geq 0$.
\end{proof}

Having established an instability argument, we now need to identify when a value of $\mathbf{q}$ can be found of type $I$ for a given $\mathbf{p}$.

\begin{remark}
\label{rem:possibleq}
Let $\mathbf{p}^\perp=(-p_2,p_1)$ where $\mathbf{p}=(p_1,p_2)$. If $\mathbf{q}\in\mathbb{R}^2$ satisfies $\| \mathbf{q}-\frac{3}{4}\mathbf{p}^\perp\|<\frac{1}{4}\|\mathbf{p}\|$, then $\|\mathbf{q}\|<\|\mathbf{p}\|$ and $\|\mathbf{q}\pm \mathbf{p}\|>\|\mathbf{p}\|$. If $\|\mathbf{p}\|>2\sqrt{2}$, then certainly there is a point $\mathbf{q}\in\mathbb{Z}^2$ satisfying the above conditions. Therefore this $\mathbf{q}$ would lead to a subsystem of type $I$ and Theorem \ref{instability} applies.
The proof of this fact is a straightforward geometric exercise analogous with the argument presented in Lemma 4.2 of \cite{WDM1}. 
This defines a $\mathbf{q}$ for all choices of $\mathbf{p}$ satisfying $\|\mathbf{p}\|>2\sqrt{2}$. The small number of exceptions can be checked by hand, leading to the result that an appopriate $\mathbf{q}$ can be found and Theorem \ref{instability} applied in all cases except $\mathbf{p}=(2,1), (1,1)$ and $(1,0)$. Here, $\mathbf{p}=(1,0)$ corresponds to the steady state for the case $\alpha=0$, i.e., the Euler case, described by Arnold \cite{AK98}.\end{remark}

\section{Some auxiliary results on continued fractions}\label{contfracresults}
In this section we collect several simple facts about continued fractions needed in Subsection \ref{ss-cf}. We follow the Appendix in \cite{FH98} and mention \cite{JT} as a general reference. Although the results are not new we have added some arguments not made explicit in \cite{FH98}.

Assume that $(c_{n})_{n\geq 1}$ is a sequence of positive numbers that has a positive limit. For $x>0$ we introduce the function
\begin{equation}\label{gdef}
G(x):=[xc_{1},xc_{2},\ldots]=\cfrac{1}{xc_1+\cfrac{1}{xc_2+\cfrac{1}{xc_3+\ddots}}}
\end{equation}
defined by means of a continued fraction. By changing $x$, when necessary, we can and will assume in what follows that $\lim_{k\to \infty}c_{k}=1$. We note that the continued fraction \eqref{gdef} converges, that is, the limit of the truncated continued fractions
\begin{equation*}
G^{(k)}(x)=  \cfrac{1}{xc_1+\cfrac{1}{xc_2+\cfrac{1}{xc_3+\cfrac{1}{\ddots+\cfrac{1}{xc_{k}}}}}}
\end{equation*}
exists and is positive, that is, $G(x)=\lim_{k \to \infty} G^{k}(x)$. This follows from the Van Vleck Theorem, see \cite[Theorem 4.29]{JT} since $\sum_{k=1}^{\infty}|xc_{k}|=\infty$ by the divergence test. Moreover, the proof of \cite[Theorem 4.29]{JT} based on the Stjeltjes-Vitali Theorem \cite[Theorem 4.30]{JT} yields that the function $G(\cdot)$ is holomorphic for $x \in \bbC$ satisfying $-\frac{\pi}{2}+\varepsilon < \arg(x) < \frac{\pi}{2}+\varepsilon$, for any $\varepsilon >0$.

In addition we will use the notations 
\begin{equation}\label{gndef}
G_{n}(x)=[xc_{n},xc_{n+1},\ldots]=\cfrac{1}{xc_n+\cfrac{1}{xc_{n+1}+\cfrac{1}{xc_{n+2}+\ddots}}}, \quad n=1,2,\ldots
\end{equation}
\begin{equation}\label{ginf}
G_{\infty}(x)=[x,x,\ldots]=\cfrac{1}{x+\cfrac{1}{x+\cfrac{1}{x+\ddots}}},
\end{equation}
and, given positive numbers $a,b>0$, we denote
\begin{equation}\label{f}
F:=F(a,b)=[a,b,a,b,\ldots]=\cfrac{1}{a+\cfrac{1}{b+\cfrac{1}{a+\cfrac{1}{b+\ddots}}}},
\end{equation}
the latter continued fractions also converge by the Van Vleck Theorem.
\begin{lemma}\label{cf}
Assume that $a,b>0, c_{k}>0, \lim_{k \to \infty}c_{k}=1$ and $x>0$. Then the following assertions hold:
\begin{enumerate}\label{lemitem}
\item[(1)] 
\begin{equation}\label{fab}
F(a,b)=\frac{\frac{b}{a}}{\sqrt{(\frac{b}{2})^{2}+\frac{b}{a}}+\frac{b}{2}}
\end{equation}
\item[(2)] If $0<A\leq c_{k} \leq B$ for $k=1,2,\ldots$, then
\begin{equation}\label{gineqab}
\frac{\frac{A}{B}}{\sqrt{(\frac{xA}{2})^{2}+\frac{A}{B}}+\frac{xA}{2}} \leq G(x) \leq \frac{\frac{B}{A}}{\sqrt{(\frac{xB}{2})^{2}+\frac{B}{A}}+\frac{xB}{2}}
\end{equation}
\item[(3)] The limit $\lim_{n\to \infty}G_{n}(x)$ exists and is equal to 
\begin{equation}\label{ginfval}
G_{\infty}(x)=\lim_{n\to \infty}G_{n}(x)=\sqrt{\bigg(\frac{x}{2}\bigg)^{2}+1}-\frac{x}{2}
\end{equation}
\item[(4)] 
\begin{equation}\label{g0}
\lim_{x\to 0^{+}}G(x)=1,
\end{equation}
\item[(5)]
\begin{equation}\label{infg}
\lim_{x\to +\infty}G(x)=0.
\end{equation}
\end{enumerate}
\end{lemma}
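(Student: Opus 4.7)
My plan is to prove the five items in turn, exploiting the self-similarity of $F(a,b)$ and the monotonicity of $G$ in the coefficients $c_k$. For (1), the doubly-periodic continued fraction $F(a,b)=[a,b,a,b,\ldots]$ satisfies the self-consistency relation $F = 1/(a + 1/(b + F))$ by reading off two levels. Clearing denominators reduces this to the quadratic $aF^2 + abF - b = 0$, whose unique positive root is $F = \sqrt{(b/2)^2 + b/a} - b/2$; rationalizing by multiplying numerator and denominator by $\sqrt{(b/2)^2 + b/a} + b/2$ produces exactly \eqref{fab}.

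For (2), observe that $G(x)$ is strictly decreasing in each $c_{2k-1}$ and strictly increasing in each $c_{2k}$, since successive inversions alternate the sense of monotonicity. Extremizing each $c_k \in [A,B]$ accordingly, the maximum of $G(x)$ is attained at $c_{2k-1} = A$, $c_{2k} = B$ and equals $F(xA, xB)$, while the minimum is attained at $c_{2k-1} = B$, $c_{2k} = A$ and equals $F(xB, xA)$. Substituting the closed form from (1) and rationalizing produces the bounds in \eqref{gineqab}. For (3), fix $\eta > 0$ and choose $n$ so large that $c_k \in [1-\eta, 1+\eta]$ for $k \geq n$; applying (2) to $G_n(x)$ with these local bounds sandwiches $G_n(x)$ between two quantities that both converge to $\sqrt{(x/2)^2+1} - x/2$ as $\eta \to 0$. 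The limit may alternatively be recognized as the positive root of the fixed-point equation $G_\infty = 1/(x + G_\infty)$, which is the natural $n\to\infty$ limit of the recursion $G_n = 1/(xc_n + G_{n+1})$. For (5), the upper bound in \eqref{gineqab} is dominated by $(B/A)/(xB) = 1/(Ax)$, which tends to $0$ as $x\to+\infty$, so $G(x)\to 0$ by squeezing.

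Item (4) will be the main obstacle, because a naive application of (2) to $G$ itself with global $A, B$ yields only $\sqrt{A/B} \leq \liminf_{x\to 0^+} G(x) \leq \limsup_{x\to 0^+} G(x) \leq \sqrt{B/A}$, which pins the limit to $1$ only when $A = B = 1$. The strategy will be to apply (2) to a tail rather than to $G$ itself: given $\eta > 0$, choose $N$ so that $|c_k - 1| < \eta$ for $k \geq N$, whence (2) applied to $G_N(x)$ places both $L_N := \liminf_{x\to 0^+} G_N(x)$ and $U_N := \limsup_{x\to 0^+} G_N(x)$ into the interval $I_\eta := [\sqrt{(1-\eta)/(1+\eta)}, \sqrt{(1+\eta)/(1-\eta)}]$. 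I will then propagate backwards through $G_{k-1}(x) = 1/(xc_{k-1} + G_k(x))$: since $xc_{k-1} \to 0$ and the map $t \mapsto 1/t$ is continuous and strictly decreasing on bounded intervals of positive reals, one obtains $L_{k-1} = 1/U_k$ and $U_{k-1} = 1/L_k$. The interval $I_\eta$ is invariant under $t \mapsto 1/t$, so $L_k, U_k \in I_\eta$ for every $k$ from $N$ down to $1$. Since $\eta > 0$ is arbitrary and $I_\eta \to \{1\}$ as $\eta \to 0$, this forces $L_1 = U_1 = 1$, proving $\lim_{x\to 0^+} G(x) = 1$.
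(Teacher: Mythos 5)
Your proposal is correct, and for items (1)--(3) it is essentially the paper's own argument: the self-consistency relation for $F(a,b)$ leading to $F^{2}+bF-\tfrac{b}{a}=0$ and \eqref{fab}, comparison of $G$ with the $2$-periodic fractions $F(xA,xB)$ and $F(xB,xA)$ for \eqref{gineqab}, and the tail estimate with $A=1-\delta$, $B=1+\delta$ for (3). The differences lie in the mechanics of (4) and (5). For (5) you dominate the upper bound in \eqref{gineqab} by $1/(Ax)$, after implicitly using that positivity plus $c_k\to1$ furnishes global constants $0<A\le c_k\le B$; this is a bit more direct than the paper, which instead goes through the tail representation \eqref{g1} and lets $x\to+\infty$ in a finite continued fraction. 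For (4) you correctly diagnose that the global bound only traps the limit in $[\sqrt{A/B},\sqrt{B/A}]$; the paper disposes of the head terms by writing $G(x)=[xc_1,\ldots,xc_{2n-1},G_{2n}(x)]$ with an \emph{even} number of slots, so that letting $x\to0^{+}$ collapses the finite fraction onto its last entry, whereas you push $\liminf$/$\limsup$ backwards one level at a time through $G_{k-1}(x)=1/(xc_{k-1}+G_k(x))$, using that the interval $I_\eta$ is invariant under $t\mapsto 1/t$. Both versions rest on the same two facts (the tail bound coming from (2), and the vanishing of the finitely many head terms as $x\to0^{+}$), so yours trades the parity trick for an explicit finite backward induction; either works. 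Two small points to tighten, neither a gap: the monotonicity of $G$ in each coefficient, and the resulting extremality of the $2$-periodic choices, should be justified on the truncations $G^{(k)}$ and then passed to the limit (this is exactly how the paper argues (2)); and in the backward step for (4) you should record that $L_k>0$ (guaranteed inductively by $L_k\in I_\eta$) and that the tail identity $G_{k-1}=1/(xc_{k-1}+G_k)$ is licensed by the convergence of the fractions (Van Vleck), so that the relations $U_{k-1}=1/L_k$, $L_{k-1}=1/U_k$ are legitimate.
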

\begin{proof}
(1) \sloppy The $k$-th truncated continued fraction for $F(a,b)$ are given by $F^{(2k)}(a,b)=\big[a,b,\ldots,a,b\big]$, $F^{2k+1}(a,b)=\big[a,b,\ldots,a\big]$ and satisfy
\begin{equation*}
F^{(k+2)}(a,b)=\frac{1}{a+\cfrac{1}{b+F^{(k)}(a,b)}}, \quad k=1,2,\ldots .
\end{equation*}
Since the continued fraction $[a,b,\ldots]$ converges, that is, $F^{(k)} \to F$ as $k \to \infty$, we conclude that 
\begin{equation*}
F(a,b)=\frac{1}{a+\cfrac{1}{b+F(a,b)}}, 
\end{equation*}
or $F^{2}(a,b)+bF(a,b)-\frac{b}{a}=0$, yielding \eqref{fab}.

(2) For each $k$-th truncated continued fraction $G^{(k)}(x)=\big[ xc_{1},\ldots,xc_{k}\big]$ we replace the odd-numbered $c_{j}$ by the smaller value $A$ and even-numbered $c_{j}$ by the larger value $B$. Thus, $G^{(k)}(x)$ is majorated by the $k$-th truncation $F^{(k)}(A,B)$ of $\big[A,B,A,B,\ldots\big]$. Passing to the limit as $k \to \infty$ and using (1) yields the second inequality in \eqref{gineqab}. The first inequality follows from $F^{(k)}(B,A) \leq G^{(k)}(x)$.

(3) Formula $G_{\infty}(x)= \sqrt{(\frac{x}{2})^{2}+1}-\frac{x}{2}$ follows from \eqref{fab} with $a=b=x$. It remains to show that the limit $\lim_{n \to \infty} G_{n}(x)$ exists and is equal to $G_{\infty}(x)$. For any $\delta \in (0,1)$ choose $N=N(\delta)$ such that for all $n \geq N$ we have $1-\delta < c_{n} < 1+\delta$. For any $n \geq N$ we apply assertion (2) with $c_{k}$ replaced with $c_{n+k}$, $k=1,2,\ldots$ and $A=1-\delta, B=1+\delta$. This yields
\begin{equation}\label{gnab}
A(x,\delta) \leq G_{n}(x) \leq B(x,\delta), \mbox{ for all } n \geq N,
\end{equation} 
where we introduce the notations
\begin{align}\label{axd}
A(x,\delta)&:= \frac{(1-\delta)/(1+\delta)}{\sqrt{(\frac{x(1-\delta)}{2})^{2}+\frac{1-\delta}{1+\delta}}+\frac{x(1-\delta)}{2}}, \nonumber \\  
B(x,\delta)&:= \frac{(1+\delta)/(1-\delta)}{\sqrt{(\frac{x(1+\delta)}{2})^{2}+\frac{1+\delta}{1-\delta}}+\frac{x(1+\delta)}{2}}. 
\end{align}
We note that $G_{\infty}(x)=\lim_{\delta \to 0} A(x,\delta)= \lim_{\delta \to 0} B(x,\delta), x>0$. For any $\varepsilon > 0$, we fix $\delta=\delta(\varepsilon) \in (0,1)$ such that 
\begin{equation*}
G_{\infty}(x)-\varepsilon < A(x,\delta), G_{\infty}(x)+\varepsilon > B(x,\delta).
\end{equation*}
Then \eqref{gnab} yields $|G_{\infty}(x)-G_{n}(x)| < \varepsilon$ for all $n \geq N(\delta(\varepsilon))$ as claimed.
 
(4) Pick a small $\delta > 0$ to be determined later and choose $N=N(\delta)$ such that \eqref{gnab} holds. Fix an even number $2n > N$ and notice that 
\begin{equation}\label{g1}
G(x)=G_{1}(x)=\big[xc_{1}, xc_{2},\ldots,xc_{2n-1},G_{2n}(x)\big] \leq \big[xc_{1}, xc_{2},\ldots,xc_{2n-1},B(x,\delta)\big],
\end{equation}
where we used that $G_{2n} \leq B(x,\delta)$ by \eqref{gnab}. Clearly, $\lim_{x \to 0} B(x,\delta)=\sqrt{\frac{1+\delta}{1-\delta}}$ yielding
\begin{equation*}
\limsup_{x \to 0} G(x) \leq \bigg[0,\ldots,0,\sqrt{\frac{1+\delta}{1-\delta}}\bigg]=\sqrt{\frac{1+\delta}{1-\delta}}.
\end{equation*}
A similar argument shows that $\liminf_{x \to 0} G(x) \geq \sqrt{\frac{1-\delta}{1+\delta}}$. Passing to the limit as $\delta \to 0$ proves (4).

(5) As before, we arrive at \eqref{g1} and notice that $\lim_{x \to +\infty} B(x,\delta)=0$ by \eqref{axd}. Then 
\begin{equation*}
\lim_{x \to +\infty} \big[xc_{1},xc_{2},\ldots,xc_{2n-1},B(x,\delta)\big]=0
\end{equation*}
yields (5).
\end{proof}

\section{The essential spectrum %of $L_{B}$ 
and the spectral mapping theorem}% for $L_{B}$}
In this section, we follow \cite{LLS} and prove for the linearized $\alpha$-Euler operator that the essential spectrum of the operator $L_{B}$ is the imaginary axis. We also prove the spectral mapping theorem for the group $\{e^{tL_{B}}\}_{t \in \bbR}$ generated by the operator $L_{B}$. 
%This section essentially follows the ideas in \cite{LLS} and proves the analogous results for the linearized $\alpha$-Euler operator. 

First note that $L_{B}$ is the direct sum of operators $L_{B,\bq}$, i.e., $L_B=\oplus_{\bq\in\cQ}L_{B,\bq}$, where $L_{B,\bq}$ is given by
\begin{equation}\label{lbqc}
L_{B,\bq}=(cS-\bar{c}S^*)\diag_{n\in\bbZ}\{1+\gamma_n\},
\end{equation}
with 
\beq\label{c}
c=\frac{\frac12\Gamma(\bq\wedge\bp)}{\|\bp\|^{2}(1+\alpha^{2}\|\bp\|^{2})},
\enq
and $\gamma_{n}$ given by \eqref{gammandef}. We note that in general, if $\Gamma \in \bbC$, then $c$ is a complex number. We thus write $c=|c|e^{i\theta}$ for some $\theta \in [0,2\pi)$. Equation \eqref{lbqc} then becomes,
\begin{equation*}
L_{B,\bq}=|c|(e^{i\theta}S-e^{-i\theta}S^*)\diag_{n\in\bbZ}\{1+\gamma_n\}.
\end{equation*}
\begin{lemma}\label{essspeclbq}
The essential spectrum of the operator $L_{B,\bq}$ is given by
\beq\label{esssplbq}
\sigma_{ess}(L_{B,\bq})=[-2i|c|,2i|c|].
\enq
\end{lemma}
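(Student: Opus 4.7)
The plan is to identify $L_{B,\bq}$ as a compact perturbation of the skew-adjoint constant-coefficient operator $|c|(e^{i\theta}S-e^{-i\theta}S^{*})$ and then compute the spectrum of the latter by Fourier transform on $\ell^{2}(\bbZ)$.

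First I would observe, directly from \eqref{gammandef}, that $\gamma_{n}\to 0$ as $|n|\to\infty$, since $\|\bq+n\bp\|\to\infty$. Hence the diagonal operator $D:=\diag_{n\in\bbZ}\{\gamma_{n}\}$ on $\ell^{2}(\bbZ)$ is the norm-limit of finite-rank truncations, and therefore compact. Writing
\[
L_{B,\bq}=|c|(e^{i\theta}S-e^{-i\theta}S^{*})\bigl(I+D\bigr)
=|c|(e^{i\theta}S-e^{-i\theta}S^{*})+|c|(e^{i\theta}S-e^{-i\theta}S^{*})D,
\]
and noting that $S,S^{*}$ are bounded so that the second summand is compact, I conclude that $L_{B,\bq}$ and $T_{\theta}:=|c|(e^{i\theta}S-e^{-i\theta}S^{*})$ differ by a compact operator. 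By Weyl's theorem on the stability of the essential spectrum under compact perturbations, $\sigma_{ess}(L_{B,\bq})=\sigma_{ess}(T_{\theta})$.

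Next I would diagonalize $T_{\theta}$ via the Fourier isomorphism $U:\ell^{2}(\bbZ)\to L^{2}([0,2\pi),\tfrac{d\xi}{2\pi})$, $(w_{n})\mapsto\sum_{n\in\bbZ}w_{n}e^{in\xi}$. Under $U$, the shift $S:(w_{n})\mapsto(w_{n-1})$ becomes multiplication by $e^{i\xi}$ and $S^{*}$ becomes multiplication by $e^{-i\xi}$. Therefore $UT_{\theta}U^{-1}$ is multiplication by the symbol
\[
m(\xi)=|c|\bigl(e^{i\theta}e^{i\xi}-e^{-i\theta}e^{-i\xi}\bigr)=2i|c|\sin(\xi+\theta).
\]
The spectrum of a multiplication operator by a continuous function on the circle is the essential range of that function, and since Lebesgue measure on $[0,2\pi)$ is nonatomic, this spectrum is purely essential. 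Thus
\[
\sigma_{ess}(T_{\theta})=\sigma(T_{\theta})=\operatorname{Ran}(m)=\bigl\{2i|c|\sin(\xi+\theta):\xi\in[0,2\pi)\bigr\}=[-2i|c|,2i|c|],
\]
which yields \eqref{esssplbq}.

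There is no substantial obstacle: the only point that requires a small verification is the compactness of $D$ (which follows routinely from $\gamma_{n}\to 0$ together with the fact that diagonal operators on $\ell^{2}$ with entries vanishing at infinity are compact) and the observation that the multiplication operator by $m(\xi)$ has no point spectrum, so essential and full spectra coincide. The rest is a clean application of Weyl's stability theorem and the standard Fourier symbol calculus for Laurent/Toeplitz operators on $\ell^{2}(\bbZ)$.
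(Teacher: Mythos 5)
Your argument is correct and follows essentially the same route as the paper: write $L_{B,\bq}$ as a compact perturbation of $|c|(e^{i\theta}S-e^{-i\theta}S^{*})$ (compactness coming from $\gamma_{n}\to 0$), apply Weyl's theorem, and identify the unperturbed operator via the Fourier isomorphism with multiplication by a symbol whose range is $[-2i|c|,2i|c|]$. The only cosmetic difference is that you write the symbol explicitly as $2i|c|\sin(\xi+\theta)$, while the paper phrases it as the closure of the range of $e^{i\theta}z-e^{-i\theta}\bar{z}$ on the unit circle.
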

\begin{proof}
We observe that the Fourier transform $\mathbb F: L^{2}(\bbT) \to \ell^{2}(\bbZ):f \mapsto (w_{n})_{n \in \mathbb Z}$ is an isometric isomorphism, where $\mathbb F^{-1}:\ell^{2}(\bbZ) \to L^{2}(\bbT) $ is given by $(w_{n}) \mapsto \sum_{n \in \bbZ}w_{n}e^{inz}$ for $z \in \bbT:=\{z \in \bbC:|z|=1\}$. The operator $e^{i\theta}S-e^{-i\theta}S^*$ acting on $\ell^{2}(\bbZ)$ is similar via $\mathbb F$ to the operator of multiplication by $e^{i\theta}z-e^{-i\theta}\bar{z}$ acting on $L^{2}(\bbT)$, where $z \in \bbT$. That is, \[\mathbb F^{-1}(e^{i\theta}S-e^{-i\theta}S^*)\mathbb F = e^{i\theta}z-e^{-i\theta}\bar{z}.\]
The above equality follows from the observation that \[\mathbb F^{-1}S=z\mathbb F^{-1} \mbox{ and } \mathbb F^{-1}S^{*}=\bar{z}\mathbb F^{-1}.\]
We now use the fact that the spectrum of a multiplication operator on $L^{2}(\bbT)$ is equal to its essential spectrum and is given by the closure of the range of the multiplier. In other words, the spectrum of the operator of multiplication by $e^{i\theta}z-e^{-i\theta}\bar{z}$ on $L^{2}(\bbT)$ is the closure of the range of $e^{i\theta}z-e^{-i\theta}\bar{z}$ as $z \in \bbT$. But this is equal to $[-2i,2i]$. We thus conclude that the essential spectrum of the operator $|c|(e^{i\theta}S-e^{-i\theta}S^*)$ is $[-2i|c|,2i|c|]$. Now, notice that the operator $L_{B,q}$ is a compact perturbation of the operator $|c|(e^{i\theta}S-e^{-i\theta}S^*)$ by the operator $|c|(e^{i\theta}S-e^{-i\theta}S^*)\diag_{n\in\bbZ}\{\gamma_n\}$.  Here, the operator $|c|(e^{i\theta}S-e^{-i\theta}S^*)\diag_{n\in\bbZ}\{\gamma_n\}$ is compact because $|\gamma_{n}| \to 0$ as $|n| \to \infty$. Weyl's theorem \cite[Lemma XIII.4.3]{RS78} allows us to conclude that the essential spectrum of $L_{B,q}$ is the same as the essential spectrum of $|c|(e^{i\theta}S-e^{-i\theta}S^*)$. Thus \eqref{esssplbq} holds. 
\end{proof}
We now prove that the spectrum of $L_{B}$ is exactly the union of the spectra of $L_{B,\bq}$ cf. \cite{LLS}.
\begin{proposition}\label{spectrumlb}
$\sigma(L_{B})=\bigcup_{\bq \in Q}\sigma(L_{B,\bq})$.
\end{proposition}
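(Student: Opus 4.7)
The plan is to establish both inclusions using the invariant-subspace decomposition $\ell^2(\bbZ^2) = \oplus_{\bq \in \cQ} X_{B,\bq}$ with $L_B|_{X_{B,\bq}} = L_{B,\bq}$. The easy direction $\bigcup_\bq \sigma(L_{B,\bq}) \subseteq \sigma(L_B)$ follows because any approximate eigenvector for $L_{B,\bq_0}$ (or for its adjoint) in $X_{B,\bq_0}$ extends by zero to one for $L_B$ in the ambient space, so $\sigma(L_{B,\bq_0}) \subseteq \sigma(L_B)$ for each $\bq_0 \in \cQ$.

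For the reverse inclusion, fix $\lambda \notin \bigcup_\bq \sigma(L_{B,\bq})$. The blockwise candidate inverse $R_\lambda = \oplus_\bq (L_{B,\bq} - \lambda)^{-1}$ is a bounded two-sided inverse to $L_B - \lambda$ on $\ell^2(\bbZ^2)$ precisely when $\sup_\bq \|(L_{B,\bq} - \lambda)^{-1}\| < \infty$, which is what I must establish. Lemma \ref{essspeclbq} together with the unboundedness of $|c_\bq| = |\Gamma|\,|\bq \wedge \bp|/[2\|\bp\|^2(1+\alpha^2\|\bp\|^2)]$ over $\cQ$ gives $\bigcup_\bq \sigma_{ess}(L_{B,\bq}) \supseteq i\bbR$, forcing $\re(\lambda) \neq 0$. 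Split $\cQ = \cQ_{\le R} \sqcup \cQ_{>R}$ with $R$ sufficiently large (in particular $R > \|\bp\|$) that every $\bq \in \cQ_{>R}$ is of type $0$ (since types I and II require $\|\hat\bq\| \leq \|\bp\|$) and that $D_\bq = \diag(\delta_n)$ is invertible; the finite set $\cQ_{\le R}$ contributes only finitely many finite resolvent norms.

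For $\bq \in \cQ_{>R}$, introduce $\widetilde M_\bq = D_\bq (cS - \bar c S^*) D_\bq$, which is skew-Hermitian since $\delta_n = \sqrt{1+\gamma_n} \in \bbR$ by type $0$; hence $\|(\widetilde M_\bq - \lambda)^{-1}\| \leq 1/|\re(\lambda)|$. The algebraic identity $L_{B,\bq} = D_\bq^{-1} \widetilde M_\bq D_\bq$ then gives $\|(L_{B,\bq} - \lambda)^{-1}\| \leq \|D_\bq^{-1}\|\,\|D_\bq\|/|\re(\lambda)|$. The key geometric estimate, from $\|\bq\| = \min_n \|\bq + n\bp\|$ for $\bq \in \cQ$ together with formula \eqref{gammandef}, is $\sup_n |\gamma_n| = |\gamma_0| \leq \|\bp\|^2(1+\alpha^2\|\bp\|^2)/[\|\bq\|^2(1+\alpha^2\|\bq\|^2)] \to 0$ as $\|\bq\| \to \infty$, so $D_\bq \to I$ in norm with uniformly bounded condition number over $\cQ_{>R}$. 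Combining the finite and cofinite parts then yields the desired uniform resolvent bound, whence $\lambda \in \rho(L_B)$. I expect the main technical obstacle to be precisely this uniform condition-number estimate for the diagonal similarity transforms, which rests entirely on the geometric decay of $\gamma_n$ just described.
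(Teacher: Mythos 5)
Your proposal is correct, and while its overall skeleton matches the paper's (handle the finitely many small-norm blocks trivially, get a uniform resolvent bound on the large-norm blocks off the imaginary axis, and use that the union of essential spectra $[-2i|c|,2i|c|]$ covers $i\bbR$ so that $\re\lambda\neq 0$), the mechanism behind the key uniform estimate is genuinely different. The paper splits $L_B=L^s\oplus L^b$ along $\|\bq\|\le\|\bp\|$ versus $\|\bq\|>\|\bp\|$, proves $\sigma(L^b)\subseteq i\bbR$, and then uses $i\bbR\subseteq\bigcup_{\|\bq\|>\|\bp\|}\sigma(L_{B,\bq})$; its uniform bound on $\|(\lambda-|c|N_{\bq})^{-1}\|$ comes from a Neumann-series perturbation of the skew-adjoint operator $N_\bq^0$, which needs the finer decay estimate $|c|\sup_n|\gamma_n|\le K(\bp)/\big(\|\bq\|(1+\alpha^2\|\bq\|^2)\big)$, proved via the cancellation $\bq\wedge\bp=(\bq+n\bp)\wedge\bp$ and Cauchy--Schwarz. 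You instead conjugate each large-$\|\bq\|$ block exactly, $L_{B,\bq}=D_\bq^{-1}\widetilde M_\bq D_\bq$ with $\widetilde M_\bq=D_\bq(cS-\bar cS^*)D_\bq$ skew-adjoint (legitimate because for $\bq\in\cQ_{>R}$ all $\delta_n$ are real and, since $R>\|\bp\|$, nonzero), so that $\|(L_{B,\bq}-\lambda)^{-1}\|\le\|D_\bq\|\,\|D_\bq^{-1}\|/|\re\lambda|$, and the condition number is controlled by the elementary observation $\sup_n|\gamma_n|=|\gamma_0|\le\|\bp\|^2(1+\alpha^2\|\bp\|^2)/\big(R^2(1+\alpha^2R^2)\big)<1$ on $\cQ_{>R}$; no wedge-product estimate is needed, only that $|\gamma_0|$ stays bounded away from $1$, which is a weaker input than the paper's. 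You also argue directly at each $\lambda\notin\bigcup_\bq\sigma(L_{B,\bq})$ rather than through the intermediate statement $\sigma(L^b)\subseteq i\bbR$; the two are logically equivalent here precisely because the essential spectra exhaust $i\bbR$. What the paper's perturbative route buys is the explicit bound $2/|\re\lambda|$ in \eqref{qbineq}, uniform in $\Im\lambda$, which is recycled in the proof of the spectral mapping theorem (Proposition \ref{spmap}); your bound $\|D_\bq\|\,\|D_\bq^{-1}\|/|\re\lambda|$ is likewise independent of $\Im\lambda$ on the large blocks and would serve that purpose equally well, while being somewhat more elementary.
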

\begin{proof}
Since $\bigcup_{\bq \in Q}\sigma(L_{B,\bq}) \subset \sigma(L_{B})$ trivially holds, it is enough to show that \[\sigma(L_{B}) \subset  \bigcup_{\bq \in Q}\sigma(L_{B,\bq}).\]
We first split the operator $L_{B}=L^{s}+L^{b}$, where $L^{s}=\oplus_{\|\bq\| \leq \|\bp\|}L_{B,\bq}$ and $L^{b}=\oplus_{\|\bq\| > \|\bp\|}L_{B,\bq}$ correspond to $\bq$ with small and big norms. We have that $\sigma(L_{B})=\sigma(L^{s}) \cup \sigma(L^{b})$, and since $L^{s}$ is the sum of finitely many operators we have that \[\sigma(L_{B})= \bigg(\bigcup_{\|\bq\| \leq \|\bp\|} \sigma(L_{B,\bq})\bigg) \bigcup \sigma(L^{b}).\]
It is thus enough to show that $\sigma(L^{b}) \subset \bigcup_{\|\bq\| > \|\bp\|} \sigma(L_{B,\bq})$. Since $|c| \to \infty$ as $\|\bq\| \to \infty$ (see \eqref{c}), and using the fact that $\sigma_{ess}(L_{B,\bq})=[-2i|c|,2i|c|]$, we see that $i \bbR \subset \bigcup_{\|\bq\| > \|\bp\|} \sigma(L_{B,\bq})$. It therefore suffices to show that $\sigma(L^{b}) \subset i\bbR$. Let us denote \[N_{\bq}^{0}=(e^{i\theta}S-e^{-i\theta}S^*)\]
and \[N_{\bq}=(e^{i\theta}S-e^{-i\theta}S^*)\diag_{n\in\bbZ}\{1+\gamma_n\}.\] Thus $N_{\bq}=N_{\bq}^{0}\diag_{n\in\bbZ}\{1+\gamma_n\}$ and 
$ L_{B,\bq}=|c|N_{\bq}$, i.e., \[ L^{b}=\oplus_{\|\bq\| > \|\bp\|}|c|N_{\bq}.\] 
In order to show that $\sigma(L^{b}) \subset i\bbR$ we show that if $\lambda \notin i \bbR$, then $\lambda $ is in the resolvent set of $L^{b}$. Thus, to prove the proposition,  we need to show that 
\begin{equation}\label{mainclaimlambda}
\mbox{ if} \quad \lambda \notin i\bbR, \quad \mbox{ then} \quad \sup_{\|\bq\| > \|\bp\|}\|\lambda-|c|N_{\bq} \|^{-1} < + \infty. 
\end{equation}
Notice that \[ (\lambda-|c|N_{\bq})^{-1}=\frac{1}{|c|}\bigg(\frac{\lambda}{|c|}-N_{\bq}\bigg)^{-1}.\] 

Notice that $(N_{\bq}^{0})^{*}=-N_{\bq}^{0}$, i.e., $N_{\bq}^{0}$ is a bounded skew self-adjoint operator with $\|N_{\bq}^{0}\| =2$. It's spectrum lies along the imaginary axis and since $\lambda \notin i\bbR$ we have that,
\begin{equation}\label{resnq0}
\bigg\| \bigg(\frac{\lambda}{|c|}-N_{\bq}^{0}\bigg)^{-1}\bigg\|  = \frac{|c|}{|Re(\lambda)|}.
\end{equation}
Also 
\begin{align}
\frac{\lambda}{|c|}-N_{\bq} &= \frac{\lambda}{|c|}-N_{\bq}^{0}-N_{\bq}^{0}\diag_{n\in\bbZ}\{\gamma_n\} \nonumber \\
& = \bigg(\frac{\lambda}{|c|}-N_{\bq}^{0}\bigg)\bigg[ I- \bigg(\frac{\lambda}{|c|}-N_{\bq}^{0}\bigg)^{-1}N_{\bq}^{0}\diag_{n\in\bbZ}\{\gamma_n\} \bigg].
\end{align} 

\textit{Claim:} $|c| \|\diag_{n\in\bbZ}\{\gamma_n\} \| \leq \frac{K (\bp)}{\|\bq\|(1+\alpha^{2}\|\bq\|^{2})}$, where $K (\bp)> 0$ is a constant.  

\noindent \textit{Proof of Claim:} \quad Using the definition of $\gamma_{n}$ (see \eqref{gammandef}) and $c$ (see \eqref{c}) we have, 
\[ |c\gamma_{n}| = \frac{|\Gamma| |\bq \wedge \bp|}{2\|\bq+n\bp\|^{2}(1+\alpha^{2}\|\bq+n \bp\|^{2})}.\]
Now use the fact that $\bq \wedge \bp = (\bq+n\bp) \wedge \bp$ and the fact that $|\bq \wedge \bp|=|\bq \cdot \bp^{\perp}|$ and the Cauchy-Schwarz inequality to see that $ |\bq \wedge \bp| = |(\bq +n\bp) \wedge \bp| \leq \|\bq +n\bp\| \|\bp \| $. This then implies that, 
\[ |c\gamma_{n}| \leq \frac{K(\bp)}{\|\bq+n\bp\|(1+\alpha^{2}\|\bq+n\bp\|^{2})}.\]
We thus have that,
\[ |c| \|\diag_{n\in\bbZ}\{\gamma_n\} \| \leq |c| \sup_{n} |\gamma_{n}| \leq \frac{K(\bp)}{\|\bq\|(1+\alpha^{2}\|\bq\|^{2})},\]
which finishes the proof of the Claim. 

Now choose $\|\bq_{0}\| > \|\bp\|$ so that for all $\|\bq\| \geq \|\bq_{0}\|$, the inequality
\begin{equation}\label{ccd}
\frac{2K(\bp)}{|Re(\lambda)\|\bq\|(1+\alpha^{2}\|\bq\|^{2})|} \leq \frac{1}{2}
\end{equation}
holds. We stress that $\bq_{0}$ depends on $Re(\lambda)$ but does not depend on $Im(\lambda)$. Denote $Q_{s}:=\{ \bq \in Q: \|\bq\| \in [\|\bp\|,\|\bq_{0}\|]\}$ and $Q_{b}:=\{ \bq \in Q: \|\bq\| \geq \|\bq_{0}\|\}$. If $\bq \in Q_{b}$, using \eqref{ccd}, and the fact that $\|N_{\bq}^{0}\| =2$, we have,
\begin{align*} 
\bigg\|\bigg(\frac{\lambda}{|c|}-N_{\bq}^{0}\bigg)^{-1}N_{\bq}^{0}\diag_{n\in\bbZ}\{\gamma_n\}\bigg\| &\leq \frac{2|c|\|\diag_{n\in\bbZ}\{\gamma_n\}\|}{|Re(\lambda)|} \\&\leq \frac{2K(\bp)}{|Re(\lambda)\|\bq\|(1+\alpha^{2}\|\bq\|^{2})|} \leq \frac{1}{2}.
\end{align*}
This proves that as long as $\bq \in Q_{b}$, the operator $\bigg[ I- \bigg(\frac{\lambda}{|c|}-N_{\bq}^{0}\bigg)^{-1}N_{\bq}^{0}\diag_{n\in\bbZ}\{\gamma_n\} \bigg]$ is invertible and 
\[ \bigg\| \bigg[ I- \bigg(\frac{\lambda}{|c|}-N_{\bq}^{0}\bigg)^{-1}N_{\bq}^{0}\diag_{n\in\bbZ}\{\gamma_n\} \bigg]\bigg\| \leq 2.\]
Therefore, as long as $\bq \in Q_{b}$, we have that 
\begin{align*}
 \|(\lambda-|c|N_{\bq})^{-1}\|=\frac{1}{|c|}\bigg\|\bigg(\frac{\lambda}{|c|}-N_{\bq}\bigg)^{-1}\bigg\| \leq \frac{1}{|c|}\frac{|c|}{|Re(\lambda)|}2 = \frac{2}{|Re(\lambda)|}.
\end{align*}
Thus, 

\begin{equation}\label{qbineq}
 \sup_{\bq \in Q_{b}} \|(\lambda-|c|N_{\bq})^{-1}\| \leq \frac{2}{|Re(\lambda)|}.
\end{equation}

To finish the proof, we note that the set $Q_{s}$ is finite and since $(\lambda-|c|N_{\bq})^{-1}$ is a bounded linear operator for every $\bq \in Q_{s}$, it follows that $\oplus_{\bq \in Q_{s}}\|(\lambda-|c|N_{\bq})^{-1}\| $ is also a bounded linear operator, where, if $\lambda=Re(\lambda)+iIm(\lambda)$, with $Re(\lambda) \neq 0$, then the resolvent operator grows as $O(1/(|Im(\lambda)|)$ as $|Im(\lambda)| \to \infty$. We have that,
\begin{equation}\label{qsineq}
 \sup_{\bq \in Q_{s}} \|(\lambda-|c|N_{\bq})^{-1}\| < + \infty.
\end{equation}
Since $\{\bq:\|\bq\| > \|\bp\|\}=Q_{s} \cup Q_{b}$, equations \eqref{qbineq}, \eqref{qsineq} show that \eqref{mainclaimlambda} holds. This proves the proposition.
\end{proof}

\begin{proposition}\label{essentialspectrumlb}
$\sigma_{ess}(L_{B})=i\bbR$ and $\sigma_{p}(L_{B}) \backslash i \bbR = \bigcup_{\|\bq\| \leq \|\bp\|} (\sigma_{p}(L_{B,\bq})\backslash i \bbR)$ is a bounded set with accumulation points only on $i\bbR$.
\end{proposition}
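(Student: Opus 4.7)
The plan is to read off both statements from the decomposition $L_B=\oplus_{\bq\in\cQ}L_{B,\bq}$, combined with Lemma \ref{essspeclbq} and Proposition \ref{spectrumlb}. First I would establish $\sigma_{ess}(L_B)\supseteq i\bbR$. Since each $L_{B,\bq}$ is the restriction of $L_B$ to an invariant direct summand $X_{B,\bq}$, any Weyl sequence for $L_{B,\bq}$ extended by zero produces a Weyl sequence for $L_B$, so $\sigma_{ess}(L_{B,\bq})\subseteq\sigma_{ess}(L_B)$ for every $\bq$. Lemma \ref{essspeclbq} gives $\sigma_{ess}(L_{B,\bq})=[-2i|c|,2i|c|]$ with $|c|=|c(\bq)|\to\infty$ as $\|\bq\|\to\infty$ by \eqref{c}; taking the union and using that the essential spectrum is closed yields $i\bbR=\overline{\bigcup_{\bq\in\cQ}[-2i|c|,2i|c|]}\subseteq\sigma_{ess}(L_B)$.

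For the reverse inclusion $\sigma_{ess}(L_B)\cap(\bbC\setminus i\bbR)=\emptyset$, I would use that the proof of Proposition \ref{spectrumlb} already shows $\sigma(L^b)\subseteq i\bbR$, where $L^b=\oplus_{\|\bq\|>\|\bp\|}L_{B,\bq}$ (the type-$0$ points, for which $M_\bq$ is skew-adjoint). Hence
\begin{equation*}
\sigma(L_B)\setminus i\bbR=\bigcup_{\|\bq\|\leq\|\bp\|}\sigma(L_{B,\bq})\setminus i\bbR,
\end{equation*}
and the set $\{\bq\in\cQ:\|\bq\|\leq\|\bp\|\}$ is finite. For each such $\bq$, $\sigma_{ess}(L_{B,\bq})\subseteq i\bbR$ by Lemma \ref{essspeclbq}, so every point of $\sigma(L_{B,\bq})\setminus i\bbR$ is an isolated eigenvalue of finite algebraic multiplicity (the discrete spectrum of $L_{B,\bq}$). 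Each such $\lambda$ is therefore isolated in the full spectrum $\sigma(L_B)=\bigcup_\bq\sigma(L_{B,\bq})$ (the finitely many other summands with $\|\bq'\|\leq\|\bp\|$ contribute only finitely many discrete eigenvalues near any such $\lambda$, and the tail contributes only points of $i\bbR$), and its generalized eigenspace is a finite direct sum of finite-dimensional generalized eigenspaces, hence finite-dimensional. Thus $\lambda\in\sigma_{disc}(L_B)$, which gives $\sigma_{ess}(L_B)\subseteq i\bbR$.

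The identity $\sigma_p(L_B)\setminus i\bbR=\bigcup_{\|\bq\|\leq\|\bp\|}(\sigma_p(L_{B,\bq})\setminus i\bbR)$ is now immediate: the left side equals $\sigma(L_B)\setminus i\bbR$ (every such point is discrete, as just shown), the right side equals $\sigma(L_{B,\bq})\setminus i\bbR$ summand-by-summand for the same reason, and the large-$\bq$ summands contribute nothing. Boundedness follows because $L_{B,\bq}$ is a bounded operator: the sequence $\rho_n=1+\gamma_n$ in \eqref{rhonexp} is uniformly bounded in $n$, and $S-S^*$ has norm $2$, so $\|L_{B,\bq}\|$ is finite for each $\bq$, and the indexing set $\{\bq:\|\bq\|\leq\|\bp\|\}$ is finite. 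For the accumulation claim, each $L_{B,\bq}$ is a compact perturbation of $c(S-S^*)$ (via $L_{B,\bq}=(S-S^*)\diag\{1+\gamma_n\}$ with $\gamma_n\to0$), so by the standard Riesz--Schauder/Weyl theory any accumulation point of $\sigma_p(L_{B,\bq})\setminus\sigma_{ess}(L_{B,\bq})$ lies in $\sigma_{ess}(L_{B,\bq})\subseteq i\bbR$; a finite union of such sets inherits this property.

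The main delicate point I expect is justifying that the essential spectrum of the infinite direct sum $L_B$ equals the closure of the union of essential spectra of the summands, given that the family $\{\|L_{B,\bq}\|\}_\bq$ is unbounded. The $\supseteq$ direction is the easy Weyl-sequence argument above; the $\subseteq$ direction is the content of the resolvent estimate in Proposition \ref{spectrumlb}, and I would explicitly invoke that estimate to conclude that the tail $L^b$ contributes nothing to $\sigma(L_B)\setminus i\bbR$, reducing the problem to the finite direct sum over $\|\bq\|\leq\|\bp\|$ where the standard additivity of essential and discrete spectra applies.
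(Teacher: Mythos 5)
Your proposal is correct and follows essentially the same route as the paper: the inclusion $i\bbR\subseteq\sigma_{ess}(L_B)$ from Lemma \ref{essspeclbq} together with $|c|\to\infty$, and the reverse inclusion from the splitting $L_B=L^s\oplus L^b$ with $\sigma(L^b)\subseteq i\bbR$ supplied by the resolvent estimate in the proof of Proposition \ref{spectrumlb}, the finitely many small-$\bq$ summands then handled by Weyl/Riesz--Schauder theory. Your only real deviation is that you justify $\sigma_{ess}(L_B)\subseteq i\bbR$ by showing points off $i\bbR$ are discrete eigenvalues of $L_B$, where the paper instead asserts additivity of the essential spectrum over the finite splitting; this is a presentational refinement, not a different method.
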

\begin{proof}
The facts that $|c| \to \infty$ as $\|\bq\| \to \infty$ and \eqref{esssplbq}, together with the fact that $\bigcup_{\bq \in Q}\sigma_{ess}(L_{B,\bq}) \subset \sigma_{ess}(L_{B})$ imply that $i\bbR \subset \sigma_{ess}(L_{B})$. It is thus enough to prove that $\sigma_{ess}(L_{B}) \subset i\bbR$.
We have, \[\sigma_{ess}(L_{B})=\bigcup_{\|\bq\| \leq \|\bp\|} \sigma_{ess}(L_{B,\bq}) \bigcup \sigma_{ess}(L^{b}).\] 
Notice that, since $\oplus_{\|\bq\| \leq \|\bp\|} (L_{B,\bq})$ is a sum of finitely many bounded linear operators and using \eqref{esssplbq}, we have that \[ \bigcup_{\|\bq\| \leq \|\bp\|} \sigma_{ess}(L_{B,\bq}) \subset i \bbR.\]
From the proof of Proposition \ref{spectrumlb}, see Equation \eqref{mainclaimlambda}, we know that $\sigma(L^{b}) \subset i\bbR$, i.e., $L^{b}$ does not have points in the spectrum with non zero imaginary values. Thus, \[\sigma_{ess}(L^{b}) \subset \sigma(L^{b}) \subset i\bbR.\] This proves that $\sigma_{ess}(L_{B}) \subset i \bbR$. The second statement of the Proposition follows from the above and from the fact that $\bigcup_{\|\bq\| \leq \|\bp\|} L_{B,\bq}$ is a finite sum of bounded linear operators. 
%We know that $L^{b}$ does not have points in the spectrum with non zero imaginary values and the facts that $|c| \to \infty$ as $\|\bq\| \to \infty$ and \eqref{esssplbq} imply that $i\bbR \subset \sigma_{ess}(L_{B})$.
\end{proof}
We now prove the spectral mapping theorem for the operator $L_{B}$.
\begin{proposition}\label{spmap}
The spectral mapping property,
\[ \sigma(e^{tL_{B}})=e^{t\sigma(L_{B})}, \quad t \neq 0,\]
holds for the operator $L_{B}$.
\end{proposition}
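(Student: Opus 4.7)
The plan is to reduce the spectral mapping identity for $L_B$ to the corresponding identity for each component $L_{B,\bq}$ in the decomposition $L_B=\bigoplus_{\bq\in\cQ}L_{B,\bq}$ of Proposition \ref{spectrumlb}, where the standard holomorphic functional calculus applies because each $L_{B,\bq}$ is bounded. The principal subtlety is that $L_B$ itself is unbounded (since $\|L_{B,\bq}\|$ grows linearly in $\|\bq\|$ because $|c|$ does), so spectral mapping cannot be invoked for $L_B$ directly; instead the group $e^{tL_B}$ must be assembled component-wise, and both the direct-sum spectrum and the direct-sum exponential must be controlled uniformly in $\bq$.

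First I would split $L_B=L^s\oplus L^b$ as in Proposition \ref{spectrumlb}, with $L^s$ the finite sum over $\|\bq\|\le\|\bp\|$ and $L^b$ the remainder. For the bounded operator $L^s$ the classical holomorphic functional calculus immediately yields $\sigma(e^{tL^s})=e^{t\sigma(L^s)}$. To handle $L^b$, I would rewrite each $L_{B,\bq}$ as $|c|N_\bq^0+R_\bq$, where $|c|N_\bq^0=|c|(e^{i\theta}S-e^{-i\theta}S^*)$ is skew-adjoint (and hence exponentiates to a unitary group) and $R_\bq=|c|N_\bq^0\diag_{n\in\bbZ}\{\gamma_n\}$ is a bounded perturbation satisfying $\|R_\bq\|\le 2K(\bp)/(\|\bq\|(1+\alpha^2\|\bq\|^2))$ by the Claim in the proof of Proposition \ref{spectrumlb}. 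The Duhamel bound $\|e^{tL_{B,\bq}}\|\le e^{|t|\|R_\bq\|}$ then gives $\sup_{\bq\in Q_b}\|e^{tL_{B,\bq}}\|<\infty$ for each fixed $t$, so $e^{tL^b}:=\bigoplus_{\bq\in Q_b}e^{tL_{B,\bq}}$ is a well-defined bounded operator and $e^{tL_B}=e^{tL^s}\oplus e^{tL^b}$ is the $C_0$-group generated by $L_B$.

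Next I would establish $\sigma(e^{tL^b})=e^{t\sigma(L^b)}=S^1$, the unit circle. For each bounded $L_{B,\bq}$ one has $\sigma(e^{tL_{B,\bq}})=e^{t\sigma(L_{B,\bq})}\subset S^1$ by the standard calculus, together with the inclusion $\bigcup_{\bq}\sigma(e^{tL_{B,\bq}})\subset\sigma(e^{tL^b})$. The reverse inclusion $\sigma(e^{tL^b})\subset S^1$ follows by a resolvent estimate analogous to the one used in the proof of Proposition \ref{spectrumlb}: since $e^{tL_{B,\bq}}=e^{t|c|N_\bq^0}+V_\bq$ with $\|V_\bq\|\le |t|e^{|t|\|R_\bq\|}\|R_\bq\|\to 0$ as $\|\bq\|\to\infty$ (by Duhamel's formula applied to the perturbation $R_\bq$), a Neumann-series argument against the unitary $e^{t|c|N_\bq^0}$ gives $\sup_{\bq\in Q_b}\|(\lambda-e^{tL_{B,\bq}})^{-1}\|<\infty$ whenever $\operatorname{dist}(\lambda,S^1)>0$, hence $\lambda\in\rho(e^{tL^b})$. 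Since the essential spectra $[-2i|c|,2i|c|]$ exhaust $i\bbR$ as $\|\bq\|\to\infty$, their exponentials exhaust $S^1$, forcing $\sigma(e^{tL^b})=S^1=e^{ti\bbR}=e^{t\sigma(L^b)}$. Combining with Step 1 and Proposition \ref{spectrumlb} yields $\sigma(e^{tL_B})=e^{t\sigma(L^s)}\cup S^1=e^{t\sigma(L_B)}$.

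The main obstacle will be the uniform-in-$\bq$ resolvent bound for $(\lambda-e^{tL_{B,\bq}})^{-1}$ when $\lambda\notin S^1$; for a non-normal family one cannot simply read off resolvent norms from distance to the spectrum, and the argument relies crucially on the fact that each $e^{tL_{B,\bq}}$ becomes an arbitrarily small perturbation of the unitary $e^{t|c|N_\bq^0}$ as $\|\bq\|\to\infty$, which is the exponential analogue of the key estimate powering the proof of Proposition \ref{spectrumlb}.
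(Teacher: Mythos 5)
Your argument is correct, but it follows a genuinely different route from the paper. The paper never exponentiates blockwise: it invokes the Gearhart--Pruss spectral mapping theorem on Hilbert space (cited from \cite{LLS}), which characterizes $\sigma(e^{tL_B})$ through the resolvents $R(\lambda+2\pi i n/t,L_B)$, and then feeds into it the uniform resolvent estimates \eqref{qbineq}--\eqref{qsineq} from the proof of Proposition \ref{spectrumlb} to get \eqref{resolvest}, i.e.\ boundedness of $\|(\lambda-L_B)^{-1}\|$ along each vertical line off $i\bbR$; combined with spectral mapping for the point spectrum and the inclusion $e^{t\sigma_{ess}(L_B)}\subset\sigma(e^{tL_B})$ this gives the result. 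You instead work directly with the propagator: the Dunford calculus on the bounded block $L^s$ and on each bounded $L_{B,\bq}$, plus the Duhamel estimate showing that $e^{tL_{B,\bq}}$ differs from the unitary $e^{t|c|N^0_\bq}$ by $V_\bq$ with $\|V_\bq\|\le |t|\,\|R_\bq\|e^{|t|\|R_\bq\|}\to 0$, so that a Neumann series yields the uniform bound on $(\lambda-e^{tL_{B,\bq}})^{-1}$ away from the unit circle. This is more elementary (no Gearhart--Pruss needed) and makes explicit that both proofs run on the same smallness estimate, namely the Claim $|c|\sup_n|\gamma_n|\le K(\bp)/(\|\bq\|(1+\alpha^2\|\bq\|^2))$, used by the paper at the resolvent level and by you at the propagator level. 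Two small points to tighten: (i) for fixed $\lambda$ with $d=\operatorname{dist}(\lambda,\{|z|=1\})>0$, the Neumann series only covers those $\bq$ with $\|V_\bq\|<d$, i.e.\ all but finitely many $\bq$ with $\|\bq\|>\|\bp\|$; for the finitely many remaining blocks you must still note that $\lambda\in\rho(e^{tL_{B,\bq}})$, which holds because such $\bq$ are of type $0$, so $M_\bq$ is skew-adjoint, $\sigma(L_{B,\bq})\subset i\bbR$ and hence $\sigma(e^{tL_{B,\bq}})=e^{t\sigma(L_{B,\bq})}$ lies on the unit circle --- the same device the paper uses for the finite set $Q_s$; (ii) you should record that $\sigma(L_B)=\sigma(L^s)\cup\sigma(L^b)$ with $\sigma(L^b)=i\bbR$ (both contained in the proof of Proposition \ref{spectrumlb}) and that the blockwise-assembled, locally bounded family $e^{tL^s}\oplus\bigoplus_{\bq}e^{tL_{B,\bq}}$ is indeed the group generated by $L_B$, so that the final identification $\sigma(e^{tL_B})=e^{t\sigma(L^s)}\cup e^{t i\bbR}=e^{t\sigma(L_B)}$ is legitimate. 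With these remarks added, your proof is complete and self-contained.
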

\begin{proof}
\sloppy
We know from Proposition \ref{essentialspectrumlb}, that the essential spectrum of $L_{B}$ satisfies $\sigma_{ess}(L_{B})=i\bbR$. This tells us that $e^{t \sigma_{ess}(L_{B})}=e^{i\bbR}=\{z \in \bbC:|z|=1\}$. Since $e^{t \sigma_{ess}(L_{B})} \subseteq \sigma(e^{tL_{B}})$ for any semigroup, we see that $\{z \in \bbC:|z|=1\} \subseteq \sigma(e^{tL_{B}})$. We want to show that $ \sigma_{ess}(e^{tL_{B}}) \subset \{z \in \bbC:|z|=1\}$. We use a general Gearhart-Pruss spectral mapping theorem for Hilbert spaces, see \cite[Th.2, p.268]{LLS}. On a Hilbert space, $\sigma(e^{tL_{B}})$, $t \neq 0$, is the set of points $e^{\lambda t}$ such that either $\mu_{n}=\lambda+2\pi n / t$ belongs to $\sigma(L_{B})$ for all $n \in \bbZ$ or the sequence $\{\|R(\mu_{n},L_{B})\|\}_{n \in \bbZ}$ is unbounded. Suppose $\sigma_{ess}(e^{tL_{B}}) \not\subset \{z \in \bbC:|z|=1\}$. Then, there exists $e^{t\lambda}$ such that $\lambda \notin i\bbR$ and either $\mu_{n}=\lambda+2\pi n / t \in \sigma_{ess}(L_{B})$ for all $n \in \bbZ$ or the sequence $\{\|R(\mu_{n},L_{B})\|\}_{n \in \bbZ}$ is unbounded. The first outcome is precluded by the fact that $\sigma_{ess}(L_{B})=i\bbR$. So if $e^{t\lambda} \notin \{z \in \bbC:|z|=1\}$ and $e^{t\lambda} \in \sigma_{ess}(e^{tL_{B}})$ then we must have that $\sup_{y \in \bbR}\|R(Re(\lambda)+iy,L_{B})\| = +\infty$. But this is impossible because, as we prove below that for each $\lambda \notin i \bbR$, $\sup_{y \in \bbR}\|R(Re(\lambda)+iy,L_{B})\| < + \infty$. So it remains to establish the following fact. 

Claim: Assume $\{Re(\lambda)+iy:y \in \bbR\} \cap \sigma(L_{B}) = \varnothing, Re(\lambda) > 0$, then $\sup_{y \in \bbR}\|R(Re(\lambda)+iy,L_{B})\| < \infty$.

Let $\lambda \notin i\bbR$ as in the proof of Proposition \ref{spectrumlb} and fix $Re(\lambda)$. Since $Q_{s}$ is a finite set, the operator $\|R(\lambda,\oplus_{\bq \in Q_{s}}L_{B,\bq})\|$ is a bounded linear operator such that the norm of its resolvent decays as $O(1/(|Im(\lambda)|)$ as $|Im(\lambda)| \to \infty$ and \eqref{qsineq} holds, i.e., one has $\|R(\lambda,\oplus_{\bq \in Q_{s}}L_{B,\bq})\| \leq C$. One also has that if $\bq \in Q_{b}$, then \eqref{qbineq} holds, i.e., the norm of the resolvent operator $\|R(\lambda,\oplus_{\bq \in Q_{b}}L_{B,\bq})\| \leq C/ |Im(\lambda)|$. These two facts above can be combined to give 
\begin{equation}\label{resolvest}
\| (\lambda-L_{B})^{-1}\| =O(1) \quad \mbox{as} \quad |Im(\lambda)| \to \infty.
\end{equation}
By estimate \eqref{resolvest}, we know that if $Re(\lambda) \neq 0$, then $e^{t\lambda}$ is not in the spectrum of $e^{tL_{B}}$. This shows that the essential spectrum of $e^{tL_{B}}$, $\sigma_{ess}(e^{tL_{B}})$, is contained in the unit circle. One also knows that the spectral mapping property always holds for the point spectrum. One can combine these facts to obtain the result.
%One can then use a general Gearhart-Pruss spectral mapping theorem for Hilbert spaces to conclude the result, see \cite[Th.2, pg 268]{LLS}. On a Hilbert space, $\sigma(e^{tL_{B}})$, $t \neq 0$, is the set of points $e^{\lambda t}$ such that either $\mu_{n}=\lambda+2\pi n / t$ belongs to $\sigma(L_{B})$ or the sequence $\{\|R(\mu_{n},L_{B})\|\}_{n \in \bbZ}$ is unbounded. 
\end{proof}

\section{Concluding comments}

The main result of the present paper, Theorem \ref{instability}, states that for steady states (which are a function of the vector $\mathbf p$), that have a point $\mathbf q$ of type $I$, that is, the set $\{\mathbf q +n \mathbf p: n \in \mathbb Z\}$ has one point inside the open disc of radius $\mathbf p$, (see Subsection \ref{B-S}, Remark \ref{classf}  for a precise definition of point of type I) are linearly unstable. The existence of an unstable eigenvalue is equivalent to the existence of a positive root to an equation involving continued fractions (equations \eqref{eqio}, \eqref{eqip}, \eqref{eqim}, respectively, for points of type $I_{0}$, $I_{+}$ and $I_{-}$). We are able to provide a list of additional properties that the respective eigenvectors satisfy. In Section 4, we also characterized the essential spectrum of the operator $L_{B}$ and proved a spectral mapping theorem for the group generated by $L_{B}$. Moving forward, proving linear instability, as done in the current paper, can be seen as a first step to prove full nonlinear instability. In \cite{GHS07}, the authors are able to characterize the nonlinear growth rate of the solution in terms of the largest real eigenvalue of the linearized operator. This allows them to study the nonlinear instability of interfacial fluid motions, examples of which include vortex sheets with surface tension and Hele-Shaw flows. In \cite{FV11}, the authors prove nonlinear instability and ill-posedness of the magneto-geostrophic equations by first proving that the linearized operator has an unstable eigenvalue using continued fractions techniques. Similarly, in \cite{FGSV12}, the authors study the ill-posedness of a nonlinear singular porous media equation by studying first the instability of the linearized operator using continued fractions. In \cite{KS14}, the authors show that the gradient of the vorticity to the 2D nonlinear Euler equation has double exponential growth rate. A related question is to show single exponential growth in the case of the 2D $\alpha$-Euler equations. 

\section{Acknowledgements}
We thank the referees for their valuable comments which improved the exposition of our paper. 

\section{Appendix}
The purpose of this Appendix is to collect some proofs of results used in the main body of the text.
\begin{lemma}\label{eqfmode}
Equation \eqref{vortalpha} holds if and only if $\omega_{\bk}$ satisfies equation \eqref{DEEalpha} for every $\bk \neq 0$.
\end{lemma}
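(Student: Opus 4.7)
\medskip

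\noindent\textbf{Proof plan.} The plan is to pass to Fourier series on both sides of the transport equation \eqref{vortalpha} and then reorganize the resulting convolution sum using the antisymmetry of the wedge product, so that the symmetric form of the coefficient $\beta$ in \eqref{dfnalpha} emerges. The converse direction is obtained by reading the same chain of identities in reverse, using uniqueness of Fourier expansions.

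First I would substitute the Fourier expansions $\omega=\sum_\bk \omega_\bk e^{i\bk\cdot\bx}$ and $\phi=\sum_\bk \phi_\bk e^{i\bk\cdot\bx}$ with $\phi_\bk$ determined by \eqref{ophif}. Since $\bv=-\nabla^\perp \phi$ one has the Fourier expansion $\bv(\bx)=\sum_\bq i(q_2,-q_1)\phi_\bq e^{i\bq\cdot\bx}$, and the condition $\nabla\cdot \bv=0$ is automatic because $(q_2,-q_1)\cdot \bq=0$. Next, computing $\bv\cdot \nabla \omega$ as the product of two Fourier series and collecting terms with frequency $\bk$, one finds
\begin{equation*}
(\bv\cdot\nabla\omega)_\bk=\sum_{\bq\in\bbZ^2}(\bq\wedge\bk)\,\phi_\bq\,\omega_{\bk-\bq}
=\sum_{\bq\in\bbZ^2\setminus\{0\}}\bigl(\bq\wedge(\bk-\bq)\bigr)\|\bq\|^{-2}(1+\alpha^2\|\bq\|^2)^{-1}\omega_\bq\,\omega_{\bk-\bq},
\end{equation*}
using $\bq\wedge\bk=\bq\wedge(\bk-\bq)$ and dropping the $\bq=0$ term (for which $\bq\wedge\bk=0$).

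Equating Fourier coefficients in $\partial_t\omega+\bv\cdot\nabla\omega=0$ gives an identity for $d\omega_\bk/dt$ which, at this stage, is not yet symmetric in $(\bq,\bk-\bq)$. The key step is symmetrization: making the substitution $\bq\mapsto \bk-\bq$ in the summation yields a second representation of the same sum, and averaging the two representations produces exactly half the difference of the two reciprocal factors $\|\bq\|^{-2}(1+\alpha^2\|\bq\|^2)^{-1}$ and $\|\bk-\bq\|^{-2}(1+\alpha^2\|\bk-\bq\|^2)^{-1}$, multiplied by $(\bk-\bq)\wedge\bq$. Comparing with \eqref{dfnalpha} one recognizes $\beta(\bk-\bq,\bq)$, giving \eqref{DEEalpha}.

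For the converse implication, the same manipulations run backward: given that the coefficients $\omega_\bk$ satisfy \eqref{DEEalpha} for every $\bk\ne 0$, one reconstructs $\omega(\bx)$ and $\phi(\bx)$ via Fourier series, uses $\bv=-\nabla^\perp\phi$ to define a divergence-free velocity, and verifies that the Fourier coefficients of $\partial_t\omega+\bv\cdot\nabla\omega$ all vanish. The one nontrivial issue to watch is that the reindexing and the symmetrization are purely algebraic, so the bookkeeping of signs coming from $\bp\wedge\bq=-\bq\wedge\bp$ and from splitting $\bq\wedge\bk=\bq\wedge(\bk-\bq)$ must be done consistently; this is the main place where a sign error would be easy to make, but otherwise no analytic obstacle appears, since at the formal Fourier level the computation is an identity of formal series, and under the regularity hypotheses implicit in the choice of $H^s(\bbT^2)$ all the series converge absolutely.
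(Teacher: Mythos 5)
Your proposal is correct and follows essentially the same route as the paper: expand in Fourier series, compute the nonlinear term as a convolution, and symmetrize by swapping the convolution index $\bq\mapsto\bk-\bq$ and averaging, which is exactly how the paper produces the antisymmetric coefficient $\beta(\bk-\bq,\bq)$ (its two convolution identities correspond to your two representations). The signs you flag as delicate are in fact handled correctly in your displayed computation.
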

\begin{proof}
Using the facts that $v_1=\frac{\partial\phi}{\partial y}$ and $v_2=-\frac{\partial\phi}{\partial x}$, one can rewrite equation \eqref{vortalpha} as 
\beq\label{EEphi}
\frac{\partial\omega}{\partial t}=-\frac{\partial\phi}{\partial y}\frac{\partial\omega}{\partial x}+\frac{\partial\phi}{\partial x}\frac{\partial\omega}{\partial y}.
\enq
Using \eqref{ophif}, we see that, 
\[ \frac{\partial \phi}{\partial x} = \sum_{\bq\in\bbZ^2\setminus\{0\}}  \frac{ik_{1}\omega_{\bk}e^{i\bk\cdot\bx}}{||\bk||^{2}(1+\alpha^{2}||\bk||^{2})}, \quad \frac{\partial \phi}{\partial y} = \sum_{\bq\in\bbZ^2\setminus\{0\}}  \frac{ik_{2}\omega_{\bk}e^{i\bk\cdot\bx}}{||\bk||^{2}(1+\alpha^{2}||\bk||^{2})}.\]
%$\o=\Delta\psi$ and $\psi(\bx)=-\sum_{\bk\in\bbZ^2\setminus\{0\}}\|\bk\|^{-2}\o_\bk e^{i\bk\cdot\bx}$. 
Equation \eqref{EEphi}  then reads, in terms of the Fourier series,
\beq\label{EEfs}\begin{split}
\frac{\partial\omega}{\partial t}&=-\bigg( \sum_{\bk\in\bbZ^2\setminus\{0\}}\frac{ik_{2}\omega_{\bk}}{||\bk||^{2}(1+\alpha^{2}||\bk||^{2})}  e^{i\bk\cdot\bx}\bigg) \bigg(\sum_{\bk\in\bbZ^2\setminus\{0\}} ik_1\omega_\bk e^{i\bk\cdot\bx}\bigg)\\&+\bigg( \sum_{\bk\in\bbZ^2\setminus\{0\}}\frac{ik_{1}\omega_{\bk}}{||\bk||^{2}(1+\alpha^{2}||\bk||^{2})} e^{i\bk\cdot\bx}\bigg) \bigg(\sum_{\bk\in\bbZ^2\setminus\{0\}} ik_2\omega_\bk e^{i\bk\cdot\bx}\bigg).
\end{split}\enq
%\beq\label{EEfs}\begin{split}
%\frac{\partial\omega}{\partial t}&=-\bigg( \sum_{\bk\in\bbZ^2\setminus\{0\}}\frac{ik_{2}\omega_{\bk}}{||\bk||^{2}(1+\alpha^{2}||\bk||^{2})}\omega_\bk e^{i\bk\cdot\bx}\bigg) \bigg(\sum_{\bk\in\bbZ^2\setminus\{0\}} ik_1\omega_\bk e^{i\bk\cdot\bx}\bigg)\\&+\bigg( \sum_{\bk\in\bbZ^2\setminus\{0\}}\frac{ik_{1}\omega_{\bk}}{||\bk||^{2}(1+\alpha^{2}||\bk||^{2})}\omega_\bk e^{i\bk\cdot\bx}\bigg) \bigg(\sum_{\bk\in\bbZ^2\setminus\{0\}} ik_2\omega_\bk e^{i\bk\cdot\bx}\bigg).
%\end{split}\enq
%where we notice the sign change due to the presence of the term $ik_1$ and $ik_2$ in the respective factors.
Using the identity
\[\big(\sum_\bn a_\bn e^{i\bn\cdot\bx}\big)\big(\sum_\bl b_\bl e^{i\bl \cdot\bx}\big)
=\sum_\bk\big(\sum_{\bq}
a_{\bq} b_{\bk-\bq} e^{i\bk\cdot\bx}\big)
%=\sum_\bk
%\big(\sum_{\bq}
%a_{\bk-\bq} b_\bq e^{i\bk\cdot\bx}\big)
\]
first for  $a_\bn=n_2\|\bn\|^{-2}(1+\alpha^{2}\|\bn\|^{2})^{-1}\omega_\bn$, $b_\bl=l_1\omega_\bl$ and then for  $a_\bn=n_1\|\bn\|^{-2}(1+\alpha^{2}\|\bn\|^{2})^{-1}\omega_\bn$, $b_\bl=l_2\omega_\bl$, equation \eqref{EEfs} is seen to be
\beq\lb{A1}
\frac{\partial\omega}{\partial t}=  \sum_{\bk\in\bbZ^2\setminus\{0\}} \sum_{\bq\in\bbZ^2\setminus\{0\}} \frac{q_{2}(k_{1}-q_{1})-q_{1}(k_{2}-q_{2})}{\|\bq\|^{2}(1+\alpha^{2}\|\bq\|^{2})}\omega_{\bk-\bq}\omega_\bq e^{i\bk\cdot\bx}.
\enq

Alternatively, using the identity
\[\big(\sum_\bn a_\bn e^{i\bn\cdot\bx}\big)\big(\sum_\bl b_\bl e^{i\bl \cdot\bx}\big)
=\sum_\bk\big(\sum_{\bq}
a_{\bk-\bq} b_{\bq} e^{i\bk\cdot\bx}\big)
%=\sum_\bk
%\big(\sum_{\bq}
%a_{\bk-\bq} b_\bq e^{i\bk\cdot\bx}\big)
\]
first for  $a_\bn=n_2\|\bn\|^{-2}(1+\alpha^{2}\|\bn\|^{2})^{-1}\omega_\bn$, $b_\bl=l_1\omega_\bl$ and then for  $a_\bn=n_1\|\bn\|^{-2}(1+\alpha^{2}\|\bn\|^{2})^{-1}\omega_\bn$, $b_\bl=l_2\omega_\bl$, equation \eqref{EEfs} is seen to be
\beq\lb{A2}
\frac{\partial\omega}{\partial t}=  \sum_{\bk\in\bbZ^2\setminus\{0\}} \sum_{\bq\in\bbZ^2\setminus\{0\}} \frac{q_{1}(k_{2}-q_{2})-q_{2}(k_{1}-q_{1})}{\|\bk-\bq\|^{2}(1+\alpha^{2}\|\bk-\bq\|^{2})}\omega_{\bk-\bq}\omega_\bq e^{i\bk\cdot\bx}.
\enq
Noticing that $\frac{\partial\omega}{\partial t} = \sum_{\bk\in\bbZ^2\setminus\{0\}} \frac{d \omega_{\bk}}{ dt} e^{i\bk\cdot\bx}$ and taking the average of \eqref{A1} and \eqref{A2} we obtain that \eqref{DEEalpha} for each mode $\omega_{\bk}$ of $\omega$ holds if and only if \eqref{vortalpha} holds.
\end{proof}
We now prove that the unidirectional flow given by \eqref{dfnBS} and \eqref{steadystuni} is a steady state.
\begin{lemma}\label{steadyst}
A unidirectional flow given by the vorticity equations \eqref{dfnBS} and \eqref{steadystuni}
is a steady state solution of the $\alpha$-Euler equation \eqref{vortalpha} on the torus $\bbT^{2}$.
\end{lemma}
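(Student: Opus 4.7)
The plan is to verify directly that substituting the ansatz \eqref{steadystuni} into the mode equations \eqref{DEEalpha} yields $d\omega_{\mathbf k}^{0}/dt = 0$ for every $\mathbf k \in \mathbb Z^{2}\setminus\{0\}$. By Lemma \ref{eqfmode} this is equivalent to the vorticity form \eqref{vortalpha}, so it suffices to work with the Fourier-side equations. Since $\omega^{0}_{\mathbf k}$ is time-independent, the left-hand side is already zero; the work lies entirely in checking that the right-hand side
\[
\sum_{\mathbf q \in \mathbb Z^{2}\setminus\{0\}} \beta(\mathbf k - \mathbf q, \mathbf q)\, \omega^{0}_{\mathbf k - \mathbf q}\, \omega^{0}_{\mathbf q}
\]
vanishes identically in $\mathbf k$.

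The key observation is that $\omega^{0}_{\mathbf j}$ is nonzero only when $\mathbf j \in \{\mathbf p, -\mathbf p\}$. Thus a term in the convolution sum can contribute only when both $\mathbf q \in \{\pm \mathbf p\}$ and $\mathbf k - \mathbf q \in \{\pm \mathbf p\}$, which restricts $\mathbf k$ to one of the three values $0,\ 2\mathbf p,\ -2\mathbf p$. For every other $\mathbf k$ the sum is empty, giving zero trivially.

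It remains to inspect these three exceptional modes and show that $\beta(\mathbf k - \mathbf q, \mathbf q)$ vanishes in each surviving pair, using the definition \eqref{dfnalpha}. For $\mathbf k = 2\mathbf p$ the only contributing pair is $(\mathbf k - \mathbf q, \mathbf q) = (\mathbf p, \mathbf p)$, and for $\mathbf k = -2\mathbf p$ it is $(-\mathbf p, -\mathbf p)$; in both cases the wedge factor $(\mathbf k - \mathbf q)\wedge \mathbf q$ equals $\mathbf p \wedge \mathbf p = 0$, so $\beta = 0$. For $\mathbf k = 0$ the contributing pairs are $(\mathbf p, -\mathbf p)$ and $(-\mathbf p, \mathbf p)$; here the wedge is nonzero, but the bracketed factor in \eqref{dfnalpha} is
\[
\|\mathbf q\|^{-2}(1+\alpha^{2}\|\mathbf q\|^{2})^{-1} - \|\mathbf k - \mathbf q\|^{-2}(1+\alpha^{2}\|\mathbf k - \mathbf q\|^{2})^{-1},
\]
which vanishes because $\|\mathbf p\| = \|-\mathbf p\|$. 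Combining all cases shows the right-hand side of \eqref{DEEalpha} is zero for every $\mathbf k$, and thus the unidirectional flow is a steady state.

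There is no real obstacle here: the proof is a short bookkeeping argument exploiting the antisymmetry of the wedge on parallel vectors together with the symmetry $\|\mathbf p\| = \|-\mathbf p\|$. The only care needed is to enumerate the nonempty convolution pairs correctly and to invoke Lemma \ref{eqfmode} to translate between the PDE form \eqref{vortalpha} and the Fourier-mode form \eqref{DEEalpha}.
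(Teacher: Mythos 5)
Your proof is correct and takes essentially the same route as the paper: substitute \eqref{steadystuni} into the mode equations \eqref{DEEalpha}, note that only pairs with $\bq,\bk-\bq\in\{\pm\bp\}$ survive, and check that each corresponding coefficient $\beta$ vanishes. One small slip worth noting: for the pairs $(\bp,-\bp)$ and $(-\bp,\bp)$ the wedge $\bp\wedge(-\bp)$ is in fact zero (the vectors are parallel), not nonzero as you state, but your alternative justification via $\|\bp\|=\|-\bp\|$ also annihilates those terms (and the mode $\bk=0$ is excluded from \eqref{DEEalpha} anyway), so the argument stands.
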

%Proof of Lemma \ref{steadyst}.
\begin{proof}
For every $\bk \neq 0$ one needs to check that the right hand side of \eqref{DEEalpha} is zero, where the Fourier coefficients of $\omega^{0}_{\bk}$ are given by \eqref{steadystuni}. Since $\omega^{0}_{\bq}$ is nonzero only when $\bq =\pm \bp$, the right hand side of\eqref{steadystuni} reduces to 
\[ \beta(\bk-\bp,\bp)\omega^{0}_{\bk-\bp}\omega^{0}_{\bp}+ \beta(\bk+\bp,\bp)\omega^{0}_{\bk+\bp}\omega^{0}_{\bp}.\]
Now using the fact that $\omega^{0}_{\bk-\bp}$ is nonzero only when $\bk-\bp=\pm \bp$ and $\omega^{0}_{\bk+\bp}$ is nonzero only when $\bk+\bp=\pm \bp$ and using \eqref{steadystuni}, the above equation reduces to 
\[ \frac{1}{4}\big(\beta(\bp,\bp)\Gamma^{2} + \beta(-\bp,\bp)\overline{\Gamma} \Gamma + \beta(\bp,-\bp) \Gamma \overline{\Gamma} + \beta(-\bp,-\bp) {\overline{\Gamma}}^{2}\big) , \] which is zero because $\beta(\bp,\pm \bp)=0$ and $\beta(\pm \bp, \bp)=-\beta(\bp,\pm \bp)$. 
\end{proof}

Derivation of Equation \eqref{dfnLB}:

We briefly indicate how to obtain equation \eqref{dfnLB}. Linearizing the right hand side of \eqref{DEEalpha} about the steady state \eqref{dfnBS} reduces the right hand side of  \eqref{DEEalpha} to 
\begin{equation}\label{lin}
\sum_{\bq\in\bbZ^2\setminus\{0\}}\beta(\bk-\bq,\bq)\omega^{0}_{\bk-\bq}\omega_\bq + \sum_{\bq\in\bbZ^2\setminus\{0\}}\beta(\bk-\bq,\bq)\omega_{\bk-\bq}\omega^{0}_\bq,
\end{equation}
where in the first sum, $\omega^{0}_{\bk-\bq}=\Gamma /2 $ if $\bk-\bq=\bp$, i.e., if $\bq=\bk-\bp$ and $\omega^{0}_{\bk-\bq}=\bar{\Gamma} /2 $ if $\bk-\bq=-\bp$, i.e., if $\bq=\bk+\bp$ and zero otherwise and in the second sum, $\omega^{0}_{\bq}=\Gamma/2$ if $\bq=\bp$ and $\omega^{0}_{\bq}=\bar{\Gamma}/2$ if $\bq=-\bp$ and zero otherwise. Using these in \eqref{lin}, we see that \eqref{lin} reduces to,
\begin{equation*}
\beta(\bp,\bk-\bp)\frac{\Gamma}{2}\omega_{\bk-\bp}+ \beta(-\bp,\bk+\bp)\frac{\bar{\Gamma}}{2}\omega_{\bk+\bp} + \beta(\bk-\bp,\bp)\frac{\Gamma}{2}\omega_{\bk-\bp} + \beta(\bk+\bp,-\bp)\frac{\bar{\Gamma}}{2}\omega_{\bk+\bp}.
\end{equation*}
Now use the facts that if $\bp \neq \bq$, then $\beta(\bp,\bq)=\beta(\bq,\bp)$ and $\beta(-\bp,\bq)=-\beta(\bp,\bq)$ in the above equation to get \eqref{dfnLB}.\newline

We now give the proof of Lemma \ref{symev}.

\begin{proof}
Recall \eqref{dfnrho} and the assumption that $\Gamma \in \bbR$. Note that $L_{B,\bq}=(S-S^*)\diag_{n\in\bbZ}\{\rho_n\}$, where \[(S-S^{*})^{*}=S^{*}-S = -(S-S^{*}).\] We thus have that, 
\begin{align*} 
\sigma(L_{B,\bq}^{*})\backslash \{0\} &= \sigma (\rho_{n}(S-S^{*})^{*}) \backslash \{0\}= -\sigma (\rho_{n}(S-S^{*})) \backslash \{0\} \\&= -\sigma ((S-S^{*})\rho_{n}) \backslash \{0\} = - \sigma(L_{B,\bq}) \backslash \{0\}. 
\end{align*}
Thus $\sigma(L_{B,\bq}) \backslash \{0\} = \overline{\sigma(L_{B,\bq}^{*})} \backslash \{0\}=-\overline{\sigma(L_{B,\bq})} \backslash \{0\}$. Thus the eigenvalues are symmetric about the imaginary axes. 

The fact that the eigenvalues are symmetric about the real axes can be proved as follows. The fact that if $\lambda$ is an eigenvalue then $\overline{\lambda}$ is also an eigenvalue is a consequence of the fact that $\overline{L_{B,\bq} \bv} = L_{B,\bq} \overline{\bv}$ for any $\bv \in \ell^{2}(\bbZ)$. From this it follows that if $\lambda$ is an eigenvalue with eigenvector $\bv$, then $\overline{\lambda}$ is an eigenvalue with eigenvector $\overline{\bv}$. This proves the Lemma. 

Additionally, one can also prove the fact that if $\lambda$ is an eigenvalue then $-\lambda$ is also an eigenvalue. Let $\hat{J}$ be an operator on $\ell^{2}(\bbZ)$ defined by $(\o_{n}) \mapsto ((-1)^{n}\o_{n})$ and notice that $\hat{J}S=-S\hat{J}$ and $\hat{J}S^{*}=-S^{*}\hat{J}$ and $\hat{J}^{2}=I$. Thus, 
\[ \hat{J}L_{B,\bq} \hat{J} = \hat{J} ((S-S^{*})\diag_{n\in\bbZ}\{\rho_n\}) \hat{J}=-L_{B,\bq}.\] Thus,
\[ 
\sigma(L_{B,\bq}) = \sigma(L_{B,\bq}\hat{J}\hat{J})=\sigma(\hat{J}L_{B,\bq}\hat{J})=-\sigma(L_{B,\bq}),\]
which concludes the proof. We used Lemma \ref{spectrumcommute} in the last part of the proof.
\end{proof}

%%%%%%%%%%%%%%%%%%%%%
%%%%%%%%%%%%%%%%%%%%%%%%%%%%

%%%%%%%%%%%%%%%%%%%%%%%%
%%%%%%%%%%%%%%%%%%%%%%%%%%%%

%%%%%%%%%%%%%%%%%%%%%%%%

\end{document}